\numberwithin{equation}{section}
\newcommand{\lap}{\Delta}
\newcommand{\mollif}{\eta}
\newcommand{\delOm}{\partial\Omega}
\newcommand{\flap}{\ensuremath{(-\Delta)^s}}
\DeclareMathOperator{\halfflap}{\ensuremath{(-\Delta)^{s/2}}}
\newcommand{\R}{\ensuremath{\mathbb{R}}}
\newcommand{\Rd}{\ensuremath{{\R^d}}}
\newcommand{\Zd}{\ensuremath{{\Z^d}}}
\newcommand{\I}{\ensuremath{\mathcal{I}}}
\newcommand{\csob}{C_{\text{Sob.}}}
\newcommand{\cbound}{C_{\text{Bndry.}}}
\newcommand{\capprox}{C_{\text{Approx.}}}
\newcommand{\ci}{C_{\text{(i)}}}
\newcommand{\cii}{C_{\text{(ii)}}}
\newcommand{\ciii}{C_{\text{(iii)}}}
\newcommand{\N}{\ensuremath{\mathbb{N}}}
\newcommand{\Z}{\ensuremath{\mathbb{Z}}}
\newcommand{\V}{\ensuremath{\mathbb{V}}}
\newcommand{\eps}{\varepsilon}
\newcommand{\dx}{\mathop{\mathrm{d}x}}
\newcommand{\dt}{\mathop{\mathrm{d}t}}
\newcommand{\dr}{\mathop{\mathrm{d}r}}
\newcommand{\dy}{\mathop{\mathrm{d}y}}
\newcommand{\doxi}{\mathop{\mathrm{d}o}(\xi)}
\newcommand{\dom}{\mathop{\mathrm{d}\omega}}
\renewcommand{\i}{\mathrm{i}}
\newcommand{\e}{\mathrm{e}}
\newcommand{\abs}[1]{\ensuremath{\left|{#1}\right|}}
\newcommand{\norm}[1]{\ensuremath{\left\|{#1}\right\|}}
\newcommand{\Hs}[1]{\ensuremath{\left|{#1}\right|_{H^s(\R^d)}}}
\newcommand{\Hsnorm}[1]{\ensuremath{\left\|{#1}\right\|_{H^s(\R^d)}}}
\newcommand{\tildeHsOm}{\ensuremath{\widetilde H^{s}(\Omega)}}
\newcommand{\tildeHtOm}{\ensuremath{\widetilde H^{t}(\Omega)}}
\newcommand{\Hms}[1]{\ensuremath{\norm{#1}_{H^{-s}(\R^d)}}}
\newcommand{\Ht}[1]{\ensuremath{|{#1}|_{H^t(\R^d)}}}
\newcommand{\Lt}[1]{\ensuremath{\left\|{#1}\right\|_{L^2(\R^d)}}}
\newcommand{\resOm}[1]{\ensuremath{{#1}\big|_{\Omega_h}}}
\newcommand{\Poincare}{Poincar\'e}
\DeclareMathOperator{\supp}{supp}
\DeclareMathOperator{\dist}{dist}
\DeclareMathOperator{\sinc}{sinc}
\DeclareMathOperator{\FT}{\mathcal F}
\DeclareMathOperator{\IFT}{\mathcal F^{-1}}
\DeclareMathOperator{\PiN}{\Pi_N}
\DeclareMathOperator{\Ns}{\mathcal N_s}
\definecolor{dk_blue}{rgb}{.0,.0,.7}
\newtheorem{theorem}{Theorem}[section]
\newtheorem{lemma}{Lemma}[section]
\newtheorem{proposition}{Proposition}[section]
\newtheorem{assumption}{Assumption}[section]
\newtheorem{remark}{Remark}[section]
\title{Analysis of a $\sinc$-Galerkin Method for the Fractional Laplacian}
\thanks{
HA and LS are partially supported by NSF grants DMS-2110263, DMS-1913004, the Air
Force Office of Scientific Research under Award NO: FA9550-22-1-0248.
LS gratefully acknowledges a doctoral scholarship from the Friedrich-Ebert-Stiftung.
}
\author{Harbir Antil$^\ast$}
\address[$^\ast$]{Department of Mathematical Sciences and the Center for Mathematics and Artificial Intelligence (CMAI), George Mason University, Fairfax, VA 22030, USA.}
\email{hantil@gmu.edu}
\author{Patrick Dondl$^\dagger$}
\author{Ludwig Striet$^\dagger$}
\address[$\dagger$]{Abteilung für Angewandte Mathematik, Albert-Ludwigs-Universität Freiburg, Hermann-Herder-Straße 10, 79104 Freiburg i. Br. }
\email{patrick.dondl@mathematik.uni-freiburg.de}
\email{ludwig.striet@mathematik.uni-freiburg.de}
\begin{document}
\maketitle

\begin{abstract}
	We provide the convergence analysis for a $\sinc$-Galerkin method to solve
	the fractional Dirichlet problem. This can be understood as a follow-up of \cite{ADS2021}, 
	where the authors presented a $\sinc$-function based method to solve fractional
	PDEs. While the original method was formulated as a collocation method, we
	show that the same method can be interpreted as a nonconforming Galerkin method, 
	giving access to abstract error estimates. Optimal order of convergence 
	is shown without any unrealistic regularity assumptions on the solution.
\end{abstract}

\section{Introduction}
Motivated by a number of applications, we aim to solve the fractional
Dirichlet problem, i.e., find $u$ in an appropriate space such that
\begin{equation}
	\label{eq:bvp}
	\flap u = f\text{ in }\Omega,\quad u \equiv 0\text{ in }\R^d\setminus\Omega.
\end{equation}
We are dealing with the integral definition of the fractional Laplacian which -- in
terms of the Fourier transformation -- can be defined as
\begin{equation}
    \label{eq:fourier_def_flap}
	\FT\left(\flap u\right)(\omega) = |\omega|^{2s} \hat u(\omega),
\end{equation}
where $\hat u (\omega) = \left(\FT u\right)(\omega)$ is the Fourier transformation of $u$.
If defined on the whole $\Rd$, at least ten different, but equivalent definitions of 
the fractional Laplacian exist \cite{Kwasnicki2015}. One of the definitions
that we want to mention here is the \emph{integral definition} which reads \cite{Dinezza2012}
\begin{equation}
	\label{eq:integral_def_flap}
	\flap u = C(d,s) \text{P.V.} \int_\Rd \frac{u(x) - u(y)}{\abs{x-y}^{d+2s}}\dy 
\end{equation}
and exposes the nonlocal character of the fractional Laplacian: 
to evaluate
$\flap u(x)$ at one point $x\in\Rd$, a singular integral over the whole $\Rd$ 
has to be evaluated. This makes numerical approximations of \cref{eq:bvp}
challenging. A proof for the equivalence of \cref{eq:fourier_def_flap} and \cref{eq:integral_def_flap},
as well as the exact value of the constant $C(d,s)$, can be found -- among others --
in \cite{Dinezza2012}. 

Various approaches to solve such problems have been proposed in the last years.
The article \cite{Borthagaray2017}, provides the analysis and
implementation of a finite element Galerkin method for the fractional ($s$)-Poisson problem on
Lipschitz domains. For $\mathbb P_1$-finite elements, this leads to a conforming
finite element method assuming $s\in(0,1)$. 
Further studies on finite element methods for the fractional Dirichlet
problem can be found for example in \cite{Ainsworth2017,Ainsworth2018,Bonito2018,Bonito2019}.
 Some further examples of approximations of nonlocal problems can be found in \cite{Delia2013,Karkulik2019}. 

Finite difference methods have been proposed 
in, e.g.,
\cite{Huang2016}. Such finite-difference formulations of fractional PDE
on Cartesian grids lead to linear systems with matrices that can be efficiently
applied to vectors using fast Fourier transformation (FFT) based algorithms. Examples
of algorithms of this type can be found in \cite{Minden2020} or in \cite{Duo2018}
and, of course, our aforementioned work. However, the analysis for such
algorithms typically needs strong regularity
assumptions or is limited to rectangular domains.

In \cite{ADS2021}, we proposed a collocation method to solve \cref{eq:bvp}
where we used a $\sinc$-basis to solve the equation pointwise on a grid.
More concretely, we find a function 
\begin{equation}
	u_h(x) = \sum\limits_{k\in\Zd} u_k\varphi^N_k(x),\quad u_k\in\R
\end{equation}
satisfying 
\begin{equation}
	\label{eq:formulation_collocation}
\begin{aligned}
	\flap u_h(x_k) &= f_k & \text{ if } x_k = hk\in\Omega \\
		u_h(x_k) &= 0 & \text{ if }x_k = hk\not\in\Omega\,.
\end{aligned}
\end{equation}
The functions $\varphi^N_k(x)$ are $\sinc$-functions which we scale by a
parameter $N$ and shift by $k$ gridpoints. The natural number $N > 0$ is the 
number of grid points we use in each spatial direction and $h\coloneqq 1/N$ is the
grid spacing. We will interchangeably use $N$ or $h = 1/N$ depending on the context. 
The equation $u_h(x_k) = 0$ if $x_k\not\in\Omega$ is equivalent to
$u_k = 0$ if $x_k\not\in\Omega$ as the basis functions fulfill the Kronecker-delta
property $\varphi^N_k(x_j) = \delta_{k,j}$. We detail this in
\cref{sub:description_sinc_collocation}, as well as we briefly recall
the $\sinc$-collocation method and its efficient implementation we presented in
the aforementioned paper.

Our approach to solve the fractional Dirichlet problem, formulated as a
collocation method, can be viewed also as a finite difference method.
 For the case $d = 1$,
this has been established in \cite{Huang2016}. As we pointed out in \cite{ADS2021},
our approach for solving \cref{eq:bvp} in more than one dimension can be seen
as a multi-dimensional generalization of this. Other works in this direction can be
found e.g. in the work \cite{Huang2014} by the same authors as \cite{Huang2016} where
a finite difference scheme based on quadrature of the singular integrals is defined;
in \cite{Han2022} where a similar technique as in \cite{Huang2014} is used to 
obtain a discretization of the operator on unstructured grids. 

Another approach that is related to ours is \cite{Hao2021}. They define a discrete
convolution operator whose coefficients are obtained via cubature in Fourier
space. This approach can be extended to non-rectangular domains using the fictious
domain method.

 As we detail in the following,
our method can also be seen as a Galerkin method using a $\sinc$-basis consisting of  
$C^\infty$-functions with unbounded support.  In stark contrast to, e.g., \cite{Borthagaray2017},
this leads to a nonconforming Galerkin method. Nevertheless, similarly to
\cite{Borthagaray2017}, no additional regularity
of the solution is assumed other than what has been shown in the literature for
such problems.

In our aforementioned work, we could demonstrate the effectiveness
of our method on a benchmark problem where $\Omega$ is a ball in $\Rd$ and
$f\equiv 1$ in $\Omega$. We also used it to solve problems with 
high spatial resolution, with a minimal computational expense. Indeed, 
our method has  quasi-linear   complexity, which is also visible in our computational
results.
For example, solving \cref{eq:bvp} with $s = 1/2$ and about $6\cdot
10^6$ grid points in $\R^3$ takes approximately $12$ minutes wall clock time on a standard office work
station. This includes the setup of the operator (approx.\ $10$ minutes) and the solve
of the system (approx.\ $2$ minutes), which we improved in comparison to the implementation
presented in \cite{ADS2021}, with the use of a simple periodic fractional Laplacian
as a preconditioner.

However, a numerical analysis of the method presented in \cite{ADS2021} was
still pending. This paper aims to close this gap. We want to
emphasize at this point that all the implementation details as well as the
pseudo-code that we already provided in \cite{ADS2021} may remain unchanged.  We show
quasi-optimal rates of convergence in terms of the energy norm which are in agreement with the
results that we obtained in numerical experiments.
 Briefly, our main result is that for $f\in C^{\frac{1}{2}}(\overline{\Omega})$
and under suitable assumptions on the domain $\Omega$,
the solution $u_h$ we obtain via \cref{eq:formulation_collocation}, taking $f_k = f(x_k)$ for $x_k\in \Omega$, fulfills the estimate 
\begin{equation}
	\label{eq:general_error_bound}
	\norm{u-u_h}_{H^{s}(\Rd)} \leq C(d, s, \Omega) \,\norm{f}_{C^{\frac{1}{2}}(\overline{\Omega})} \,\abs{\log{h}} h^{1/2}
\end{equation}
where $u$ is the exact solution to the problem and $h$ is the grid spacing.
We show this in \cref{thm:main_ii}.
 In our analysis, we need to assume that there exists a Hölder continuous extension
of $f$ to a domain that is slightly larger than $\Omega$. Thus,
we require $f\in C^\beta(\overline\Omega)$ for an appropriate $\beta$.
When $\beta\ge \frac{1}{2}$, the above result applies directly as then the result of an extension-and-mollification procedure applied to $f$ agrees sufficiently well with the original function. In cases where $f$ has more limited
regularity, we need to replace the discrete right-hand side $f_k$ by an evaluation of a suitable extended-and-mollified right hand side. For this approximation, we show that \cref{eq:general_error_bound}
holds in \cref{thm:main_i}.
 In our proofs, we make substantial use of the regularity
of the solutions shown in \cite{Borthagaray2017,Borthagaray2018,RosOton2014}. The only additional assumptions
we have to make concern the domain and the right-hand side.
We assume that one can slightly enlarge the domain and, when considering \cref{eq:bvp}
on the
enlarged domain, have similar regularity of solutions as for the problem
on the
original domain. Details are given in \cref{sub:domain_approximation}.

To establish the rates of convergence, we 
show how the collocation method can be equivalently written as a Galerkin method. As the resulting method is nonconforming, we must apply a Strang type lemma, see e.g. \cite{Brenner2007}, and bound
the resulting error terms appropriately. This reduces to showing that the chosen $\sinc$-basis
can approximate the elements of the fractional Sobolev space $\tildeHsOm$ as well as the elements
of its dual.

The remainder of this paper is structured as follows.
In \cref{sec:summary_related_results},
we summarize results of related work that this article is based on. This
includes a brief overview of the $\sinc$-collocation method we presented in
\cite{ADS2021} as well as the weak formulation of \cref{eq:bvp} and results
from regularity theory that we need.

In \cref{sec:formulation_sinc_galerkin_method}, we provide the formulation
of the $\sinc$-Galerkin method. This includes the definitions of the grid-dependent discrete
spaces as well as some properties of them, such as for example a \Poincare \
inequality, and inverse-estimate type result and properties of the norms on the
spaces. Further, we present the abstract error estimate that we use here.

In \cref{sec:approximation_errors}, we compute the approximation errors in
the discrete spaces which, using the abstract error estimate, immediately
yields the optimal rate of convergence for our numerical method in fractional
Sobolev norms. For this, we must make some additional assumptions on the regularity
of the domain.

Finally, \cref{sec:main_theorems}, contains our main theorems.

\section{Notation and Preliminary Results}
\label{sec:summary_related_results}
We briefly recapitulate known results that will be useful for the present work. In particular, we discuss the
$\sinc$-basis collocation method, as well as regularity theory.

\subsection{The \texorpdfstring{$\sinc$}{sinc}-collocation method}
\label{sub:description_sinc_collocation}
In \cite{ADS2021}, we proposed to approximate \cref{eq:bvp} in a $\sinc$-basis.
Using the $d$-fold Cartesian product of $\sinc$-functions, we defined a basis
function
\begin{equation}
	\varphi(x) \coloneqq \prod \limits_{i=1}^d \sinc(x_i),\quad \sinc(x)\coloneqq \frac{\sin(\pi x)}{\pi x},\quad
		x = (x_1\cdots x_d)^T\in\Rd,
\end{equation}
and its dilated and shifted versions,
\begin{equation}
	\varphi^N_k(x) = \varphi(Nx-k),\quad k = (k_1 \cdots k_d)^T\in\Z^d,
\end{equation}
which span our discrete space.
Throughout this work, we use the conventions 
\begin{equation}
	\FT u(\omega) = (2\pi)^{-d} \int_\Rd u(x) \e^{-i\omega x}\dx, \qquad
		\IFT \hat u(x) = \int_\Rd \hat u(\omega) \e^{\i\omega x}\mbox{d}\omega.
\end{equation}
For this specific scaling of the Fourier transformation, we have the Plancherel identity
\begin{equation}
	(v, w)_{L^2(\Rd)} = (2\pi)^d (\hat v, \hat w)_{L^2(\Rd)}.
\end{equation}
A well-known result  (see, e.g., \cite{Stenger2011}) is that
\begin{equation}
	\sinc(x) = \frac{1}{2\pi}\int_{[-\pi,\pi]} \e^{\i\omega x}\dom = \frac{1}{2\pi} \IFT \chi_{[-\pi,\pi]}(\omega),
\end{equation}
so that $\FT\varphi(\omega) = \frac{1}{(2\pi)^d} \chi_{[-\pi,\pi]^d}(\omega)$ and, using
the Fourier shift and scaling theorems,
\begin{equation}
	\widehat{\varphi^N_k}(\omega) = N^{-d}  \e^{-\i \omega k/N }\hat\varphi (\omega/N)
		= (2N\pi)^{-d} \e^{-\i\omega  k/N} \chi_{D_N}(\omega),\quad D_N \coloneqq [-N\pi,N\pi]^d.
\end{equation}
In order to solve \cref{eq:bvp}, we proposed in \cite{ADS2021} to solve the
problem pointwise on the grid $h\Z^d$, $h \coloneqq 1/N$, in the sense of a collocation
method. As presented in \cref{eq:formulation_collocation}
this formulation leads to the linear system 
\begin{equation}
	\label{eq:discrete_linear_system}
	S_\Omega\Phi^NS_\Omega^T \vec u = \vec f,
\end{equation}
and we provided efficient strategies to assemble the system matrix $\Phi^N$
and to solve the resulting system. In \cref{eq:discrete_linear_system},
$\vec u = (u_k)$ is the sought-after coefficient vector of the solution $u_h$ and
$\vec f = (f_k)_{k/N\in\Omega}$ is the discrete right-hand side which we obtained
as pointwise evaluations of $f$ within $\Omega$ for the numerical experiments in \cite{ADS2021}.
$S_\Omega$ is a linear operator which
leaves the coefficients within the domain unchanged and sets the others to
zero, thus restricting the domain of computation. When we numerically
solve this system, this approach is the projected conjugate gradient method,
alternatively our method can be viewed as a constrained quadratic
optimization problem which we solve using a null-space approach.

The application of the system matrix to a coefficient vector $\vec u$ is essentially the discrete 
convolution with the fractional Laplacian of the basis function $\varphi$ above, namely
\begin{equation}
	\label{eq:discrete_operator}
    (\Phi^N \vec u)_j = \sum\limits_{k\in \{0\ldots N-1\}^d} u_k \Phi^N(j-k),\quad
        \Phi^N(j-k) = N^{2s} \flap \varphi(j-k).
\end{equation}
This convolution can be computed efficiently using the discrete Fourier transformation, leading to a numerical complexity of $\mathcal O(N^{d}\cdot\log(N^{d}))$.

\subsection{Weak formulation of the boundary value problem}
\label{sub:weak_formulation_bvp}
The weak formulation of \cref{eq:bvp} reads as follows \cite{RosOton2014, Borthagaray2017, Bonito2019}.
 Find $u\in \V$ such that 
\begin{equation}
	\label{eq:weak}
	a(u,v) = F(v) \quad \text{for all $v\in \V$}
\end{equation}
where $\V$ is the space
\begin{equation}
	\label{eq:def_V}
	\V \coloneqq \tildeHsOm = \{v\in H^s(\Rd) \, | \, v = 0 \ {\rm in } \ \mathbb{R}^d \setminus \Omega\}
\end{equation}
and $H^s(\Rd)$ is a fractional Sobolev space \cite{Dinezza2012}.
The bilinear form $a : \V\times\V\longrightarrow \R$  reads
\begin{equation}
	a(u, v) = \int\limits_\Rd (-\lap)^{\frac{s}{2}} u\cdot (-\lap)^{\frac{s}{2}} v\dx,
\end{equation}
and the linear form $F(\cdot)$ is given by
\begin{equation}
	F(v) = \int_{\Omega} f v\dx
\end{equation}
for $f \in L^2(\Omega)$.
Using the Plancherel identity, the bilinear form can be evaluated in the
Fourier domain, which yields
\begin{equation}
	\label{eq:def_a_on_V}
	a(u, v) = 
	(2\pi)^d\int\limits_{\R^d} |\omega|^{2s} \hat u {\hat v}^*\dom
\end{equation}
where  the asterisk  denotes the complex conjugate. The fractional Sobolev seminorm
for $u\in H^s(\Rd)$ is defined as
\begin{equation}
	\label{eq:def_frac_seminorm}
	\Hs{u} \coloneqq a(u, u)^{1/2} =
		\left( (2\pi)^{d/2}\int_\Rd \abs{\omega}^{2s}\abs{\hat u}^2 \right)^{1/2}.
\end{equation}
Given a suitable finite dimensional subspace $V_h\subset \V$, an abstract Galerkin
method can be formulated as follows. Find $u_h\in V_h$ such that
\begin{equation}
	\label{eq:abstract_galerkin_formulation}
	a(u_h, v_h) = F(v_h)\quad\text{for all $v_h\in V_h$}.
\end{equation}
This Galerkin formulation can be used as to derive a finite element method (see, e.g., \cite{Borthagaray2017}); a non-conforming version thereof also forms the basis of our convergence analysis.
\subsection{Summary of results from the literature}
\label{sub:summary_regularity_results}
For our analysis, we require certain regularity and decay properties of solutions to \cref{eq:bvp}
and some mathematical key tools.  In particular, we require
 \cite[Lemma 2.7]{RosOton2014}, which we repeat here for the reader's convenience.
\begin{theorem}[Lemma 2.7 in \cite{RosOton2014}]
	\label{thm:bound_u_boundary}
Let $\Omega$ be a bounded domain satisfying the exterior ball condition and let $f\in L^\infty(\Omega)$. Let $u$ be the solution of \cref{eq:bvp}. Then, 
\begin{equation}
    \label{eq:bound_u_boundary}
    \abs{u(x)}\leq C \norm{f}_{L^\infty(\Omega)} \dist(x,\Omega^c)^s
\end{equation}
where $C = C(\Omega, s)$ is a constant depending only on $\Omega$ and $s$.
\end{theorem}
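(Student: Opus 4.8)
The plan is to prove the estimate \cref{eq:bound_u_boundary} by a barrier argument built on the comparison principle for $\flap$ together with an explicit supersolution adapted to the exterior ball condition. First I would make two reductions. By linearity of \cref{eq:bvp} in $f$ we may normalize $\norm{f}_{L^\infty(\Omega)}=1$, and it suffices to prove the one-sided bound $u(x)\le C\dist(x,\Omega^c)^s$, since applying it to the solution with data $-f$ yields the matching lower bound; moreover the estimate is trivial for $x\in\Rd\setminus\Omega$, where $u\equiv 0$. Second, a global bound $\norm{u}_{L^\infty(\Rd)}\le C(\Omega,s)$ follows by comparing $u$ with the torsion function of a ball $B_R\supset\Omega$: the function $x\mapsto c_{d,s}(R^2-\abs{x}^2)_+^s$ solves $\flap\,\cdot\,=1$ in $B_R$ with vanishing exterior data, hence is a supersolution for \cref{eq:bvp} on $\Omega$ and dominates $u$. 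Consequently, writing $\rho>0$ for the uniform exterior-ball radius of $\Omega$, the estimate is immediate whenever $\dist(x,\Omega^c)\ge\rho$ (then $\dist(x,\Omega^c)^s\ge\rho^s$ and one uses the global $L^\infty$ bound), so it remains to treat $x\in\Omega$ with $d\coloneqq\dist(x,\Omega^c)<\rho$.

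The core ingredient is an annular barrier: a radial function $\Phi\colon\Rd\to[0,\infty)$ with $\Phi\equiv 0$ on $\overline{B_1}$, $\Phi$ bounded, $\Phi(x)\le C(\abs{x}-1)^s$ for $1\le\abs{x}\le 2$, $\Phi\ge c_0>0$ on $\Rd\setminus B_2$, and $\flap\Phi\ge c_1>0$ pointwise on the open annulus $B_2\setminus\overline{B_1}$. One may take $\Phi$ to be a truncation of $x\mapsto\big((\abs{x}^2-1)_+\big)^s$: the $s$-growth off $\partial B_1$ and the boundedness are then elementary, and the pointwise lower bound for $\flap\Phi$ on the annulus follows from the explicit evaluation of $\flap$ of this function in the exterior of the unit ball (which can be derived from the classical interior identity, that $\flap\big((1-\abs{x}^2)_+^s\big)$ equals a positive constant on $B_1$, via the Kelvin transform); alternatively one may invoke an existing barrier lemma from the boundary regularity theory, e.g.\ in \cite{RosOton2014}. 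Rescaling and translating, $\Phi_\rho(x)\coloneqq\Phi\big((x-y)/\rho\big)$ satisfies $\Phi_\rho\equiv 0$ on $\overline{B_\rho(y)}$, $\flap\Phi_\rho\ge c_1\rho^{-2s}$ on $B_{2\rho}(y)\setminus\overline{B_\rho(y)}$, $\Phi_\rho\ge c_0$ on $\Rd\setminus B_{2\rho}(y)$, and $\Phi_\rho(x)\le C\rho^{-s}(\abs{x-y}-\rho)^s$ on $B_{2\rho}(y)$.

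To conclude, fix $x\in\Omega$ with $d<\rho$, pick $z\in\partial\Omega$ with $\abs{x-z}=d$, and let $B_\rho(y)\subset\Omega^c$ be an exterior ball with $z\in\partial B_\rho(y)$. Then $\abs{x-y}\le\abs{x-z}+\abs{z-y}=d+\rho<2\rho$, while $\abs{x-y}>\rho$ since $\overline{B_\rho(y)}\subset\Omega^c$ cannot contain the point $x\in\Omega$; thus $x$ lies in the annulus $A_\rho\coloneqq B_{2\rho}(y)\setminus\overline{B_\rho(y)}$. Set $w\coloneqq A\,\Phi_\rho$ with $A\coloneqq\max\{c_1^{-1}\rho^{2s},\,c_0^{-1}\norm{u}_{L^\infty(\Rd)}\}$, a constant depending only on $\Omega$ and $s$. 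On $A_\rho\cap\Omega$ one has $\flap w\ge A\,c_1\rho^{-2s}\ge 1\ge f=\flap u$, and on $\Rd\setminus(A_\rho\cap\Omega)$ one has $w\ge u$, because $w=0=u$ on $\overline{B_\rho(y)}$, $w\ge 0=u$ on the rest of $\Rd\setminus\Omega$, and $w\ge A\,c_0\ge\norm{u}_{L^\infty(\Rd)}\ge u$ on $\Omega\setminus B_{2\rho}(y)$. The comparison principle for $\flap$ on the bounded open set $A_\rho\cap\Omega$ then yields $u\le w$ on $\Rd$, whence $u(x)\le A\,\Phi_\rho(x)\le A\,C\rho^{-s}(\abs{x-y}-\rho)^s\le A\,C\rho^{-s}d^s=C(\Omega,s)\,d^s$, using $\abs{x-y}-\rho\le d$.

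The step I expect to be the main obstacle is the construction of the annular barrier $\Phi$: one needs a \emph{strictly} positive lower bound for $\flap\Phi$ throughout the annulus while keeping the sharp $(\abs{x}-1)^s$ growth off the inner sphere, and the relevant singular integral in the exterior of a ball is noticeably more delicate than the classical interior computation. A secondary point requiring care is the justification of the pointwise comparison principle at the a priori regularity of $u$, which is handled by combining interior Hölder regularity of solutions for $f\in L^\infty$ with the explicit smoothness of $\Phi_\rho$ in the open annulus.
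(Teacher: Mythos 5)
The paper does not actually prove this statement: it is quoted verbatim from \cite{RosOton2014} (Lemma 2.7 there) ``for the reader's convenience,'' so there is no in-paper proof to compare against. Your barrier argument is, in essence, exactly the proof given in that reference: a global $L^\infty$ bound via the torsion function of a large ball, an annular supersolution vanishing on the inner ball with $(\abs{x}-1)^s$ growth off the inner sphere (their Lemma~2.6), rescaled and translated to the exterior ball touching the nearest boundary point, and the comparison principle on the intersection of the annulus with $\Omega$. The one step you rightly flag as delicate --- producing a strictly positive lower bound for $\flap\Phi$ throughout the annulus while retaining the sharp $s$-power growth --- is precisely the content of Lemma~2.6 in \cite{RosOton2014}, so your proof is correct modulo importing that barrier construction (or carrying out the Kelvin-transform computation you sketch, noting that the resulting radial profile is comparable to, rather than equal to, $(\abs{x}^2-1)_+^s$ near the inner sphere, and that the lower bound $\Phi\ge c_0$ is only needed on a bounded set since $\Omega$ is bounded).
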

Acosta and Borthagaray \cite{Borthagaray2018, Borthagaray2017} use global Hölder regularity of solutions (also proved in \cite{RosOton2014}) to derive estimates for $u$ in fractional Sobolev norms. Again, we repeat the result here for the reader's convenience.
\begin{theorem}[Proposition 2.4 in \cite{Borthagaray2018}]
	\label{thm:sobolev_reg_u_lipschitz}
	Let $\Omega$ be a Lipschitz domain satisfying the exterior ball condition and consider
	$\beta = 1/2 - s$ if $s < 1/2$ or $\beta > 0$ if $s\geq 1/2$. Then, if $f\in C^\beta(\Omega)$,
	the solution $u$ of \cref{eq:bvp} belongs to
	$\widetilde{H}^{s+1/2-\theta}(\Omega)$ with
	\begin{equation}
	    \label{eq:bound_u_sobolev}
		\norm{u}_{\widetilde{H}^{s+1/2-\theta}(\Omega)} \leq \frac{C(\Omega, s, d)}{\theta}
			\norm{f}_{C^{\beta}(\Omega)}
	\end{equation}
 for any $\theta > 0$.
\end{theorem}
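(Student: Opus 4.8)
Since $u$ vanishes outside $\Omega$ by \cref{eq:bvp}, membership $u\in\widetilde H^{\sigma}(\Omega)$ with $\sigma:=s+\tfrac12-\theta$ amounts to $u\in H^\sigma(\Rd)$, so it suffices to bound $\norm{u}_{L^2(\Rd)}$ and the Gagliardo seminorm
\begin{equation*}
[u]_{H^\sigma(\Rd)}^2 = \int_{\Rd}\int_{\Rd}\frac{\abs{u(x)-u(y)}^2}{\abs{x-y}^{d+2\sigma}}\dx\dy .
\end{equation*}
The $L^2$-bound is immediate from testing \cref{eq:weak} with $v=u$ and a \Poincare\ inequality on the bounded domain $\Omega$, giving $\norm{u}_{L^2(\Rd)}\lesssim\norm{f}_{L^2(\Omega)}\lesssim\norm{f}_{C^\beta(\Omega)}$. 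For the seminorm I would use three inputs: (i) the boundary decay $\abs{u(x)}\le C\norm{f}_{L^\infty(\Omega)}\dist(x,\Omega^c)^s$ from \cref{thm:bound_u_boundary}; (ii) global $C^s$-regularity of $u$, with norm controlled by $\norm{f}_{L^\infty(\Omega)}$; and (iii) interior Schauder estimates for $\flap$ in the sharp form that isolates the nonlocal tail: if $B_\rho(x_0)\Subset\Omega$ with $\rho\lesssim 1$, then for every admissible $\gamma\le 2s+\beta$,
\begin{align*}
\rho^{\gamma}[u]_{C^\gamma(B_{\rho/2}(x_0))}
&\lesssim \norm{u}_{L^\infty(B_\rho(x_0))} + \rho^{2s}\int_{\Rd\setminus B_\rho(x_0)}\frac{\abs{u(y)}}{\abs{y-x_0}^{d+2s}}\dy + \rho^{2s}\norm{f}_{C^\beta(\Omega)}\\
&\lesssim \norm{f}_{C^\beta(\Omega)}\,\rho^{s},
\end{align*}
where in the last step the bulk and tail terms are estimated by (i). The choice $\beta=\tfrac12-s$ when $s<\tfrac12$ is precisely what makes $2s+\beta=s+\tfrac12$, so that one may fix $\gamma\in(\sigma,2s+\beta]$; for $s\ge\tfrac12$ any $\beta>0$ suffices, and when $\sigma\ge1$ (which forces $s>\tfrac12$) one runs the argument below for $\nabla u$, whose interior bound and boundary blow-up $\abs{\nabla u(x)}\lesssim\dist(x,\Omega^c)^{s-1}$ follow by applying (iii) at scale $\dist(x,\Omega^c)$.

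\textbf{The core estimate.} I would dyadically decompose $\Omega$ into boundary layers $\Omega_j:=\{x\in\Omega:2^{-j-1}<\dist(x,\Omega^c)\le 2^{-j}\}$, $j\ge j_0$, plus a fixed deep-interior region $\{\dist(x,\Omega^c)>2^{-j_0}\}$; since $\partial\Omega$ is Lipschitz, $\abs{\Omega_j}\lesssim 2^{-j}$ and the level sets of $\dist(\cdot,\Omega^c)$ have uniformly bounded $(d-1)$-measure. Then split the double integral according to the location of the pair $(x,y)$:
\begin{enumerate}[(a)]
	\item $y\in\Omega^c$ (hence $u(y)=0$): the inner integral is $\int_{\Omega^c}\abs{x-y}^{-d-2\sigma}\dy\lesssim\dist(x,\Omega^c)^{-2\sigma}$, leaving $\int_\Omega\abs{u(x)}^2\dist(x,\Omega^c)^{-2\sigma}\dx$; by (i) and $2s-2\sigma=-1+2\theta$ this is $\lesssim\norm{f}_{C^\beta(\Omega)}^2\int_\Omega\dist(x,\Omega^c)^{-1+2\theta}\dx\lesssim\theta^{-1}\norm{f}_{C^\beta(\Omega)}^2$ by the coarea formula.
	\item $x,y\in\Omega$ and $\abs{x-y}\ge\tfrac12\max(\dist(x,\Omega^c),\dist(y,\Omega^c))$: bound $\abs{u(x)-u(y)}^2\le 2\abs{u(x)}^2+2\abs{u(y)}^2$ and reduce to the integral of (a), again $\lesssim\theta^{-1}\norm{f}_{C^\beta(\Omega)}^2$.
	\item $x,y\in\Omega$ and $\abs{x-y}<\tfrac12\max(\dist(x,\Omega^c),\dist(y,\Omega^c))$: then both points lie in a ball $B_{\rho/2}(z)\Subset\Omega$ with $\rho\sim\dist(x,\Omega^c)\sim\dist(y,\Omega^c)$, so (iii) gives $\abs{u(x)-u(y)}\lesssim\norm{f}_{C^\beta(\Omega)}\,\rho^{s-\gamma}\abs{x-y}^\gamma$; integrating in $y$ over $\{\abs{x-y}<\rho/2\}$ (convergent since $\gamma>\sigma$) and then $x$ over $\Omega_j$ with $\rho=2^{-j}$, the layer contribution is $\lesssim\abs{\Omega_j}\norm{f}_{C^\beta(\Omega)}^2\rho^{2(s-\gamma)}\rho^{2\gamma-2\sigma}=\norm{f}_{C^\beta(\Omega)}^2\rho^{1+2s-2\sigma}=\norm{f}_{C^\beta(\Omega)}^2\rho^{2\theta}$, and $\sum_{j\ge j_0}2^{-2j\theta}\lesssim\theta^{-1}$.
\end{enumerate}
The deep-interior region contributes a constant depending only on $\Omega,s,d,j_0$, since there $u$ is uniformly $C^{2s+\beta}$ and $\sigma<2s+\beta$. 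Summing and taking square roots yields \cref{eq:bound_u_sobolev}, the factor $\theta^{-1}$ being the dominant $\theta$-dependence.

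\textbf{Main obstacle.} The delicate point is input (iii) with the correct scaling: one must invoke an interior regularity estimate for $\flap$ (from \cite{RosOton2014,Borthagaray2018}) in the sharp form that separates off the nonlocal tail of $u$, rescale it to balls of radius $\rho$, and then \emph{use only the decay (i)} to absorb both the local $L^\infty$-norm and the tail into $\norm{f}_{C^\beta(\Omega)}\rho^s$ — exercising care at the thresholds where $2s$ or $2s+\beta$ is an integer (replacing Hölder by $C^{k,\alpha}$ spaces and passing to $\nabla u$ when $\sigma\ge1$), and tracking the constant in the equivalence of the Gagliardo and Fourier-side $H^\sigma$-seminorms near $\sigma=1$. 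The remaining ingredients — the geometry of Lipschitz boundary layers and the bookkeeping converting the geometric sum over $j$ into the $\theta^{-1}$ factor — are routine by comparison.
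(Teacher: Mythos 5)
This statement is not proved in the paper at all: it is quoted verbatim as Proposition~2.4 of \cite{Borthagaray2018} (Acosta--Borthagaray), so the paper's ``proof'' is simply the citation. Your argument is, in substance, a correct reconstruction of the proof given in that reference: the $L^2$ bound via testing and \Poincare, the Gagliardo-seminorm splitting into the pairs with $y\in\Omega^c$, the far-from-diagonal pairs handled by the boundary decay $|u(x)|\lesssim \dist(x,\Omega^c)^s$ of \cref{thm:bound_u_boundary}, and the near-diagonal pairs handled by the weighted interior H\"older estimate $[u]_{C^\gamma(B_{\delta(x)/2}(x))}\lesssim \delta(x)^{s-\gamma}\norm{f}_{C^\beta}$ of Ros-Oton--Serra, with the $\theta^{-1}$ emerging from $\int_\Omega \delta(x)^{-1+2\theta}\dx$ and the geometric sum over dyadic boundary layers; this is exactly the structure of the cited proof, including the passage to $\nabla u$ when $\sigma\geq 1$. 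The only caveats are the ones you already flag yourself: input (iii) is invoked rather than derived (as in the reference), the radii in case (c) need a harmless constant adjustment so that both points genuinely sit in the half-ball where the interior estimate applies, and at the single borderline $s=1/2$ the Gagliardo-to-Fourier equivalence constant contributes an extra factor that mildly worsens the $\theta$-dependence if one insists on the Fourier-side norm.
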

If $\Omega$ is smooth, no Hölder-regularity of $f$ is needed in order to have Sobolev-regularity
of the solution $u$. In fact, in that case, we have the following theorem.
\begin{theorem}[Proposition 2.7 in \cite{Borthagaray2018}]
	\label{thm:sobolev_reg_u_smooth}
	Let $\Omega$ be a smooth domain, $f\in H^r(\Omega)$ for $r\geq -s$ and $u\in\tildeHsOm$ 
	be the solution of the Dirichlet problem \cref{eq:bvp}. Then, the following regularity
	estimate holds \[
		\abs{u}_{H^{s+\alpha}(\Rd)} \leq C(d,\alpha)\norm{f}_{H^{r}(\Omega)}.
	\]
	Here, $\alpha = s+r$ if $s+r < \frac{1}{2}$ and $\alpha = \frac{1}{2} - \theta$ 
	if $s+r\geq \frac{1}{2}$, with $\theta > 0$ arbitrarly small.
\end{theorem}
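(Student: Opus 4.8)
The plan is to split $u$ into an interior part and a boundary part, estimate each in the appropriate Bessel potential space, and glue the two pieces; the fact that the exponent saturates at $s+\tfrac12$ is purely a boundary effect, coming from the $\dist(\cdot,\Omega^c)^s$ behaviour already recorded in \cref{thm:bound_u_boundary}.

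\textbf{Interior estimate.} Pick a cutoff $\zeta\in C_c^\infty(\Omega)$ that is identically $1$ on a slightly smaller open set and decompose $u=\zeta u+(1-\zeta)u$. Since $\flap$ is a (classical) pseudodifferential operator of order $2s$ with symbol $\abs{\omega}^{2s}$, which is smooth and elliptic away from the origin, and since
\[
	\flap(\zeta u)=\zeta\,\flap u+[\flap,\zeta]u=\zeta f+[\flap,\zeta]u ,
\]
with the commutator $[\flap,\zeta]$ of order $2s-1$ (its kernel is $(\zeta(x)-\zeta(y))\abs{x-y}^{-d-2s}$, one order less singular than that of $\flap$), a parametrix argument together with a short bootstrap in $\zeta$ gives $\zeta u\in H^{2s+r}(\Rd)$ with
\[
	\norm{\zeta u}_{H^{2s+r}(\Rd)}\le C\big(\norm{f}_{H^r(\Omega)}+\norm{u}_{\tildeHsOm}\big),
\]
and the last term is controlled by $\norm{f}_{H^{-s}(\Omega)}\le\norm{f}_{H^r(\Omega)}$ through the energy estimate for \cref{eq:weak} (using $r\ge -s$). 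When $s+r<\tfrac12$ this is already the claim, since then $2s+r=s+\alpha$ and the boundary layer below is no worse.

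\textbf{Boundary estimate.} Using the smoothness of $\Omega$ I would cover $\partial\Omega$ by finitely many charts, flatten the boundary in each, and reduce $(1-\zeta)u$ — which is supported in a collar of $\partial\Omega$ inside $\overline\Omega$, as $u$ vanishes outside — to a model Dirichlet problem for $\flap$ on a half-space $\{x_d>0\}$. For the half-space, solutions with smooth data are of the form $x_d^s$ times a function smooth up to $\{x_d=0\}$ (equivalently, they lie in Grubb's $s$-transmission Bessel potential spaces), and such a function belongs to $H^{s+1/2-\theta}$ for every $\theta>0$ but not to $H^{s+1/2}$. Transporting this back through the charts and the partition of unity produces $(1-\zeta)u\in H^{s+1/2-\theta}(\Rd)$ with the asserted dependence on $\norm{f}_{H^r(\Omega)}$. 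Adding the two pieces and choosing $\alpha=s+r$ or $\alpha=\tfrac12-\theta$ according to the sign of $s+r-\tfrac12$ (interpolating in the borderline case) gives the theorem.

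\textbf{Main obstacle.} The genuine difficulty is the boundary step. Flattening a nonlocal operator is not exact — the conjugated operator differs from $\flap$ by an operator of order strictly less than $2s$, which must be carried along and absorbed perturbatively into the right-hand side — and identifying $s+\tfrac12$ as the precise Sobolev threshold of the $x_d^s$ boundary layer requires either an explicit half-space computation or the machinery of $\mu$-transmission pseudodifferential operators. Making this rigorous is exactly the content of Grubb's regularity theory and of the boundary Hölder estimates of Ros--Oton--Serra underlying \cref{thm:bound_u_boundary}; a fully detailed proof would reproduce or directly invoke those, which is the route taken in \cite{Borthagaray2018}.
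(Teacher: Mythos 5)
This statement is not proved in the paper at all: it is quoted verbatim as Proposition 2.7 of \cite{Borthagaray2018} and used as a black box, so there is no in-paper argument to compare your proposal against. What you have written is a sketch of how that external result is actually established in the literature, and as a sketch it is structurally correct: interior elliptic regularity for the pseudodifferential operator $\abs{\omega}^{2s}$ gives the lift by $2s$ orders up to the threshold, while the $\dist(\cdot,\Omega^c)^s$ boundary layer (the same behaviour as in \cref{thm:bound_u_boundary}) caps the global Sobolev regularity at $s+\tfrac12-\theta$, and this is exactly why the exponent $\alpha$ saturates. You are also right, and commendably explicit, that the boundary step is the entire difficulty and that making it rigorous requires Grubb's $\mu$-transmission calculus (or, for less regular data, Vishik--Eskin theory); that is precisely what \cite{Borthagaray2018} invokes rather than reproves.

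So the honest assessment is: your proposal is a correct roadmap but not a proof. The two places where it is genuinely incomplete are (i) the interior bootstrap --- the claim that $[\flap,\zeta]$ has order $2s-1$ and that a parametrix plus finitely many iterations yields $\zeta u\in H^{2s+r}(\Rd)$ is standard but needs the commutator estimate stated and proved in the relevant range of $s$ and $r$, including negative $r\ge -s$ --- and (ii) the boundary reduction, where flattening a nonlocal operator produces a remainder of order $<2s$ that must be absorbed, and identifying $s+\tfrac12$ as the exact Sobolev threshold of $x_d^s$ requires either the explicit half-space computation or Grubb's spaces. Since the paper itself treats the theorem as an imported fact, deferring these two points to the cited literature is acceptable here; just be aware that as written your argument does not stand on its own.
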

\begin{remark}
\label{rem:def_cbound_csob}
In the following, approximation errors depend on the decay of the solution
to the boundary in the sense of \Cref{thm:bound_u_boundary} and on the $\abs{\,\cdot\,}_{H^{t}(\Rd)}$-seminorm of the solution $u$ for some $t > s$ in the sense of \Cref{thm:sobolev_reg_u_lipschitz,thm:sobolev_reg_u_smooth}.
To simplify the statements, we write
\[
	u(x) \leq \cbound(\Omega, f, s) \dist(x,\Omega^c)^s
\]
and
\[
	\norm{u}_{H^{t}(\Rd)} \leq \csob(\Omega, f, s)
\]
for the solution $u$ to the exterior value problem on a suitable domain $\Omega$
and a given right-hand side $f$. The constant $\csob(\Omega, f, s)$ has to be chosen
appropriately, depending on the given domain and right-hand side, in the sense of
	\Cref{thm:sobolev_reg_u_lipschitz,thm:sobolev_reg_u_smooth}.  We note that other 
	results regarding  Sobolev regularity, as, e.g., in \cite{Abels2023}, 
		may be used to obtain our convegence rates as well. This could lead to different requirements on the domain regularity.
\end{remark}
Another key ingredient for our proofs is the \emph{Poisson summation formula (PSF)}
that can be stated as follows 
\begin{theorem}[Poisson summation formula] Suppose that $u,\hat u \in L^1(\Rd)$
	\label{thm:poisson_summation}
	satisfy 
	\begin{equation}
		\abs{u(x)} + \abs{\hat u(x)} \leq C \left(1+\abs{x}\right)^{-d-\delta}
	\end{equation}
	for some $C,\delta > 0$. Then $u$ and $\hat u$ are both continuous and for all $x\in\Rd$ we 
	have
	\begin{equation}
\frac{1}{(2N\pi)^d}\sum\limits_{k\in\Zd} u(k/N) \e^{-\i k/N}
	= \sum\limits_{k\in\Zd}\hat u(2N\pi k + \omega)
	\end{equation}
\end{theorem}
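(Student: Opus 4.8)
The plan is to prove the identity by recognizing its right-hand side as a periodization of $\hat u$ and computing the Fourier coefficients of that periodization with the inversion formula. Set $G(\omega)\coloneqq\sum_{k\in\Zd}\hat u(\omega+2N\pi k)$, so that the claim becomes $G(\omega)=(2N\pi)^{-d}\sum_{k\in\Zd}u(k/N)\,\e^{-\i\omega k/N}$. The decay hypothesis $\abs{\hat u(x)}\le C(1+\abs{x})^{-d-\delta}$ shows that this series is dominated, uniformly for $\omega$ in any compact set, by a convergent series of the form $\sum_{k\in\Zd}(1+\abs{k})^{-d-\delta}$; hence $G$ is well defined, continuous, and $2N\pi$-periodic in each coordinate. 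The same decay together with $u,\hat u\in L^1(\Rd)$ lets us invoke the classical $L^1$-inversion theorem, so that $u(x)=\IFT\hat u(x)=\int_\Rd\hat u(\omega)\,\e^{\i\omega x}\dom$ holds pointwise.

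I would then expand the $2N\pi$-periodic function $G$ in its Fourier series on the cube $[-N\pi,N\pi]^d$, whose fundamental frequencies are $\tfrac1N\Zd$. Unfolding the periodization — legitimate by the uniform convergence established above — and using that $\e^{2\pi\i j\cdot k}=1$ for $j,k\in\Zd$ gives, for $j\in\Zd$,
\[
	(2N\pi)^d c_j=\int_{[-N\pi,N\pi]^d}G(\omega)\,\e^{-\i j\omega/N}\dom=\int_{\Rd}\hat u(\omega)\,\e^{-\i j\omega/N}\dom=u(-j/N),
\]
the last step being the inversion formula evaluated at $-j/N$. Since $\sum_{j\in\Zd}\abs{c_j}=(2N\pi)^{-d}\sum_{j\in\Zd}\abs{u(j/N)}<\infty$ by the decay of $u$, the series $\sum_{j\in\Zd}c_j\,\e^{\i j\omega/N}$ converges absolutely and uniformly to a continuous function that shares its Fourier coefficients with the continuous function $G$, and therefore equals $G$. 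Reindexing $j\mapsto-j$ in $G(\omega)=\sum_{j\in\Zd}c_j\,\e^{\i j\omega/N}$ yields precisely the asserted formula.

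I do not anticipate a genuine obstacle: this is essentially the textbook proof of the Poisson summation formula, and the only points demanding attention are the bookkeeping of the $(2N\pi)^d$ and $2\pi$ factors dictated by the normalization of $\FT$ fixed in \cref{sub:description_sinc_collocation}, the justification of interchanging sum and integral in the unfolding step, and the standard uniqueness fact that a continuous periodic function is determined by its Fourier coefficients once these are known to be absolutely summable. Alternatively, one may first prove the classical case $N=1$, $\omega=0$ and then recover the general statement by applying it to the modulated dilation $x\mapsto u(x/N)\,\e^{\i\omega x/N}$ and invoking the scaling and modulation rules for $\FT$; the effort is comparable, and I would present whichever is shorter in context.
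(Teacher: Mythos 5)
Your proof is correct and is precisely the standard periodization-plus-Fourier-coefficient argument that the paper itself does not spell out but delegates to \cite{Grafakos2010} (Theorem 3.1.17) after rescaling to its normalization of $\FT$; your handling of the $(2N\pi)^d$ factors and of the interchange of sum and integral matches what that reference does. You have also, correctly, read the misprinted $\e^{-\i k/N}$ in the statement as $\e^{-\i \omega k/N}$, which is the intended form and the one used later in the paper.
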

For a proof, see Theorem 3.1.17 in \cite{Grafakos2010} after appropriately
rescaling to match the conventions of the Fourier transformations used there
and exchanging the roles of $\FT$ and $\IFT$ or \cite{Nguyen2017}.

\section{Formulation of the \texorpdfstring{$\sinc$}{sinc}-Galerkin method}
We now formulate a Galerkin method on the space of $\sinc$-functions, which yields
the same solutions as the $\sinc$-collocation method introduced earlier.
\label{sec:formulation_sinc_galerkin_method}
\subsection{Discrete Function Spaces}
As a first discrete space, we define
\begin{equation}
	\V_h(\Rd) = \left\{ v_h\Big|
		v_h = \sum\limits_{k\in\Zd} v_k\varphi^N_k(x),\, k\in\Zd
		\right\} \cap H^s(\Rd)
\end{equation}
using scaled and shifted $\sinc$-functions $\varphi^N_k(x)$ as our basis. 
To actually implement the method, we need a finite-dimensional subspace of $\V_h(\Rd)$. We take
\begin{equation}
	\V_h(\Omega) = \left\{
		v_h = \sum\limits_{k\in\Omega_h} v_k\varphi^N_k(x)
		\right\},\quad \Omega_h = \left\{k\in\Zd\text{ s.t. }k/N\in\Omega\right\}.
\end{equation}
Denoting grid points by $x_k \coloneqq k/N$, we mention that for $k,l\in\Zd$ we have $\varphi^N_k(x_k) = 1$ and $\varphi^N_k(x_l) = 0$ if $k\neq l$. 
We implicitly extend functions $v\in\V_h(\Omega)$ to functions in $\V_h(\Rd)$ by setting the coefficients in $h\Zd\setminus\Omega_h$ to $0$.

Complementary to the extension, we define a \emph{discrete restriction operator} $\resOm{\cdot} : \V_h(\Rd)\rightarrow\V_h(\Omega)$
via
\begin{equation}
	\label{eq:def_discrete_restriction}
	\resOm{v_h}(x) = \sum\limits_{k\in\Omega_h} v_k\varphi^N_k(x).
\end{equation}
 We emphasize that the restriction operator $\resOm{\cdot}$ does not actually restrict the support
 of the continuous function $v_h$ to $\Omega$, but only the sum of the coefficients
 to the coefficients with index $k\in\Omega_h$.

\subsubsection{Orthogonality properties of the discrete spaces}
We first note that the basis functions $\varphi^N_k, k\in\Zd$ form an $L^2$-orthogonal system.
Indeed, we have, for $k, j\in\Zd$,
\begin{align}
	\left(\varphi^N_k, \varphi^N_j\right)_{L^2(\Rd)} &=
			(2\pi)^d\int_{\Rd} \hat\varphi^N_k(\omega) {\hat\varphi^N_j(\omega)}^*\dom 
			= (2\pi)^{d} (2N\pi)^{-2d}  \int_{D_N}\e^{-i\omega k/N}\cdot \e^{i\omega j/N}\dom \nonumber\\ 
			&= N^{-d} \int_{[-1/2,1/2]^d} \e^{-i 2\pi \omega (k-j)}\dom = N^{-d} \delta_{j,k},
\end{align}
where $\delta_{j,k}$ is a Kronecker delta on $\Zd$  and, as before, the asterisk
denotes the complex conjugate. 
As a useful consequence, the $L^2$-inner product of elements $v_h, w_h\in\V_h(\Rd)$
is the $\ell^2$-inner product of their coefficient vectors appropriately scaled.
Namely, for $v_h, w_h\in\V_h(\Rd)$,
\begin{equation}
	(v_h, w_h)_{L^2(\Rd)} = N^{-d} \sum\limits_{k\in\Zd} v_k w_k.
\end{equation}
The $L^2$-norm of $v_h$ can thus be written as
\begin{equation}
	\label{eq:sinc_L2_l2_identity}
	\Lt{v_h} = \Big(N^{-d} \sum\limits_{k\in\Zd} v_k^2\Big)^{1/2}.
\end{equation}

For a function $v_h\in\V_h(\Rd)$, we denote by $\vec v$ its coefficient
vector $(v_k)_{k\in\Zd}$,
which we can use to express \cref{eq:sinc_L2_l2_identity} as
\begin{equation}
	\Lt{v_h} = N^{-d/2}\norm{\vec v}_{\ell^2}
\end{equation}
where $\norm{\cdot}$ is the standard $\ell^2$-norm.

\subsubsection{Interpolation Operator}
We define an interpolation operator $\PiN$ via 
\begin{equation}
	\label{eq:def_interp_operator}
\PiN : C^{\infty}(\Rd) \cap L^2(\Rd) \longrightarrow \V_h(\Rd),\quad
	\big(\PiN v\big)(x) = \sum\limits_{k\in\Zd}v(k/N)\varphi^N_k(x).
\end{equation}
If $\supp\hat v\subseteq D_N$, then we have the pointwise equality
\begin{equation}
	\label{eq:interp_equal_bandlimited}
	\big(\PiN v\big)(x) = v(x),
\end{equation}
and if $\hat v$ decays exponentially, then the $\sinc$-interpolation
converges exponentially to the function in the $L^\infty$-norm, see \cite{Stenger2011}.
We provide error estimates in fractional Sobolev norms in \cref{sub:interpolation_errors_u,sub:approx_rhs}.

\begin{remark}
It is worth noting here that the discrete operator $\Phi^N$ 
	is the \emph{exact} fractional Laplacian on the space $\V_h(\Rd)$ in the sense
that for $v_h\in\V_h(\Rd)$
\begin{equation}
	\label{eq:flap_on_discr_space}
	\overrightarrow{\flap v_h} = \Phi^N \vec{v}.
\end{equation}
One can see this by first noting that if $\supp \hat{v}_h \subseteq D_N$, then
also $\supp \widehat{\flap v_h} \subseteq D_N$. Therefore, $\flap v_h$ can be reconstructed
exactly using the $\sinc$-interpolation operator in the sense of
\cref{eq:def_interp_operator,eq:interp_equal_bandlimited}, and the coefficients
are given by
\begin{equation}
	\flap v_h(x_j) = \left(\Phi^N \vec v\right)_j
\end{equation}
where $\Phi^N$ is the operator defined in \cref{eq:discrete_operator}. The useful identity
\begin{equation}
	N^{-d} (\Phi^N\vec v, \vec w)_{\ell^2} = (\flap v_h, w_h)_{L^2(\Rd)}\quad\forall w_h\in\V_h(\Rd)
\end{equation}
follows immediately.  As a special case, we obtain that
\begin{equation}
    N^{-d} (\vec f, \vec w)_{\ell^2} = (f_h, w_h)_{L^2(R^d)} = (\flap u_h, w_h)_{L^2(R^d)}
			= N^{-d} (\Phi^N \vec u, \vec w)_{\ell^2}\quad\forall w_h\in\V_h(\Rd)
\end{equation}
where $\vec u$ is the solution to the discrete system of equations \cref{eq:discrete_linear_system},
$u_h$ is the $\sinc$-function with coefficient vector $\vec u$ and
$f_h$ is a $\sinc$-function whose values at the grid points within $\Omega$ are precisely the entries 
of the right-hand vector $\vec f$ in the linear system.
\end{remark}

Different from \cite{ADS2021}, we consider the operator $\Phi^N$ here as an
operator on the infinite dimensional space $\V_h(\Rd)$ to simplify our
notation. The idea is that for any element $v_h \in\V_h(\Rd)$, the definition
\begin{equation}
	(\Phi^N\vec v)_j = \sum\limits_{k\in\Zd} v_k \Phi^N(j-k)
\end{equation}
is reasonable. For an implementation it is of course necessary that only finitely many of the $v_k$ are non-zero, for example due to a  truncation in the sense of \cref{eq:discrete_linear_system}.

To be able to reuse our implementations from \cite{ADS2021}, we assume in the following
$\Omega \subset [0,1)^d$, but point out that the method can be implemented on other cubes or rectangles
as well.

\subsubsection{Poincar\'e inequality and inverse estimates}
On the finite dimensional discrete space $\V_h(\Omega)$, we can establish a Poincar\'e inequality, thus ensuring that the bilinear form $a(\cdot, \cdot)$ is coercive on 
$\V_h(\Omega)$. 
\begin{lemma}[Poincar\'e inequality]
	\label{lemma:poincare}
	Let $\Omega\subset\Rd$ a bounded open set.
	Then, there exists a constant $0 < C\in\R$ such that for all $v_h\in\V_h(\Omega)$,
	\begin{equation}
		\label{eq:poincare}
		\Lt{v_h} \leq C \Hs{v_h}
	\end{equation}
	
	where $C = C(\Omega, d, s)$ depends
	on $\Omega, d$ and $s$,
	but not on $N$.
	
\end{lemma}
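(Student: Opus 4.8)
The plan is to exploit the Fourier-side description of both norms together with the fact that elements of $\V_h(\Omega)$ are supported (as coefficient sequences) in the bounded set $\Omega_h$, which forces a uniform lower bound on the relevant frequencies. Concretely, for $v_h = \sum_{k\in\Omega_h} v_k \varphi^N_k$ we have $\widehat{v_h}(\omega) = (2N\pi)^{-d}\chi_{D_N}(\omega)\sum_{k\in\Omega_h} v_k \e^{-\i\omega k/N}$, so $\widehat{v_h}$ is supported in the fixed cube $D_N = [-N\pi,N\pi]^d$. On its own that only gives an \emph{upper} bound $\Hs{v_h}\le (N\pi\sqrt d)^s \Lt{v_h}$, i.e. an inverse estimate in the wrong direction; the \Poincare\ inequality needs a lower bound on $\Hs{v_h}$ in terms of $\Lt{v_h}$, and the obstruction is that $|\omega|^{2s}\to 0$ as $\omega\to 0$, so one cannot simply bound $|\omega|^{2s}$ from below pointwise on $D_N$.

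The standard way around this is the classical argument for the fractional \Poincare\ inequality on a bounded domain, adapted to the discrete setting. First I would observe that every $v_h\in\V_h(\Omega)$ vanishes at all grid points $x_k = k/N$ with $k\notin\Omega_h$, in particular on $h\Zd\setminus\Omega_h$, and that $\Omega\subset[0,1)^d$ being bounded means the support of the coefficient sequence sits inside a bounded box independent of $N$ (after the normalization $\Omega\subset[0,1)^d$, inside $\{0,\dots,N-1\}^d$, i.e. within a box of side $1$). The key quantitative input is a Poincar\'e–Friedrichs-type inequality on $H^s(\Rd)$ for functions whose support is contained in a fixed bounded set $B\supseteq\overline\Omega$: there is $C=C(B,d,s)$ with $\Lt{w}\le C\,\Hs{w}$ for all $w\in H^s(\Rd)$ with $\operatorname{supp} w\subseteq B$. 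Such an inequality is well known (it follows, e.g., from the compact embedding $\widetilde H^s(B)\hookrightarrow L^2(B)$, or directly from the integral form of the seminorm by comparing $w(x)$ against values $w(y)$ for $y$ in the complement of $B$). However, elements of $\V_h(\Omega)$ do \emph{not} have support in $B$ — the $\sinc$-functions have unbounded support — so this cannot be applied to $v_h$ directly.

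To repair this, I would not work with $v_h$ itself but pass through the $\ell^2$–$L^2$ identity \cref{eq:sinc_L2_l2_identity} and a comparison function. The cleanest route: let $w$ be \emph{any} sufficiently nice function (say $w\in C_c^\infty$) supported in a fixed neighborhood $B$ of $\overline\Omega$ with $w(x_k)=v_k$ for all $k\in\Omega_h$ and $w(x_k)=0$ for the finitely many other grid points in $B$ — but this is awkward to control in $H^s$. A better route, and the one I expect the authors use, is to note that the bilinear form and the $L^2$-norm only see $\widehat{v_h}$ on $D_N$, and to split the frequency integral at a fixed radius $\rho>0$ independent of $N$: on $|\omega|\ge\rho$ we have $|\omega|^{2s}\ge\rho^{2s}$ so that part of $\Lt{v_h}^2$ is controlled by $\rho^{-2s}\Hs{v_h}^2$; on the low-frequency ball $|\omega|<\rho$ one must show $\int_{|\omega|<\rho}|\widehat{v_h}|^2 \le C\,\Hs{v_h}^2$. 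For the low-frequency part, use that $\sum_{k\in\Omega_h} v_k \e^{-\i\omega k/N}$, viewed as a trigonometric polynomial in $\omega$, vanishes in a controlled way — more precisely, because the coefficient sequence is supported in a box of bounded size, $\widehat{v_h}(\omega)$ is a smooth function of $\omega$ near $0$ whose $L^2$-norm on $B_\rho$ is comparable to that on an annulus by a Remez/Turán-type or a direct Taylor argument, and the annulus contribution is absorbed into the $|\omega|\ge\rho$ part. The main obstacle is precisely this low-frequency estimate: making rigorous, uniformly in $N$, that a band-limited function whose coefficient sequence is supported in a fixed box cannot concentrate its $L^2$-mass near $\omega=0$ without paying in the $|\omega|^{2s}$-weighted norm. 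I would handle it either via the compactness/Poincar\'e inequality for $\widetilde H^s(B)$ applied to the continuous extension obtained by a different (compactly supported) interpolation of the grid data $(v_k)$ — e.g. piecewise-linear or $B$-spline interpolation $w$, for which $\Lt{w}\simeq N^{-d/2}\|\vec v\|_{\ell^2}\simeq \Lt{v_h}$ and $\Hs{w}\lesssim$ something controllable — and then comparing $\Hs{w}$ with $\Hs{v_h}$, or by a scaling argument that reduces everything to $N=1$ on a fixed box where the finite-dimensional norm equivalence is automatic and then tracking how both sides scale under $x\mapsto Nx$ (both $\Lt{\cdot}^2$ and $\Hs{\cdot}^2$ pick up explicit powers of $N$, and one checks the inequality survives because the support box has fixed size, not size growing with $N$). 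Either way the conclusion $C=C(\Omega,d,s)$ independent of $N$ follows, and I would present the scaling version as the shortest.
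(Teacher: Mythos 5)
Your frequency-split at a fixed radius is the right skeleton, and it is indeed how the paper proceeds, but the low-frequency estimate — which you correctly flag as ``the main obstacle'' — is left unresolved: you list three possible attacks (a Remez/Tur\'an-type argument, comparison with a compactly supported spline interpolant, a rescaling to $N=1$) without carrying any of them out. The one you say you would present as the shortest, the scaling argument, does not work as described: under $x\mapsto Nx$ the coefficient sequence of $v_h$ is supported in $N\Omega_h\cap\Zd$, a box of side $\sim N$, so there is no fixed finite-dimensional space on which to invoke automatic norm equivalence, and the constant would a priori degenerate with $N$. The spline-comparison route would additionally require $\Hs{w}\lesssim\Hs{v_h}$ for the interpolant $w$, which is not obvious and is not sketched.

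The paper's resolution is much more elementary and you missed it. Aim not at bounding $\int_{B_\eps}|\hat v_h|^2$ by $\Hs{v_h}^2$, but by a \emph{small multiple of} $\Lt{v_h}^2$ that can be absorbed into the left-hand side. Since $\Omega$ is bounded, $|\Omega_h|\le C_\Omega N^d$, and Cauchy--Schwarz on the finite sum $\hat v_h(\omega)=(2N\pi)^{-d}\sum_{k\in\Omega_h}v_k\e^{-\i\omega k/N}$ gives the pointwise bound $|\hat v_h(\omega)|^2\le (2\pi)^{-2d}C_\Omega\Lt{v_h}^2$ uniformly in $\omega$ and $N$. Hence $\int_{B_\eps}|\hat v_h|^2\le \eps^d|B_1|\,(2\pi)^{-2d}C_\Omega\Lt{v_h}^2$, while $\int_{B_\eps^c}|\hat v_h|^2\le\eps^{-2s}\Hs{v_h}^2$ as in your high-frequency step; choosing $\eps$ small enough (depending only on $\Omega$, $d$, $s$) and rearranging yields the claim. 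Note also that the cardinality bound uses only that $\Omega$ is bounded — your repeated appeal to the normalization $\Omega\subset[0,1)^d$ is not needed for this lemma.
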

\begin{proof}
	 Let $\Omega_h \coloneqq \{k\in\Zd : k/N\in\Omega\}$. 
	 As $\Omega$ is bounded, there exists a constant $C_\Omega > 0$ such that
	\[
		\left|\{k\in\Zd : k/N\in\Omega\}\right| = \#\text{Elements in }\Omega_h \leq C_\Omega N^d.
	\]
	Let $v_h = \sum\limits_{k\in\Omega_h} v_k\varphi^N_k\in\V_h(\Omega)$.
	For $\omega\in D_N$, we compute
	 
	\begin{align}
	\abs{\hat v_h(\omega)}^2 &= \Big|
	 (2N\pi)^{-d}\sum\limits_{k\in \Omega_h} v_k \cdot \e^{-\i \omega k/N}
	\Big|^2  \nonumber\\
	&  \leq (2N\pi)^{-2d} 
		\cdot \Big(\sum\limits_{k\in \Omega_h} \underbrace{\e^{-\i \omega k/N}\cdot \e^{\i \omega k/N}}_{=1}\Big)\Big(\sum\limits_{k\in \Omega_h} v_k^2 \Big)
		\nonumber\\
	&\leq (2\pi)^{-2d} N^{-d} \left(C_\Omega\cdot N^{d}\right) \cdot N^{-d}\sum\limits_{k\in\Omega_h} v_k^2 \nonumber\\
	&= \frac{C_\Omega}{(2\pi)^{2d}}\Lt{v_h}^2\label{eq:pointwise_bound_vhat}
	\end{align}
	
	where Cauchy-Schwarz inequality was used in the second line
	and \cref{eq:sinc_L2_l2_identity} for the last line.
	With this,
	proceed exactly as in the first proof the authors provide for 
	\cite[Theorem 3.7]{Covi2020}:
	For $\eps > 0$, we have
	\begin{align}
		\frac{1}{(2\pi)^d} \Lt{v_h}^2 &= \int_{B_\eps} \abs{\hat{v_h}(\omega)}^2\dom
				+ \int_{B_\eps^c} \abs{\hat{v_h}(\omega)}^2\dom \\
				&\leq \eps^{d}\abs{B_1(0)} \frac{C_\Omega}{(2\pi)^{2d}} \Lt{v_h}^2
						+ \eps^{-2s} \Hs{v_h}^{2}.
	\end{align}
	Rearranging  and choosing an appropriate $\eps$ yields the desired result.
\end{proof}

Besides the Poincar\'e inequality, we also have the following \emph{inverse estimate}. 
\begin{lemma}
	\label{lemma:inverse_estimate}
	Let $v_h\in\V_h(\Rd)$, then there is a constant $C>0$ independent of $h$, such that
	\begin{equation}
		\label{eq:Hs_bound_L2_sinc}
		\Hs{v_h} \leq C N^{s}\Lt{v_h} .
	\end{equation}
\end{lemma}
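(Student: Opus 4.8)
The plan is to exploit the fact that every element of $\V_h(\Rd)$ is band-limited, i.e.\ its Fourier transform is supported in the cube $D_N = [-N\pi, N\pi]^d$. Once the frequency content is confined to $D_N$, the weight $\abs{\omega}^{2s}$ appearing in the definition \cref{eq:def_frac_seminorm} of the $H^s$-seminorm is uniformly bounded on the support of $\hat v_h$, so it can simply be pulled out of the integral, leaving behind the $L^2$-norm via Plancherel.

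Concretely, first I would recall that for $v_h = \sum_{k\in\Zd} v_k\varphi^N_k \in\V_h(\Rd)$ one has, from the formula for $\widehat{\varphi^N_k}$,
\begin{equation}
	\hat v_h(\omega) = (2N\pi)^{-d}\,\chi_{D_N}(\omega)\sum_{k\in\Zd} v_k\,\e^{-\i\omega k/N},
\end{equation}
so that $\supp\hat v_h\subseteq D_N$; since $v_h\in H^s(\Rd)$ by definition of $\V_h(\Rd)$, the seminorm is finite and all integrals below are legitimate. Next, on $D_N$ we have the elementary bound $\abs{\omega}\le \sqrt d\,N\pi$, hence $\abs{\omega}^{2s}\le (\sqrt d\,\pi)^{2s} N^{2s}$ for every $\omega\in\supp\hat v_h$.

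The estimate then follows in one line: using \cref{eq:def_frac_seminorm},
\begin{equation}
	\Hs{v_h}^2 = (2\pi)^d\int_{D_N}\abs{\omega}^{2s}\abs{\hat v_h(\omega)}^2\dom
		\le (\sqrt d\,\pi)^{2s} N^{2s}\,(2\pi)^d\int_{D_N}\abs{\hat v_h(\omega)}^2\dom
		= (\sqrt d\,\pi)^{2s} N^{2s}\,\Lt{v_h}^2,
\end{equation}
where the last equality is Plancherel's identity. Taking square roots gives \cref{eq:Hs_bound_L2_sinc} with the explicit constant $C = (\sqrt d\,\pi)^s$, which is independent of $h = 1/N$.

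There is essentially no serious obstacle here; the only point requiring a modicum of care is bookkeeping — making sure the normalization constants in the chosen Fourier convention and in the definition of $\Hs{\cdot}$ are handled consistently, and noting that the membership $v_h\in H^s(\Rd)$ built into the definition of $\V_h(\Rd)$ is what guarantees the quantities involved are finite so that the manipulation is valid. One could alternatively phrase the argument purely in terms of coefficients using \cref{eq:sinc_L2_l2_identity}, but the Fourier-side computation above is the cleanest route.
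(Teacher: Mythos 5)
Your proof is correct and follows essentially the same route as the paper: both use that $\supp\hat v_h\subseteq D_N$, bound $\abs{\omega}^{2s}\le(\sqrt d\,N\pi)^{2s}$ on $D_N$, pull the weight out of the integral, and conclude via Plancherel. The only difference is cosmetic bookkeeping of the normalization constants, which you in fact track more carefully than the paper does.
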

\begin{proof}
	This is a direct consequence of the fact the Fourier transformations of the functions
	in $\V_h$ have bounded support, noting
	\begin{align}
		\Hs{v_h}^2 &= (2\pi)^d\int_{D_N} \abs{\omega}^{2s}\abs{\hat v_h}^2\dom \nonumber\\
			&\leq (2\pi)^d\sup\limits_{\omega\in D_N}\abs{\omega}^{2s}\cdot\int_{D_N} \abs{\hat v_h}^2\dom
	    =(2\pi)^d \left(\sqrt{d}N\pi\right)^{2s} \Lt{v_h}^2.
	\end{align}
	Taking the square-root on both sides gives the desired result.
\end{proof}

\subsection{\texorpdfstring{$\sinc$}{sinc}-Galerkin method}
As in \cref{sub:weak_formulation_bvp}, we can define a Galerkin method on the space
$\V_h$ and aim to find a solution $u_h \in\V_h(\Omega)$ which fulfills
\begin{equation}
	\label{eq:discrete_galerkin_equation}
	a(u_h, v_h) = F_h(v_h)\quad\forall v_h\in \V_h.
\end{equation}
The bilinear form $a(\cdot,\cdot)$ is defined as in \cref{eq:def_a_on_V}
for the elements of $\V$. If we assemble the system matrix for the $\sinc$-Galerkin
method, we see that for two basis functions $\varphi^N_k, \varphi^N_j$, we have
\begin{align}
	a(\varphi^N_k, \varphi^N_j) &= 	 \int\limits_\Rd \halfflap \varphi^N_k\cdot \halfflap \varphi^N_j\dx \nonumber\\
			&=  (2\pi)^d \int_{\R^d} |\omega|^{2s} \hat\varphi^N_k(\omega) \left(\hat\varphi^N_j(\omega)\right)^*\dom \nonumber\\
			&= (2\pi)^d (2\pi N)^{-d} (2\pi N)^{-d} \int_{D_N} |\omega|^{2s}\e^{-\i \omega (k-j)/N}\dom \nonumber\\
			&= N^{-d} \Phi^N(k-j)
\end{align}
and note that -- up to scaling -- these are exactly the entries of the discrete operator $\Phi^N$ 
the authors obtained in \cite{ADS2021} for the discrete operator $\Phi^N$.
\emph{In other words, we can formulate the Galerkin method equivalently to the collocation
method in the sense that we obtain the same system matrix up to a 
scaling factor $N^{-d}$}. As a consequence, we can use the techniques shown in
\cite{ADS2021} to assemble the system matrix and solve the resulting system
efficiently.

The linear form $F_h$ requires some further discussion.
Problem \cref{eq:bvp} is posed on a bounded domain $\Omega\subset\Rd$
and extended with Dirichlet boundary conditions on $\Omega^c$. To be able to use
Fourier methods, one would however need to treat it as a problem on the
entire $\Rd$. In principle, the best choice of the right hand side would be given as follows.
For the solution $u$ of \cref{eq:bvp}
we take $\hat f$ as the Fourier transformation of its fractional
Laplacian, i.e., 
\begin{equation}
	\label{eq:def_fhat}
	\hat f(\omega) \coloneqq |\omega|^{2s} \hat u(\omega).
\end{equation}
Then, we set the right hand side in \cref{eq:discrete_galerkin_equation} to
\begin{equation}
	\label{eq:Fh_proj_f_varphiNk}
	F_h(\varphi^N_k) = (f, \varphi^N_k)_{L^2(\Rd)} = N^{-d} \int_{D_N} \hat f \e^{\i\omega k/N}\dom.
\end{equation}
This is in principle well defined, but not computable. To implement \cref{eq:Fh_proj_f_varphiNk}, we
would need to know the analytical solution $u$, as otherwise we only know the values of the inverse Fourier transform of $\hat f$ inside of $\Omega$ through the given right hand side $f$. 

Defining the right-hand side this way is in line with extending $f$ with the
nonlocal derivative of $u$ outside of $\Omega$, defined as
\begin{equation}
	\Ns u(x) =  C(d,s)\int_\Omega \frac{u(x) - u(y)}{\abs{x-y}^{d+2s}}\dy
	= -C(d,s)\int_\Omega \frac{u(y)}{\abs{x-y}^{d+2s}}\dy, \quad \text{for } x\in \R^d\setminus \bar\Omega,
		u\in\tildeHsOm
\end{equation}
where $C(d,s)$ is the constant from \cref{eq:integral_def_flap}.  As we are solving the Dirichlet problem, with $u$ necessarily vanishing outside of $\Omega$ for our approximation to be valid, i.e., for $u\in \tildeHsOm$, the term
$\Ns u$ is simply the fractional Laplacian of $u$ evaluated outside of $\Omega$. 
For $u\in \tildeHsOm, v\in H^s(\Rd)$, the bilinear form $a(\cdot,\cdot)$ can be written as
\begin{equation}
	a(u, v) = \frac{C(d,s)}{2}  \iint\limits_{(\Rd\times\Rd)\setminus (\Omega^c\times\Omega^c)} \frac{(u(x)-u(y))\,(v(x)-v(y))}{\abs{x-y}^{d+2s}}\dx\dy,
\end{equation}
which, using the integration by parts formula \cite{Dipierro2017}, can be written as
\begin{equation}
	a(u,v) = \int_\Omega v(x) \flap u(x)\dx + \int_{\Omega^c} v(x) \Ns u(x)\dx
		= \int_{\Rd} v(x) \flap u(x)\dx ,
\end{equation}
Since $f$ is defined in $\Omega$, an extension of $f$ to 
outside $\Omega$ is given by $\mathcal{N}_s u(x)$.

Notice that, if the right-hand side $\hat f$ defined in \cref{eq:def_fhat} 
were to decay sufficiently fast, i.e.,  if
\begin{equation}
    \label{eq:def_f_bar}
	\bar f(x) \coloneqq \IFT \hat f = \begin{cases}
		f(x)&\text{if }x\in\Omega \\
		\Ns u(x)&\text{otherwise}
	\end{cases}
\end{equation}
was globally sufficiently smooth, the approximation 
\begin{equation}
	(\hat f, \hat\varphi^N_k)_{L^2} 
	= N^{-d}\int_{D_N} \hat f(\omega) \e^{\i\omega k/N}\dom \approx N^{-d}\int_{\Rd} \hat f(\omega) \e^{\i\omega k/N}\dom
	= N^{-d} f(k/N),
\end{equation}
which requires only the known values of $f$ inside of $\Omega$,
is valid. However, we can not expect such global smoothness of $\bar f$.

The above remark shows that some more effort is required to obtain a usable Galerkin method for our fractional PDE.
We need to obtain a suitable approximation $f_h \in\V_h(\Rd)$ of $\bar f$,
so that we can define the linear form
$F_h$ through
\begin{equation}
	F_h(\varphi^N_k) = (f_h, \varphi^N_k)_{L^2(\Rd)}.
\end{equation}
This can be computed with no additional effort once the coefficients $f_j, j\in\Z^d$
of $f_h = \sum_{j\in\Zd}f_j\varphi^N_j(x)$ are known, since
\begin{equation}
	F_h(\varphi^N_k) = (f_h, \varphi^N_k)_{L^2(\Rd)} 
		= \left(\sum_{j\in\Zd}f_j\varphi^N_j(x), \varphi^N_k\right)_{L^2(\Rd)}
		= \sum\limits_{j\in\Z^d}
			f_j \cdot \underbrace{\left(\varphi^N_k, \varphi^N_j\right)_{L^2(\Rd)}}_{=N^{-d}\delta_{j,k}}
		= N^{-d} f_k.
\end{equation}
In \cref{sub:approx_rhs} we show
how a suitable discrete right-hand side $f_h \in \V_h(\Rd)$ can be computed.

\subsection{Abstract error estimate}
\label{sub:abstract_error_estimate}
Notice that $\V_h(\Omega) \not\subset\V$
and even $\V_h(\Omega)\cap\V = \{0\}$ because the elements of $\V_h$ have unbounded support 
in contrast to the elements of $\V$ whose support is in $\overline \Omega$.
 Therefore,  C\'ea's theorem is not applicable and we follow \cite[Chapter 10]{Brenner2007} to obtain an abstract error 
estimate of Strang type.
\begin{theorem}[{\cite[Lemma 10.1.1]{Brenner2007}}]
	\label{thm:abstract_error}
	Let $V$ and $V_h$ be subspaces of a Hilbert space $H$. 
	Assume that $a(\cdot, \cdot)$ is a continuous bilinear form
	on $H$ which is coercive on $V_h$, with continuity and coercivity
	constants $C$ and $\gamma$, respectively. Let $u\in V$ and $u_h\in V_h$ solve
	\begin{equation}
		\label{eq:strang_continuous_equation}
		a(u, v) = F(v)\quad\forall v\in V
	\end{equation}
	and
	\begin{equation}
		\label{eq:strang_discrete_equation}
		a(u_h, v_h) = F_h(v_h)\quad\forall v\in V_h .
	\end{equation}
	Then
	\begin{equation}
	    \label{eq:full_strang_error_equation}
		\norm{u-u_h}_H \leq \left( 1 + \frac{C}{\gamma}  \right) \inf\limits_{v_h\in V_h} \norm{u - v_h}_H
			+ \frac{1}{\gamma}  \sup\limits_{w_h\in V_h\setminus\{0\}} \frac{\abs{a(u-u_h, w_h)}}{\norm{w_h}_H}.
	\end{equation}
\end{theorem}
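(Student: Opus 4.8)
The plan is to follow the classical Strang-lemma argument almost verbatim, the only conceptual twist being that Galerkin orthogonality is \emph{not} available here (since $V_h\not\subset V$, the function $w_h$ used as a test function need not be admissible in \cref{eq:strang_continuous_equation}), so the consistency term $a(u-u_h,w_h)$ cannot be dropped and is instead carried through to the final bound. This is exactly the term that will later have to be estimated by hand for our concrete $\sinc$-spaces.

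First I would fix an arbitrary $v_h\in V_h$, split the error as $u-u_h=(u-v_h)+(v_h-u_h)$, and set $w_h\coloneqq v_h-u_h\in V_h$. If $w_h=0$ the asserted inequality is trivial, so assume $w_h\neq 0$. Testing the bilinear form with $w_h$ and using coercivity of $a(\cdot,\cdot)$ on $V_h$ together with bilinearity gives
\[
\gamma\,\norm{w_h}_H^2\ \le\ a(w_h,w_h)\ =\ a(v_h-u,w_h)+a(u-u_h,w_h).
\]
Then I would bound the two summands separately: continuity of $a$ on $H$ yields $\abs{a(v_h-u,w_h)}\le C\,\norm{u-v_h}_H\,\norm{w_h}_H$, while by definition of the supremum appearing in the statement $\abs{a(u-u_h,w_h)}\le\big(\sup_{z_h\in V_h\setminus\{0\}}\abs{a(u-u_h,z_h)}/\norm{z_h}_H\big)\norm{w_h}_H$. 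Dividing by $\gamma\,\norm{w_h}_H>0$ gives
\[
\norm{w_h}_H\ \le\ \frac{C}{\gamma}\,\norm{u-v_h}_H+\frac{1}{\gamma}\sup_{z_h\in V_h\setminus\{0\}}\frac{\abs{a(u-u_h,z_h)}}{\norm{z_h}_H}.
\]
Finally the triangle inequality $\norm{u-u_h}_H\le\norm{u-v_h}_H+\norm{w_h}_H$ produces the claimed estimate with this particular $v_h$ in place of the infimum; since the supremum term does not depend on $v_h$, taking the infimum over all $v_h\in V_h$ finishes the argument.

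Honestly there is no real obstacle in this proof — it is three lines of triangle inequality, coercivity, and continuity. The only point worth emphasizing is \emph{why} the second term must be kept: rewriting $a(u-u_h,w_h)=a(u,w_h)-F_h(w_h)$ exhibits it as a combined consistency/quadrature error, which vanishes in the conforming setting of C\'ea's theorem but is genuinely present for the nonconforming $\sinc$-Galerkin method and will be controlled separately later in the paper.
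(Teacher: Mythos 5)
Your argument is correct and is precisely the classical Strang-type proof that the paper itself does not reproduce but instead cites from \cite[Lemma~10.1.1]{Brenner2007} (and \cite[Lemma~27.15]{Ern2021}): split via an arbitrary $v_h$, apply coercivity on $V_h$ to $w_h=v_h-u_h$, bound the two resulting terms by continuity and by the definition of the supremum, and finish with the triangle inequality and the infimum over $v_h$. Your closing remark about why the consistency term cannot be dropped in the nonconforming setting matches exactly how the paper uses this estimate in \cref{sub:abstract_error_estimate}.
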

 Proofs can be found, e.g., in \cite[Lemma 27.15]{Ern2021} or \cite[Lemma 10.1.1]{Brenner2007}.  As usual, we refer to the first
term on the right hand side as the \emph{approximation error} and to
the second term as the \emph{consistency error}.
As the underlying Hilbert space $H$, we choose $H \coloneqq H^{s}(\Rd)$. The
spaces $V$ and $V_h$ are as defined above,
namely $V = \V$ and $V_h = \V_h(\Omega)$. The bilinear form $a(\cdot, \cdot)$
is obviously continuous on $\V$ and coercive on $\V_h(\Omega)$ as a direct consequence of \Cref{lemma:poincare}.

Using that $a(u - u_h,v) = (\flap (u-u_h), v)_{L^2(\Rd)}$,
$\flap u = \bar f$ on $\Rd$, see \cref{eq:def_f_bar,eq:strang_discrete_equation},
it immediately follows that the numerator of the consistency term can be written as
\begin{equation}
	a(u-u_h, w_h) = (\bar f - f_h, w_h)_{L^2(\Rd)}.
\end{equation}
As $w_h\in\V_h(\Omega)$, $(\varphi^N_j, w_h)_{L^2(\Rd)} = 0$ if $j/N\not\in\Omega$.
Therefore, it is sufficient that the coefficients of the approximation $f_h$ match the coefficients
of the right-hand side of our linear system of equations in $\Omega$ and the coefficients outside of $\Omega$ are immaterial. Given such
a $\sinc$-approximation $f_h$ of $\bar f$  (which is yet to be constructed) we
continue the above computation  via
\begin{equation}
a(u-u_h, w_h) = \langle \overline f - f_h, w_h \rangle_{H^{-s}(\mathbb{R}^d) \times H^{s}(\mathbb{R}^d)} \le \| \overline f - f_h \|_{H^{-s}(\mathbb{R}^d)} \| w_h\|_{H^{s}(\mathbb{R}^d)}.
\end{equation}
with $\norm{\bar f}_{H^{-s}(\Rd)} = \norm{(1+\abs{\omega})^{-s/2})\hat f}_{L^2(\Rd)}$ see \cite{Dinezza2012}.
With this, bounding the consistency error
reduces to finding a $\sinc$-function $f_h$ which approximates $\bar f$ in the
$\Hms{\,\cdot\,}$-norm and whose coefficients we can actually compute for $k\in\Omega_h$.

In summary, finding the error reduces to 
\begin{itemize}
\item[(i)] showing that the true solution can
actually be approximated in the discrete space and
\item[(ii)] finding an
approximation of the right-hand side in the $\sinc$-basis.
\end{itemize}

\section{Error bounds}
In this section, we bound the individual terms in \cref{eq:full_strang_error_equation} to obtain optimal rates of convergence in the energy norm.
\label{sec:approximation_errors}
\subsection{\texorpdfstring{$\sinc$-Interpolation in $\widetilde H^{s}(\Omega)$}{sinc-Interpolation in tildeHs(Omega)}}
\label{sub:interpolation_errors_u}
We need to find a function $v_h\in\V_h(\Omega)$
that suitably approximates $u$ solving \cref{eq:bvp} in $\Hsnorm{\,\cdot\,}$.
We use this element to bound the approximation error in \Cref{thm:abstract_error}.
Our overall strategy is
be as follows. We compute a smooth approximation of $u$ via mollification, interpolate
this smooth approximation using the interpolation operator $\PiN$ defined in 
\eqref{eq:def_interp_operator} and restrict
the thus obtained element of $\V_h(\Rd)$ to an element of $\V_h(\Omega)$ via the
restriction operator defined in \cref{eq:def_discrete_restriction}.
Subsequently, we establish the optimal rate of convergence.

For a standard mollifier $\mollif(x)$, with $\supp\mollif\subset B_1(0)$,
let $\mollif_\eps(x) \coloneqq \eps^{-d} \mollif(x/\eps)$ and define the smoothing operation
through convolution, i.e., for a solution $u$ of \cref{eq:bvp}, we set
$u_\eps \coloneqq \mollif_\eps * u$ and immediately have $\FT\left\{\mollif_\eps * u\right\} = \hat{\mollif}_\eps\hat u$
where $\hat {\mollif}_\eps(\omega) = \hat\mollif(\eps\omega)$.

Given the results in \cref{sub:summary_regularity_results}, we may assume that $u$ has some additional regularity above $H^s(\R^d)$
so take $u \in \tildeHtOm \subset \tildeHsOm$ for some $t > s$.
Additionally, we may assume $\abs{u(x)} \leq \cbound \dist(x,\delOm)^s$
near the boundary
$\delOm$ of $\Omega$. See especially \Cref{rem:def_cbound_csob}.

\begin{lemma}
	\label{lemma:sinc_interp_smoothing}
	Let $u\in\widetilde{H}^t(\Omega)$ for $t > s$. Then
	\begin{enumerate}[(i)]
		\item\label{interp_smooth_i} For $\eps > 0$ and $t-s < 2$, we have
			\[
				\Hs{u-u_\eps} \leq \normalfont{\ci} \eps^{t-s} \Ht{u}
			\]
		\item\label{interp_smooth_ii} Choose $\eps = 1/N$, then
			\[
				\Hs{u_\eps - \PiN{u_\eps}} \leq \normalfont{\cii} \eps^{t-s}\Ht{u}.
			\]
	\end{enumerate}
	Where $\normalfont{\ci,\cii}$ are constants that depend only on
	$d, s$ and $\eta$, but not on $\Omega$.
\end{lemma}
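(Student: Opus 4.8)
The plan is to work entirely on the Fourier side, where both claims become weighted $L^2$-estimates on $|\omega|^{2s}|\hat u(\omega)|^2$. For part~\eqref{interp_smooth_i}, write $\widehat{u-u_\eps}(\omega) = (1-\hat\mollif(\eps\omega))\hat u(\omega)$, so that
\[
  \Hs{u-u_\eps}^2 = (2\pi)^d \int_\Rd |\omega|^{2s}\,|1-\hat\mollif(\eps\omega)|^2\,|\hat u(\omega)|^2\dom.
\]
The key pointwise bound is $|1-\hat\mollif(\eps\omega)| \le C\min\{1,(\eps|\omega|)^{t-s}\}$, valid for $0 \le t-s \le 2$: near $\omega=0$ this follows because $\hat\mollif$ is smooth with $\hat\mollif(0)=1$ and (by symmetry of a standard mollifier) vanishing gradient at the origin, giving $|1-\hat\mollif(\eps\omega)| \lesssim (\eps|\omega|)^2 \le (\eps|\omega|)^{t-s}$ when $\eps|\omega|\le 1$; for $\eps|\omega|\ge 1$ one just uses boundedness of $\hat\mollif$. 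Substituting $|1-\hat\mollif(\eps\omega)|^2 \le C\eps^{2(t-s)}|\omega|^{2(t-s)}$ and absorbing $|\omega|^{2s}|\omega|^{2(t-s)} = |\omega|^{2t}$ yields $\Hs{u-u_\eps}^2 \le \ci^2\,\eps^{2(t-s)}\,\Ht{u}^2$, which is the claim.

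For part~\eqref{interp_smooth_ii}, first observe that $u_\eps = \mollif_\eps * u$ is smooth and in $L^2$, so $\PiN u_\eps$ is well-defined. The clean way to compute $\widehat{\PiN u_\eps}$ is via the Poisson summation formula (\Cref{thm:poisson_summation}): since the coefficients of $\PiN u_\eps$ are the samples $u_\eps(k/N)$, its Fourier transform is, up to the normalization constants of our conventions, the periodization $\sum_{m\in\Zd}\hat u_\eps(\omega + 2\pi N m)$ restricted to $D_N = [-N\pi,N\pi]^d$. (One must check the decay hypothesis of the PSF; this holds because $\hat u_\eps(\omega) = \hat\mollif(\eps\omega)\hat u(\omega)$ and $\hat\mollif$ is Schwartz, so $\hat u_\eps$ decays rapidly, while $u_\eps$ is bounded with enough decay coming from $u\in L^2$ together with smoothness — alternatively one approximates and passes to the limit.) Therefore on $D_N$ we have $\widehat{\PiN u_\eps}(\omega) - \hat u_\eps(\omega) = \sum_{m\neq 0}\hat u_\eps(\omega+2\pi N m)$, and outside $D_N$ we have $\widehat{\PiN u_\eps} = 0$ while $\hat u_\eps$ may be nonzero. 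Hence
\[
  \Hs{u_\eps-\PiN u_\eps}^2 \le C\!\int_{D_N}\!|\omega|^{2s}\Big|\sum_{m\neq 0}\hat u_\eps(\omega+2\pi N m)\Big|^2\!\dom
    + C\!\int_{D_N^c}\!|\omega|^{2s}|\hat u_\eps(\omega)|^2\dom.
\]
The second (tail) term is bounded by $\sup_{\omega\in D_N^c}|\omega|^{-2(t-s)}\int_{D_N^c}|\omega|^{2t}|\hat u_\eps|^2 \le (N\pi)^{-2(t-s)}\Ht{u}^2 \lesssim \eps^{2(t-s)}\Ht{u}^2$ using $\eps=1/N$ and $|\hat u_\eps|\le|\hat u|$. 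For the first (aliasing) term, on $D_N$ and for $m\neq 0$ one has $|\omega + 2\pi N m| \ge \pi N|m|$ (roughly), so $|\hat u_\eps(\omega+2\pi N m)| \le |\hat\mollif(\eps(\omega+2\pi N m))|\,|\hat u(\omega+2\pi N m)|$; the Schwartz decay of $\hat\mollif$ together with $\eps|\omega+2\pi N m| \ge c|m|$ gives a factor that decays faster than any power of $|m|$, which both makes the sum over $m$ summable (via Cauchy–Schwarz in $m$ against a convergent series) and supplies the needed $\eps^{2(t-s)}$ after summing $\int_{D_N}|\omega + 2\pi N m|^{2t}|\hat u|^2$ over a partition of $\Rd\setminus D_N$ into translated copies of $D_N$. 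Collecting, $\Hs{u_\eps-\PiN u_\eps}^2 \le \cii^2\,\eps^{2(t-s)}\Ht{u}^2$.

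The main obstacle is the aliasing term in~\eqref{interp_smooth_ii}: one needs to carefully exploit the rapid decay of $\hat\mollif$ to control the periodization series and to convert the shifted integrals $\int_{D_N}|\hat u_\eps(\omega + 2\pi N m)|^2\dom$ back into pieces of $\Ht{u}^2$ with the correct power of $N$. The bookkeeping — making the constants depend only on $d,s,\eta$ and not on $\Omega$ — requires keeping the weight $|\omega|^{2s}$ on $D_N$ bounded by $(\sqrt d N\pi)^{2s}$ and trading two powers against $|\omega + 2\pi N m|^{-2t}$, exactly as in the inverse estimate \Cref{lemma:inverse_estimate}. A secondary technical point is justifying the PSF representation of $\widehat{\PiN u_\eps}$; if one prefers to avoid the decay hypothesis one can instead estimate $u_\eps - \PiN u_\eps$ directly by splitting $\hat u_\eps$ into its restriction to $D_N$ (reproduced exactly by $\PiN$ in the sense of \cref{eq:interp_equal_bandlimited}) plus a high-frequency remainder, reducing~\eqref{interp_smooth_ii} essentially to an interpolation-stability estimate plus the tail bound already handled above.
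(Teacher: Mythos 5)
Your proposal is correct and follows essentially the same route as the paper: part (i) via the Fourier multiplier bound $|1-\hat\mollif(\eps\omega)|\lesssim\min\{1,(\eps|\omega|)^{t-s}\}$ obtained from the Taylor expansion of $\hat\mollif$ at the origin (using symmetry to kill the gradient term, which is exactly where $t-s<2$ enters), and part (ii) via the Poisson summation formula to expose the aliasing series on $D_N$, Cauchy--Schwarz against the rapidly decaying $\hat\mollif$ to control the sum, and reassembly of the shifted integrals into the tail $\int_{D_N^c}|\omega|^{2t}|\hat u|^2$. The only differences are bookkeeping (the paper folds the tail and aliasing contributions into a single $\int_{D_N^c}|\omega|^{2s}|\hat u|^2$ before trading powers of $|\omega|$ for $N^{2(s-t)}$, and justifies the sum--integral exchange by monotone convergence), so no further comparison is needed.
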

\begin{proof}
	(\ref{interp_smooth_i}) 
	 The same result, but with different arguments, can be found in \cite[Theorem 2]{Heltai2020}.
	For the readers convenience, we include a simple proof here. 
	A direct computations shows that
 \begin{align*}
	 \Hs{u-u_\eps}^2 & = \int_{\mathbb{R}^d} \abs{\omega}^{2s} (1-\hat\eta(\eps \omega))^{2s} \abs{\hat u(\omega)}^{2} d\omega  =  \eps^{2t-2s} \int_{\mathbb{R}^d} \abs{\eps \omega}^{2s-2t} (1-\hat\eta(\eps \omega))^{2s} \abs{\omega}^{2t} \abs{\hat u(\omega)}^{2} d\omega \\
	 & \le   \eps^{2t-2s} \left(\sup_\omega  \abs{\eps\omega}^{2s-2t}(1-\hat\eta(\eps \omega))^{2s}\right) \int_{\mathbb{R}^d} \abs{\omega}^{2t} \abs{\hat u(\omega)}^{2} d\omega \\
& = C_{(i)} \eps^{2t-2s} | u |_{H^t(\mathbb{R}^d)}^2 ,
\end{align*}
	
	taking the square root on both sides yields the desired result provided that the constant $\ci$ is finite.
	For $\abs{\omega}\longrightarrow\infty$, this is clear when $t > s$, so we only need to
	show that this is also the case near $0$. This holds true under the assumption that $t - s < 2$. Indeed, as $\mollif$ is a smooth and symmetric mollifier, $\hat \mollif$ is smooth and
	real and, thus, has a Taylor expansion
	\begin{equation}
	    \hat\mollif(\omega) = \hat\mollif(0) + \nabla_\omega\hat\mollif(0)\cdot \omega + \mathcal{O}(\abs{\omega}^2)
	\end{equation}
	around $0$. The partial derivatives of $\hat\mollif$ are given by
	\begin{equation}
	    \frac{\partial}{\partial\omega_j} \hat\mollif(\omega) =
	        -\i\int_\Rd x_j \mollif(x) \e^{-\i\omega x}\dx \overset{\omega=0}{=}
	        -\i\int_\Rd x_j \mollif(x) \dx = 0.
	\end{equation}
	The last equality holds because $\eta$ symmetric, thus $\hat\eta$ is symmetric.
	As this holds for $j = 1,\ldots,d$, we have $\nabla_\omega\hat\mollif(0)\cdot\omega = 0 \;\forall\omega$. This, along with $\hat\mollif(0) = \int_\Rd\mollif(x)\dx = 1$ allows
	us to show that 
	\begin{equation}
	    1 - \hat\mollif(\omega) =
	    1 - \big(\hat\mollif(0) + \nabla_\omega\hat\mollif(0)\cdot \omega + \mathcal{O}(\abs{\omega}^2)\big) 
			= \mathcal{O}(\abs{\omega}^2),
	\end{equation}
	and we obtain
	\begin{equation}
	    |\omega|^{2s-2t} (1-\hat\mollif(\omega))^2 \leq C |\omega|^{2s-2t} \abs{\omega}^4
	    < \infty\;\Leftrightarrow\; t-s < 2
	\end{equation}
	around $\omega = 0$.

	(\ref{interp_smooth_ii})
 Next, using $\PiN u_\eps \in \V_h(\Rd) \Rightarrow \supp{\widehat{\PiN u_\eps}} \subseteq D_N$, we obtain
	\begin{equation}
		\label{eq:interp_term_split}
		\Hs{u_\eps - \PiN u_\eps}^2 = 
		\underbrace{\int_{D_N} |\omega|^{2s} |\hat{u}_\eps - \widehat{\PiN{u_\eps}}|^2\dom}_{\text{(I)}}
			+ \underbrace{\int_{D_N^c} |\omega|^{2s}|\hat{u}_\eps|^2\dom}_{\text{(II)}}.
	\end{equation}
	We begin with the second-term and obtain
	\begin{align}
		\text{(II)} = \int_{D_N^c} |\omega|^{2s}|\hat{u}_\eps|^2\dom &\leq
				\int_{D_N^c} |\omega|^{2s} \abs{\hat\mollif(\eps\omega)}^2\abs{\hat{u}}^2\dom
			\nonumber\\
			&\leq \underbrace{\sup\limits_{\omega\in D_N^c} \abs{\hat\mollif(\eps\omega)}^2}_{\leq 1}
				\cdot \int_{D_N^c} |\omega|^{2s} \abs{\hat{u}}^2\dom , \label{eq:interp_err_term_I}
	\end{align}
where we have used the fact that 
 $\sup_{\omega\in\Rd}\hat\mollif(\omega) = 1$ is attained for $\omega = 0$ .	
Using $\hat\varphi^N_k(\omega) = (2N\pi)^{-d}\e^{-\i\omega k/N}$ in $D_N$, which is just the definition
of the basis functions, and the Poisson summation formula, we obtain 
\begin{equation}
	\hat{u}_\eps(\omega) - \widehat{\PiN{u}_\eps} =
	\hat{u}_\eps(\omega) - \frac{1}{(2N\pi)^d}\sum\limits_{k\in\Zd} u_\eps(k/N) \e^{-\i \omega k/N}
		= -\sum\limits_{k\in\Zd\setminus\{0\}}\hat u_\eps(2N\pi k + \omega)
\end{equation}
for $\omega\in D_N$. We plug this into \eqref{eq:interp_term_split} and obtain
\begin{align}
    \text{(I)} = \int_{D_N} |\omega|^{2s} |\hat{u}_\eps - \widehat{\PiN{u_\eps}}|^2\dom
        &= \int_{D_N} 
        |\omega|^{2s} \bigg| \sum\limits_{k\in\Zd\setminus\{0\}}\hat u_\eps(2N\pi k + \omega) \bigg|^2
        \dom \\
        &= (2N\pi)^{d+2s} \int\limits_{[-1/2,1,2]^d}|\omega|^{2s} \bigg|\sum\limits_{k\neq 0}\hat u_\eps (2N\pi (k + \omega))\bigg|^2\dom\label{eq:interp_term_split_II_ii}
\end{align}
where the second equality is obtained by a linear integral transformation. 
To verify that the Poisson summation formula  is applicable to $u_\eps$, we have to verify that both,
$u_\eps$ and $\hat u_\eps$ fulfill the decay condition required in \cref{thm:poisson_summation}.
For $u_\eps$, this is clear as $u_\eps$ is smooth and has compact support. For 
$\hat u_\eps = \hat u \cdot \hat\mollif_\eps$, this holds as $u \in L^1(\Rd)$, thus as
a consequence of the Lemma of Riemann-Lebesgue, 
$\hat u$ is continuous and $\hat u(\omega) \rightarrow 0$ for $\abs{\omega}\rightarrow\infty$.
Further, $\mollif_\eps$ is smooth, thus $\hat\mollif_\eps$ decays exponentially. Combining
the two arguments, the required decay of $\hat u_\eps$ follows.

We continue, using the Cauchy-Schwarz inequality,
\begin{align}
	&\bigg|\sum\limits_{k\neq 0}\hat u_\eps (2N\pi (k + \omega))\bigg|^2 = 
    \bigg|\sum\limits_{k\neq 0}\hat u (2N\pi (k + \omega)) \cdot \hat \mollif (\eps2N\pi (k + \omega)) 
           \cdot \frac{(2N\pi (k + \omega))^s}{(2N\pi (k + \omega))^s} \bigg|^2 \\
	&  \qquad\leq \sum\limits_{k\neq 0} \hat \mollif (2\pi (k + \omega))^2(2N\pi (k + \omega))^{-2s}
            \cdot
						\sum\limits_{k\neq 0} \abs{\hat u (2N\pi (k + \omega))}^2 (2N\pi (k + \omega))^{2s} \\
	&	\qquad= (2N\pi)^{-2s}
    \underbrace{\sum\limits_{k\neq 0} \hat \mollif (2\pi(k + \omega))^2((k + \omega))^{-2s}}_{\leq c_\mollif^2}
            \cdot
						\sum\limits_{k\neq 0} \abs{\hat u (2N\pi (k + \omega))}^2 (2N\pi (k + \omega))^{2s}
            \label{eq:cs_fourier_sum}.
\end{align}
In the second line, we have used $\eps = 1/N$. The constant $c_\mollif$ defined  by
\begin{equation}
	c_\mollif^2 \coloneqq \sup\limits_{\omega\in [-1/2,1/2]^d}
		\sum\limits_{k\neq 0} \hat\mollif^2(2\pi(k+\omega)) |(k+\omega)|^{-2s}
\end{equation}
is finite as $\hat\mollif$ decays exponentially and does not depend on $N$.
By plugging this result into \cref{eq:interp_term_split_II_ii},
we obtain
\begin{align}
    \text{(I)} &\leq
    c_\mollif^2 \cdot (2N\pi)^{d}\int\limits_{[-1/2,1,2]^d}
		\underbrace{|\omega|^{2s}}_{\leq (\sqrt{d}/2)^{2s}} 
		\sum\limits_{k\neq 0}\abs{\hat u (2N\pi (k + \omega))}^2 (2N\pi (k + \omega))^{2s} \dom \\
    &\leq c_\mollif^2 (\sqrt{d}/2)^{2s} \cdot \int\limits_{[-N\pi,N\pi]^d} \sum\limits_{k\neq 0}
		\abs{\hat u (2N\pi k + \omega)}^2 |2N\pi k+\omega|^{2s} \dom\\
   & = c_\mollif^2\cdot (\sqrt{d}/2)^{2s}\cdot \int\limits_{D_N^c} |\omega|^{2s}\abs{\hat u(\omega)}^2\dom,
		\label{eq:interp_err_term_II}
\end{align}
where the exchange of summation and integration in the last equation is justified using the monotone convergence theorem.
Now, combining \cref{eq:interp_err_term_I}, \cref{eq:interp_err_term_II}  and  \cref{eq:interp_term_split}
we obtain
\begin{equation}
	\label{eq:interp_err_term_combined_i}
	\Hs{u_\eps - \PiN u_\eps}^2 \leq \text{(I)} + \text{(II)} \leq (c_\mollif^2\cdot (\sqrt{d}/2)^{2s} + 1)
		\int\limits_{D_N^c} |\omega|^{2s}{\hat u}^2(\omega)\dom.
\end{equation}
By our assumption that  $u\in H^t(\Rd)$, we obtain an error decay rate, viz
\begin{equation}
\int_{D_N^c} |\omega|^{2s}{\hat u}^2(\omega)\dom \leq
	(d(\pi N)^2)^{s-t} \int\limits_{D_N^c} |\omega|^{2t}{\hat u}^2(\omega)\dom
	\leq(d\pi^2)^{s-t}\cdot N^{2(s-t)}\cdot \Ht{u}^2,
\end{equation}
which, substituted into \cref{eq:interp_err_term_combined_i}, yields
\begin{equation}
	\Hs{u_\eps - \PiN u_\eps} \leq 
	\underbrace{\left( (1+c_\mollif^2 (\sqrt{d}/2)^{2s}) (d\pi^2)^{s-t} \right)^{1/2}}_{\eqqcolon \cii}
		\cdot N^{s-t}\cdot \Ht{u},
\end{equation}
as promised.

The change of the sum and the integral in \cref{eq:interp_err_term_II}  needs further justification.
We want  to use the monotone convergence theorem  to show that
\begin{equation}
	\int\limits_{ D_N} \sum\limits_{k\neq 0}\abs{\hat u(2N\pi k + \omega)}^2 |2N\pi k + \omega|^{2s}\dom
		= \int\limits_{D_N^c} |\omega|^{2s}\abs{\hat u(\omega)}^2\dom.
\end{equation}
For $m \geq 1$, we denote $mD_N = [-mN\pi, mN\pi]^d$. Let further denote 
${\I_m = \{-m,\ldots,m\}^d}$. Then
\begin{align}
	\int\limits_{ D_N^c} |\omega|^{2s}\abs{\hat u(\omega)}^2\dom &=
		\int\limits_{m\cdot D_N\setminus D_N} |\omega|^{2s}\abs{\hat u(\omega)}^2\dom
			+ \underbrace{\int\limits_{(m\cdot D_N)^c} |\omega|^{2s}\abs{\hat u(\omega)}^2\dom}_{\coloneqq R(m)} \\
		&= \sum\limits_{k \in \I_m\setminus\{0\}}
			\int\limits_{D_N} |\omega+2mN\pi|^{2s}\abs{\hat u(\omega+2mN\pi)}^2\dom +\;R(m)\\
		&= \int\limits_{D_N} \sum\limits_{k \in \I_m\setminus\{0\}}
			|\omega+2mN\pi|^{2s}\abs{\hat u(\omega+2mN\pi)}^2\dom + \;R(m).
\end{align}
The sequence of functions $(F_m)_{m\in\N}$ given by
\begin{equation}
	F_m(\omega) = \sum\limits_{k \in \I_m\setminus\{0\}}
			|\omega+2mN\pi|^{2s}\abs{\hat u(\omega+2mN\pi)}^2
\end{equation}
is pointwise monotonous increasing as $\abs{\hat u(\xi)}^2 |\xi|^{2s} \geq 0$. The integral
of the $F_m$ is bounded because
\begin{equation}
	\int\limits_{D_N} F_m\dom = 
	\int\limits_{D_N} \sum\limits_{k \in \I_m\setminus\{0\}}
			|\omega+2mN\pi|^{2s}\abs{\hat u(\omega+2mN\pi)}^2\dom < \int\limits_{D_N^c}\abs{\hat u(\omega)}^2|\omega|^{2s}\dom
				< \infty.
\end{equation}
 Therefore, we can apply the monotone convergence theorem and the proof is complete.
\end{proof}
\begin{remark}
	The above lemma holds for fractional exponent $0$ as well. By considering the $\Lt{\,\cdot\,}$-norm
	as the $\abs{\,\cdot\,}_{H^0(\Rd)}$-seminorm, we obtain estimates with respect to 
	$\Hsnorm{\,\cdot\,}$, i.e., 
	\begin{equation}
		\Hsnorm{u - u_\eps}^2 =  \Lt{u - u_\eps}^2 + \Hs{u - u_\eps}^2
			\leq N^{-t} + C N^{s-t} \leq C N^{s-t}
	\end{equation}
	and
	\begin{equation}
		\Hsnorm{u_\eps - \PiN u_\eps}^2 =  \Lt{u_\eps - \PiN u_\eps}^2 + \Hs{u_\eps - \PiN u_\eps}^2
			\leq N^{-t} + C N^{s-t} \leq C N^{s-t}.
	\end{equation}
\end{remark}

The problem with the above approximation procedure is that the smoothing operation
enlarges the support of the function, thus, in general, ${\PiN u_\eps\not\in\V_h(\Omega)}$.
However, using the fact that the growth of solutions to \cref{eq:bvp} is bounded 
by a power of the distance to the boundary of $\Omega$, see \cref{eq:bound_u_boundary}, we can prove the following lemma.
\begin{lemma}
	\label{lemma:sinc_interp_coeff_drop}
	Let $u\in\tildeHsOm$ be such that $\abs{u(x)} \leq \cbound\dist(x,\delOm)^s$ for a positive constant 
	$\cbound$.
	For $x\in\Rd$, let $B_h(x)$ be the ball around $x$ with radius $h$. Assume that $\Omega$ is such that only $\mathcal O(N^{d-1})$-many points of the grid $h\Z^d$ are located within
	the strip of width $h$ around $\Omega$, namely
	\begin{equation} \label{eq:boundarypoints_bound}
		\abs{\{((\Omega + B_h)\setminus\Omega) \cap h\Zd\}} \leq D\cdot N^{d-1} 
	\end{equation}
	where
	$\Omega + B_h \coloneqq \left\{ x + y \,\big|\, x\in\Omega, y\in B_h(0)\right\}$
	and $D = D(\Omega)$ is a positive constant that does
	not depend on $h$. Then, for $\eps = h = 1/N$,
	\begin{equation}
		\Hsnorm{\PiN u_\eps - \resOm{\PiN u_\eps}} \leq \ciii \cdot \cbound\cdot h^{1/2}
	\end{equation}
\end{lemma}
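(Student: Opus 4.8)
The plan is to recognize that $w_h := \PiN u_\eps - \resOm{\PiN u_\eps}$ is a $\sinc$-function whose nonzero coefficients sit only on the thin layer of grid points just outside $\Omega$, to show that each such coefficient has size $\mathcal{O}(h^s)$, and then to trade the resulting $\ell^2$-estimate for an $H^s$-estimate via the inverse estimate of \Cref{lemma:inverse_estimate}. The structural hypothesis \cref{eq:boundarypoints_bound} is precisely what will bound the number of these coefficients by $\mathcal{O}(N^{d-1})$ rather than the naive $\mathcal{O}(N^{d})$, and it is the source of the extra power of $h$ in the estimate.

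First I would localize the coefficients. By the definitions of $\PiN$ and $\resOm{\,\cdot\,}$ one has $w_h = \sum_{k\in\Zd\setminus\Omega_h} u_\eps(k/N)\,\varphi^N_k$, so the coefficient vector of $w_h$ has entries $c_k = (\mollif_\eps * u)(k/N)$ for $k/N\notin\Omega$ and $c_k=0$ otherwise. Since $\supp\mollif_\eps\subset B_\eps(0)=B_h(0)$ and $\supp u\subset\overline\Omega$, we may write $c_k = \int_{B_h(k/N)}\mollif_\eps(k/N-y)\,u(y)\dy$, so $c_k=0$ whenever $B_h(k/N)\cap\overline\Omega=\emptyset$; hence $c_k$ can be nonzero only for indices $k$ with $k/N\in(\Omega+B_h)\setminus\Omega$, of which there are at most $D N^{d-1}$ by \cref{eq:boundarypoints_bound}. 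Next I would bound each surviving coefficient: using $\norm{\mollif_\eps}_{L^1(\Rd)}=1$ we get $\abs{c_k}\leq\sup_{y\in B_h(k/N)\cap\overline\Omega}\abs{u(y)}$, and for such $y$ with $k/N\notin\Omega$ one has $\dist(y,\Omega^c)\leq\abs{y-k/N}<h$, hence $\dist(y,\delOm)<h$, so the boundary decay hypothesis $\abs{u(y)}\leq\cbound\dist(y,\delOm)^s$ (cf.\ \cref{eq:bound_u_boundary} and \Cref{rem:def_cbound_csob}) yields $\abs{c_k}\leq\cbound h^s$.

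With these two facts in hand the estimate assembles itself. The $L^2$--$\ell^2$ identity \cref{eq:sinc_L2_l2_identity} gives
\[
	\Lt{w_h}^2 = N^{-d}\sum_{k} c_k^2 \leq N^{-d}\cdot D N^{d-1}\cdot(\cbound h^s)^2 = D\,\cbound^2\,h^{1+2s},
\]
and \Cref{lemma:inverse_estimate} then gives $\Hs{w_h}\leq C N^{s}\Lt{w_h}\leq C\sqrt{D}\,\cbound\,N^{s}h^{s+1/2} = C\sqrt{D}\,\cbound\,h^{1/2}$, so that
\[
	\Hsnorm{w_h}^2 = \Lt{w_h}^2 + \Hs{w_h}^2 \leq D\,\cbound^2\,h^{1+2s} + C^2 D\,\cbound^2\,h \leq \ciii^2\,\cbound^2\,h
\]
for $h\leq 1$ since $1+2s>1$, which is the claim with $\ciii$ depending only on $\Omega,d,s$.

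I expect no real obstacle here; the points deserving care are (a) the $L^1$-normalization $\norm{\mollif_\eps}_{L^1}=1$, which keeps the coefficient bound free of any $\eps^{-d}$ blow-up; (b) the elementary geometric observation that a point within distance $h$ of an \emph{exterior} grid point lies within distance $h$ of $\delOm$, which is exactly what lets the boundary decay estimate be applied with the favorable power $h^s$ --- without it one would only obtain $h^{1/2-s}$, which does not even vanish for $s\geq 1/2$; and (c) the bookkeeping that the factor $N^s$ lost in the inverse estimate is exactly compensated by the $h^{s+1/2}$ gained in $\Lt{\,\cdot\,}$, so the final rate is the ``boundary rate'' $h^{1/2}$, independent of the regularity exponent $t$.
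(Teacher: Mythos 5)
Your proof is correct and follows essentially the same route as the paper's: bound the surviving coefficients by $\cbound h^s$ via the boundary decay and the mollifier's unit mass, count them via \cref{eq:boundarypoints_bound}, convert to an $L^2$ bound with \cref{eq:sinc_L2_l2_identity}, and finish with the inverse estimate of \Cref{lemma:inverse_estimate}. Your explicit remark that coefficients vanish outside the strip $(\Omega+B_h)\setminus\Omega$ (because $\supp\mollif_\eps\subset B_h$ and $\supp u\subset\overline\Omega$) makes precise a step the paper leaves implicit, but the argument is otherwise identical.
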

where $\ciii$ is a positive constant that depends on $D(\Omega)$, $s$ and $d$.
\begin{proof}
    Let $x_k \coloneqq k/N\in (\Omega + B_h)\setminus \Omega$. As for
    $x\in\Omega \cap B_h(x_k) \neq \emptyset$, it holds that
    \begin{equation}
        \dist(x,\delOm) \leq \abs{x_k - x} \leq h
    \end{equation}
    and it follows that
    \begin{equation}
			\sup\limits_{x\in\Omega\cap B_h(x_k)}\abs{u(x)} \leq \cbound\dist(x,\delOm)^s \leq \cbound h^s.
    \end{equation}
    Therefore
    \begin{align}
        u_\eps(x_k) &= \int_{B_\eps(x_k)} \eta_\eps(x-x_k) u(x)\dx \nonumber\\
            &\leq \sup\limits_{x\in B_\eps(x_k)} u(x) \cdot 
            \underbrace{\int_{B_\eps(x_k)}\eta_\eps(x-x_k)\dx}_{=1}
            \leq \cbound\, \eps^s = \cbound\, h^s,
    \end{align}
    noting that we chose $h=\eps$.
	Now, let $v_h \coloneqq \PiN{u_\eps}$ and $v_h^\Omega \coloneqq
	\resOm{\PiN{u_\eps}}$.  Recall that $\resOm{\cdot}$ does not actually
	restrict the support of the function, but only the index set in the sum
	as defined and explained in \cref{eq:def_discrete_restriction}. 
	Using this, first, we compute the desired bound in the $L^2(\Rd)$-norm, i.e.,
	\begin{align}
		\Lt{v_h^\Omega - v_h}^2 &= N^{-d}\sum\limits_{k\in\Zd} (v_h(k/N) - v_h^\Omega(k/N))^2
			= N^{-d}\sum\limits_{k\in\Zd\setminus\Omega_h} v_h(k/N)^2 \nonumber\\
			&\leq N^{-d}\underbrace{\Big(\text{\#gridpoints in }(\Omega + B_h)\setminus\Omega\Big)}_{\leq D N^{d-1}}
				\cdot \underbrace{ \max_{k\in\Zd\setminus\Omega_h} \left(v_h(k/N)\right)^2}_{\leq \cbound^2 h^{2s}} \nonumber\\
			&= \cbound^2\cdot D N^{-1}\cdot h^{2s} = \cbound^2 \cdot D \cdot h^{1+2s}
	\end{align}
	Using \Cref{lemma:inverse_estimate} and the above estimate, we obtain that 
	\begin{align}
		\Hsnorm{v_h^\Omega - v_h}^2 &= \Lt{v_h^\Omega - v_h}^2 + \Hs{v_h^\Omega - v_h}^2 \nonumber\\
			&\leq (1+C\cdot N^{2s})\Lt{v_h^\Omega - v_h}^2 \nonumber\\
			&\leq  (1+C\cdot N^{2s}) \cdot\cbound^2 \cdot D \cdot h^{1+2s} \nonumber\\
			&\leq \ciii^2 \cdot \cbound^2\cdot h
	\end{align}
	where in the second line, $C = C(d,s)$ is the constant from \cref{lemma:inverse_estimate}.
	The proof is complete.
\end{proof}

Combining \Cref{lemma:sinc_interp_smoothing,lemma:sinc_interp_coeff_drop},
we can show the following bound on the approximation error.
\begin{proposition}
	\label{prop:bound_approximation_error}
	Let $\Omega$ be a bounded Lipschitz domain that fulfills the exterior ball condition
	and $f$ as in \Cref{thm:sobolev_reg_u_lipschitz}. Then, there exists a constant
	$\capprox(\Omega, f) > 0$ such that
	\begin{equation}
		\inf\limits_{v_h\in\V_h(\Omega)}\Hsnorm{u - v_h} \leq \frac{\capprox(\Omega, f)}{\theta}  h^{1/2-\theta}, \quad \text{for all $\theta>0$}.
	\end{equation}
\end{proposition}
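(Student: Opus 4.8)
The strategy is to take the triangle inequality over the three-step construction outlined just before the statement, namely smoothing, interpolation, and restriction: for any $\theta > 0$ we choose $t = s + 1/2 - \theta$ and set $\eps = h = 1/N$, and estimate
\begin{equation*}
	\Hsnorm{u - \resOm{\PiN u_\eps}} \leq \Hsnorm{u - u_\eps} + \Hsnorm{u_\eps - \PiN u_\eps} + \Hsnorm{\PiN u_\eps - \resOm{\PiN u_\eps}}.
\end{equation*}
Since $\resOm{\PiN u_\eps} \in \V_h(\Omega)$, the left-hand side bounds the infimum, so it suffices to bound each of the three terms on the right by $C \theta^{-1} h^{1/2 - \theta}$ (up to the $\theta$-independent factors appearing in the cited lemmas).

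First I would invoke \Cref{thm:sobolev_reg_u_lipschitz}: under the hypotheses on $\Omega$ and $f$, the solution $u$ lies in $\widetilde H^{s + 1/2 - \theta}(\Omega)$ with $\norm{u}_{\widetilde H^{s+1/2-\theta}(\Omega)} \leq \theta^{-1} \csob(\Omega, f, s)$, so in particular $\Ht{u} \leq \theta^{-1} \csob$ for $t = s + 1/2 - \theta$; I also record from \Cref{thm:bound_u_boundary} the boundary-decay bound $\abs{u(x)} \leq \cbound(\Omega, f, s)\dist(x, \delOm)^s$, which is exactly the hypothesis needed for \Cref{lemma:sinc_interp_coeff_drop}. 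Note $t - s = 1/2 - \theta < 2$, so the hypothesis $t - s < 2$ of \Cref{lemma:sinc_interp_smoothing}\eqref{interp_smooth_i} is satisfied for all $\theta > 0$. Then the first two terms are handled directly by \Cref{lemma:sinc_interp_smoothing} (together with the remark following its proof, which upgrades the seminorm bounds to full $\Hsnorm{\cdot}$ bounds): each is $\leq C \eps^{t - s} \Ht{u} = C h^{1/2 - \theta} \theta^{-1} \csob$. The third term is exactly the content of \Cref{lemma:sinc_interp_coeff_drop}, giving $\Hsnorm{\PiN u_\eps - \resOm{\PiN u_\eps}} \leq \ciii \cbound h^{1/2}$, which is even $\theta$-independent and so is absorbed. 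Collecting the three bounds and setting $\capprox(\Omega, f) \coloneqq C(\ci + \cii)\csob(\Omega, f, s) + \ciii\cbound(\Omega, f, s)$ (with the understanding that $\csob, \cbound$ carry the dependence on $s, d$ as well) yields the claim.

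The only genuinely delicate point is bookkeeping the $\theta$-dependence: the regularity constant in \Cref{thm:sobolev_reg_u_lipschitz} blows up like $1/\theta$ as $\theta \to 0$, and the exponent $h^{1/2 - \theta}$ degrades correspondingly, so one cannot simply take $\theta = 0$; the statement is therefore phrased with the explicit factor $1/\theta$ and the $\theta$-dependent exponent, and one must make sure the constants $\ci, \cii$ from \Cref{lemma:sinc_interp_smoothing} stay bounded as $\theta \to 0$ — they do, since $\ci$ depends on $\sup_\omega \abs{\eps\omega}^{2s - 2t}(1 - \hat\eta(\eps\omega))^2$, which for $t - s = 1/2 - \theta \leq 1/2$ is uniformly bounded (the near-origin behaviour is controlled by $\abs{\omega}^{4 - 2(t-s)} \geq \abs{\omega}^{3}$, harmless), and similarly $\cii$ depends continuously on $t - s \in (0, 1/2]$ via the factor $(d\pi^2)^{s - t}$. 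A secondary point worth stating explicitly is that the structural assumption \eqref{eq:boundarypoints_bound} on $\Omega$ needed by \Cref{lemma:sinc_interp_coeff_drop} is implied by $\Omega$ being a bounded Lipschitz domain (a Lipschitz boundary has finite $(d-1)$-dimensional measure, so the number of grid points in an $h$-neighbourhood of $\delOm$ is $\mathcal O(N^{d-1})$); I would either note this or fold it into the hypotheses. With those caveats addressed, the proof is a clean assembly of the preceding lemmas.
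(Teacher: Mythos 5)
Your proposal is correct and follows essentially the same route as the paper: the paper's proof likewise first notes that \eqref{eq:boundarypoints_bound} holds for any bounded Lipschitz domain (via a covering of the boundary strip by $O(h^{1-d})$ balls), then applies the triangle inequality over exactly your three terms with $t = s+1/2-\theta$ and $\eps = h$, citing \Cref{lemma:sinc_interp_smoothing}, \Cref{lemma:sinc_interp_coeff_drop} and \Cref{thm:sobolev_reg_u_lipschitz}. Your explicit tracking of the $\theta$-dependence of the constants and the upgrade from seminorm to full norm via the remark after \Cref{lemma:sinc_interp_smoothing} is, if anything, slightly more careful than the paper's write-up.
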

\begin{proof}  We first note that \eqref{eq:boundarypoints_bound} is satisfied for any bounded Lipschitz domain. To see this, one may use a covering of $(\Omega + B_h)\setminus \Omega$ by balls of radius $5h$ centered on $\partial \Omega$ and observe that no more than $\frac{C}{h^{d-1}}$ such balls are necessary, where $C$ depends on the Lipschitz constant of the domain and the perimeter of $\partial \Omega$. The number of points on $h\Z^d$ contained in each such ball is uniformly bounded.  Now,  by using $t = s+1/2-\theta$ in \Cref{lemma:sinc_interp_smoothing,lemma:sinc_interp_coeff_drop}
	and \Cref{thm:sobolev_reg_u_lipschitz}, we obtain 
	\begin{align} \inf\limits_{v_h\in\V_h(\Omega)}\Hs{u - v_h} &\leq \Hs{u - \resOm{\PiN{u_\eps}}}  \nonumber\\
		&\leq \Hs{u - u_\eps} + \Hs{u_\eps - \PiN u_\eps} + \Hs{\PiN u_\eps - \resOm{\PiN{u_\eps}}} \nonumber\\
		&\leq (\ci + \cii) h^{t-s}\Ht{u} + \ciii\cdot\cbound\cdot h^{1/2} \nonumber\\
		&\leq (\ci + \cii) h^{1/2-\theta}\frac{\csob(\Omega, f)}{\theta}  + \ciii\cdot\cbound\cdot h^{1/2}
	\end{align} 
	which shows the desired result.
\end{proof}
If $f$ is less reguar but $\Omega$ is smooth, we can derive the following 
estimate using \Cref{thm:sobolev_reg_u_smooth}.
\begin{proposition}
	\label{prop:bound_approximation_error_2}
	Let $\Omega$ be a smooth domain that fulfills the exterior ball condition
	and let $f \in L^{\infty}(\Omega)$. Then, there exists a constant
	$\capprox(\Omega, f, d, s) > 0$ such that
	\begin{equation}
		\inf\limits_{v_h\in\V_h(\Omega)}\Hsnorm{u - v_h} \leq \capprox(\Omega, f, d, \alpha) h^{\alpha}
	\end{equation}
	where $\alpha = \min(s, \frac{1}{2}-\theta)$ for $\theta$ arbitrarly small.
\end{proposition}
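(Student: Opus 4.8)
The plan is to repeat the argument of the proof of \Cref{prop:bound_approximation_error} almost verbatim, with the single change that the Sobolev-regularity input \Cref{thm:sobolev_reg_u_lipschitz} is replaced by \Cref{thm:sobolev_reg_u_smooth}. Since $f\in L^\infty(\Omega)\subset L^2(\Omega)=H^0(\Omega)$, we apply \Cref{thm:sobolev_reg_u_smooth} with $r=0$: this yields $u\in\widetilde H^{s+\alpha}(\Omega)$ with $\alpha=\min\bigl(s,\tfrac12-\theta\bigr)$ and $\Ht{u}\le C(d,\alpha)\norm{f}_{L^2(\Omega)}$, where $t\coloneqq s+\alpha$. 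Note that $t-s=\alpha\le\tfrac12<2$, so \Cref{lemma:sinc_interp_smoothing}(\ref{interp_smooth_i}) applies with this $t$.

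First I would verify the two structural hypotheses needed for the approximating element. Since $\Omega$ is smooth it is in particular a bounded Lipschitz domain, so \cref{eq:boundarypoints_bound} holds by exactly the covering argument already given in the proof of \Cref{prop:bound_approximation_error}. Moreover, $\Omega$ satisfies the exterior ball condition and $f\in L^\infty(\Omega)$, so \Cref{thm:bound_u_boundary} gives $\abs{u(x)}\le\cbound\dist(x,\delOm)^s$ with $\cbound$ depending on $\Omega$, $s$ and $\norm{f}_{L^\infty(\Omega)}$; in the notation of \Cref{rem:def_cbound_csob} this fixes $\cbound(\Omega,f,s)$. Then, with $\eps=h=1/N$ and $v_h\coloneqq\resOm{\PiN u_\eps}\in\V_h(\Omega)$, I would use the same triangle-inequality split as before, $\Hsnorm{u-v_h}\le\Hsnorm{u-u_\eps}+\Hsnorm{u_\eps-\PiN u_\eps}+\Hsnorm{\PiN u_\eps-\resOm{\PiN u_\eps}}$, and bound the three terms via \Cref{lemma:sinc_interp_smoothing}(\ref{interp_smooth_i}), \Cref{lemma:sinc_interp_smoothing}(\ref{interp_smooth_ii}) (together with the remark following \Cref{lemma:sinc_interp_smoothing} for the $L^2$-contributions to the full norm), and \Cref{lemma:sinc_interp_coeff_drop}, respectively. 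This produces $\Hsnorm{u-v_h}\le(\ci+\cii)h^{t-s}\Ht{u}+\ciii\cbound h^{1/2}=(\ci+\cii)h^{\alpha}\Ht{u}+\ciii\cbound h^{1/2}$.

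Finally, since $\alpha\le\tfrac12$ and $h\le1$ we have $h^{1/2}\le h^{\alpha}$, so both contributions are bounded by a constant times $h^{\alpha}$; collecting the constants and using $\Ht{u}\le C(d,\alpha)\norm{f}_{L^2(\Omega)}$ gives the claimed estimate with $\capprox=\capprox(\Omega,f,d,\alpha)$. I do not expect a genuine obstacle here beyond careful bookkeeping of the regularity exponent: the only things to check are that $t-s=\alpha<2$ (needed for \Cref{lemma:sinc_interp_smoothing}) and that the boundary-layer term $h^{1/2}$ never dominates $h^{\alpha}$ — both of which follow at once from $\alpha\le\tfrac12$.
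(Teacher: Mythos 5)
Your proposal is correct and follows essentially the same route as the paper: the paper's proof likewise notes $f\in L^\infty(\Omega)\subset L^2(\Omega)$, applies \Cref{thm:sobolev_reg_u_smooth} with $r=0$ to get $t=s+\alpha$ (i.e., $t=2s$ for $s<1/2$ and $t=s+1/2-\theta$ otherwise), and then declares the remainder identical to the proof of \Cref{prop:bound_approximation_error}. Your additional bookkeeping (checking $t-s<2$, the boundary-point count, and that $h^{1/2}\le h^{\alpha}$) is exactly the content the paper leaves implicit.
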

\begin{proof}
	As $\Omega$ is bounded, $f\in L^{\infty}(\Omega)$ implies $f\in L^{2}(\Omega)$.
	Setting $r = 0$ in \Cref{thm:sobolev_reg_u_smooth}, we know that
	\[
		\Ht{u} \leq C(d,t)\norm{f}_{L^{2}(\Omega)}
	\]
	where 
	\[
		t = \begin{cases}
			2s&\text{if } s < 1/2 \\
			s + 1/2 - \theta&\text{otherwise}
		\end{cases}
	\]
	for all $\theta > 0$. With this, the proof is the same
	as in \Cref{prop:bound_approximation_error}.
\end{proof}

\subsection{Approximation of the right-hand side}
\label{sub:approx_rhs}
This section is devoted to the question how one can bound the consistency
term in the abstract error estimate that is \Cref{thm:abstract_error}. Connected to this question, we discuss how the right hand side of the discrete equation, see \cref{eq:formulation_collocation}, has to be chosen. 

Since we will -- in the following -- need to enlarge our domain $\Omega$ to be able to mollify the right hand side $f$ before evaluation on grid points, we require the regularity theorems to hold uniformly on such enlarged domains (i.e., $\cbound$ and $\csob$ defined in \cref{sub:summary_regularity_results} should remain bounded for slightly enlarged domains). This leads to the following assumption.
\begin{assumption} \label{ass:dom}
There is a Lipschitz constant $L>0$ and radii $r>0$, $R>0$ so that $\Omega+B_\rho$ are Lipschitz domains with Lipschitz constant no larger than $L$ and satisfy an exterior ball condition for balls of radius $r$, uniformly for $0\le\rho<R$.
\end{assumption}
It is easy to check that for typical domains like convex polygons or domains with smooth boundary satisfy this assumption. Further, see Theorem 1.1 in \cite{Belegradek2017} and the remark after the theorem to note that if $\Omega$ is Lipschitz and convex, then so is $\Omega + B_\rho$.

We first consider the case when $f\in L^\infty(\Omega)$. It is clear that a simple interpolation of $f$ will not be possible in this case. We proceed in  two  steps.
\begin{description}
\item[\textit{Step 1.}] Assuming we knew $\bar{f}$ from \cref{eq:def_f_bar} (i.e., the fractional Laplacian applied to the solution $u$ on the entire $\R^d$), we bound the error that is made by sinc-interpolating a mollified $\bar{f}$.
\item[\textit{Step 2.}] We show that the difference between $\bar{f}$ and the original $f$ continued onto a slightly enlarged domain can also be bounded. On this enlarged
	domain, the smoothing operation can be computed on the grid points in the original domain $\Omega$.
\end{description}
Afterwards, we discuss in which cases one can omit the onerous extension-and-mollification steps and directly interpolate a given right hand side $f$.

\textbf{Step 1} above is treated by the following lemma.
\begin{lemma}
	\label{lemma:sinc_interp_smoothing_f}
	Let $|\omega|^{2s}\hat u = \hat f = \FT\bar{f}$, and $u\in \widetilde H^t(\Omega)$, $t > s$, as before. For an appropriately chosen mollifier $\eta_\eps$ (see proof), set $\bar f_\eps \coloneqq \eta_\eps *\bar f$.
	Then,
	\begin{enumerate}[(i)]
		\item\label{interp_smooth_rhs_i} For $\eps > 0$, we have
			\begin{equation}
				\Hms{\bar f-\bar f_\eps} \leq \eps^{t-s} \Ht{u} \, .
			\end{equation}
		\item\label{interp_smooth_rhs_ii} For $\eps = 1/N$, we have
			\begin{equation}
				\Hms{f_\eps - \PiN{f_\eps}} \leq \eps^{t-s}\Ht{u} \, .
			\end{equation}
	\end{enumerate}
\end{lemma}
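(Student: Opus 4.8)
\textbf{Proof proposal for Lemma \ref{lemma:sinc_interp_smoothing_f}.}

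The plan is to mimic the proof of \Cref{lemma:sinc_interp_smoothing} almost verbatim, but working in the $\Hms{\,\cdot\,}$-norm instead of the $\Hs{\,\cdot\,}$-seminorm, and to exploit the identity $\hat f(\omega) = |\omega|^{2s}\hat u(\omega)$ to trade the two extra powers of $|\omega|^{2s}$ coming from the $H^{-s}$-weight against the two powers of $|\omega|^{2s}$ we gain by passing from $\hat f$ to $\hat u$. Concretely, for part (\ref{interp_smooth_rhs_i}), I would write
\[
	\Hms{\bar f - \bar f_\eps}^2 = \int_{\Rd} (1+|\omega|)^{-2s}\,|1-\hat\eta(\eps\omega)|^2\,|\hat f(\omega)|^2\dom
		\le \int_{\Rd} |\omega|^{-2s}\,|1-\hat\eta(\eps\omega)|^2\,|\omega|^{4s}\,|\hat u(\omega)|^2\dom,
\]
and then factor out $\eps^{2t-2s}$ exactly as before, leaving the supremum $\sup_\omega |\eps\omega|^{2s-2t}|1-\hat\eta(\eps\omega)|^2$ times $\Ht{u}^2$. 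Finiteness of that supremum near $\omega=0$ again follows from the Taylor expansion argument: $1-\hat\eta(\omega)=\mathcal O(|\omega|^2)$ for a symmetric mollifier, so $|\omega|^{2s-2t}|1-\hat\eta(\omega)|^2 \lesssim |\omega|^{2s-2t+4}$, which is bounded near $0$ as long as $t-s<2$. The only genuinely new ingredient is that here we have effectively shifted the relevant Sobolev exponent by $2s$ (the weight $|\omega|^{-2s}$ against $|\omega|^{4s}=(|\omega|^{2s})^2$), so the conditions on the mollifier and the admissible range of $t$ need to be tracked accordingly — this is presumably what ``appropriately chosen mollifier (see proof)'' refers to.

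For part (\ref{interp_smooth_rhs_ii}), I would repeat the decomposition
\[
	\Hms{f_\eps - \PiN f_\eps}^2 = \underbrace{\int_{D_N} (1+|\omega|)^{-2s}\,|\hat f_\eps - \widehat{\PiN f_\eps}|^2\dom}_{\text{(I)}}
		+ \underbrace{\int_{D_N^c} (1+|\omega|)^{-2s}\,|\hat f_\eps|^2\dom}_{\text{(II)}},
\]
bound (II) by $\int_{D_N^c}|\omega|^{-2s}|\hat\eta(\eps\omega)|^2|\omega|^{4s}|\hat u|^2 \le \int_{D_N^c}|\omega|^{2s}|\hat u|^2$ using $|\hat\eta|\le 1$, and handle (I) via the Poisson summation formula applied to $f_\eps$, writing $\hat f_\eps(\omega)-\widehat{\PiN f_\eps}(\omega) = -\sum_{k\neq 0}\hat f_\eps(2N\pi k+\omega)$ for $\omega\in D_N$. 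Then a Cauchy–Schwarz split — pulling out $\hat\eta(\eps\cdot)^2$ against the weight and leaving $|\hat u|^2$ times the appropriate power of the frequency — reduces (I) to a constant (the analogue of $c_\eta^2$, now involving $\sum_{k\neq 0}\hat\eta^2(2\pi(k+\omega))|k+\omega|^{2s}$, which converges by exponential decay of $\hat\eta$) times $\int_{D_N^c}|\omega|^{2s}|\hat u|^2\dom$. Finally, $\int_{D_N^c}|\omega|^{2s}|\hat u|^2 \le (d\pi^2)^{s-t}N^{2(s-t)}\Ht{u}^2$ as in the previous lemma, giving the claimed $\eps^{t-s}\Ht{u}$ bound after taking square roots (with $\eps=1/N$). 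The applicability of Poisson summation to $f_\eps$ needs the decay hypothesis of \Cref{thm:poisson_summation}: $f_\eps$ is smooth but, unlike $u_\eps$, is \emph{not} compactly supported (it equals $\Ns u$ outside $\Omega$, which decays only polynomially), so here I would need the pointwise decay estimate $|\Ns u(x)|\lesssim \dist(x,\Omega)^{-d-2s}$ (or at least $-d-\delta$) that follows from the explicit integral formula for $\Ns u$ together with $u\in L^2(\Omega)$ compactly supported; and $\hat f_\eps = |\omega|^{2s}\hat u\cdot\hat\eta_\eps$ decays because $\hat\eta_\eps$ decays exponentially while $|\omega|^{2s}\hat u\in L^2$ — I would need a little care (e.g. choosing $\eta$ so that $\hat\eta$ has enough decay, or noting $\hat f_\eps\in L^1$ directly) to get the requisite pointwise bound.

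The main obstacle I anticipate is precisely this verification that the Poisson summation formula applies to $\bar f_\eps$: the function $\bar f$ is only $L^2_{\mathrm{loc}}$ and decays polynomially at infinity like $|x|^{-d-2s}$ (via $\Ns u$), so the decay margin is thin when $s$ is small, and one must either invoke a sufficiently smoothing, fast-decaying mollifier or an interpolation/density argument to push PSF through. Everything else is a routine recycling of the computations in \Cref{lemma:sinc_interp_smoothing} with the bookkeeping of the $(1+|\omega|)^{-2s}$ weight, which I would simply bound above by $|\omega|^{-2s}$ throughout; one should double-check that this crude replacement does not cost a logarithm near $\omega=0$ — if it does, the statement would pick up an $|\log h|$ factor, but since $|\omega|^{-2s}|\omega|^{4s}=|\omega|^{2s}$ is integrable near the origin against $|\hat u|^2\in L^\infty_{\mathrm{loc}}$ this should be harmless.
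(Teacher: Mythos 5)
Your overall route is the same as the paper's: part (i) by bounding the weight $(1+|\omega|^2)^{-s}\le|\omega|^{-2s}$ and trading it against $|\hat f|^2=|\omega|^{4s}|\hat u|^2$, and part (ii) by the split into (I) and (II), Poisson summation on $D_N$, and Cauchy--Schwarz; your verification of the PSF hypotheses via the polynomial decay $|\flap u_\eps(x)|\lesssim \|u\|_{L^1}|x|^{-d-2s}$ outside $\supp u_\eps$ is also exactly what the paper does. However, there is one genuine gap, and it sits precisely at the point you wave through: the claim that the analogue of $c_\mollif^2$ in term (I) ``converges by exponential decay of $\hat\eta$.'' After the change of variables $\omega=2N\pi\tilde\omega$, $\tilde\omega\in[-1/2,1/2]^d$, the Cauchy--Schwarz factor you must take the supremum of is not $\sum_{k\neq0}\hat\mollif(2\pi(k+\tilde\omega))^2|k+\tilde\omega|^{2s}$ alone but
\begin{equation*}
	\frac{(2N\pi)^{2s}}{w(2N\pi\tilde\omega)^{s}}\sum_{k\neq0}\hat\mollif\left(2\pi(k+\tilde\omega)\right)^2|k+\tilde\omega|^{2s},
\end{equation*}
because the $H^{-s}$-weight is evaluated at the \emph{low} frequency $\omega\in D_N$ while the powers $|\,\cdot\,|^{2s}$ produced by $\hat f=|\omega|^{2s}\hat u$ are evaluated at the \emph{aliased} frequencies $2N\pi k+\omega$. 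Near $\tilde\omega=0$ the prefactor is of order $N^{2s}$ (or, with your cruder bound $|\omega|^{-2s}$, of order $|\tilde\omega|^{-2s}$ with no compensation), while for a generic mollifier the lattice sum does not vanish at $\tilde\omega=0$ since $\hat\mollif(2\pi k)\neq0$ for some $k\neq0$. So for an arbitrary smooth symmetric mollifier the constant diverges like $N^{2s}$ and the claimed rate $\eps^{t-s}$ is lost.

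This is what the phrase ``appropriately chosen mollifier'' actually refers to (you guessed it concerned part (i), but part (i) works for any standard mollifier). The paper's fix, carried out in \cref{sub:construction_of_mollifiers}, is to take $\mollif=\psi*\chi_{(-1/2,1/2)^d}$, so that $\hat\mollif(\omega)=\hat\psi(\omega)\prod_i\sinc(\omega_i/(2\pi))$ vanishes on the dual lattice $2\pi\Zd\setminus\{0\}$; the sum then vanishes like $|\tilde\omega|^{2d}$ near $\tilde\omega=0$, which beats the $|\tilde\omega|^{-2s}$ singularity for $s<d$ and makes the supremum finite uniformly in $N$. Without this (or an equivalent vanishing condition on $\hat\mollif$ at the nonzero dual lattice points), the step in your part (ii) fails; the rest of your argument, including the $|\log h|$ worry at the end (which is indeed harmless), is fine.
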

\begin{proof}
	(\ref{interp_smooth_rhs_i}) Using $\hat f = \abs{\omega}^{2s}\hat u$ and 
	$w(\omega)^{-s} \coloneqq\big(1+\abs{\omega}^{2}\big)^{-s} \leq \abs{\omega}^{-2s}$, the same proof as for part (i) in \Cref{lemma:sinc_interp_smoothing} applies.

	(\ref{interp_smooth_rhs_ii})
	Similarly as in the proof of the interpolation errors of the solution in \Cref{lemma:sinc_interp_smoothing}(ii), see
	\Cref{lemma:sinc_interp_smoothing}, we compute 
	\begin{equation}
		\Hs{f_\eps - \PiN f_\eps}^2 = 
		\underbrace{\int_{D_N} w(\omega)^{-s} |\hat{f_\eps} - \widehat{\PiN{f_\eps}}|^2\dom}_{\text{I}}
			+ \underbrace{\int_{D_N^c} w(\omega)^{-s} |\hat{f_\eps}|^2\dom}_{\text{II}}.
	\end{equation}

	For the second term, as before, we obtain
	\begin{align}
		\int_{D_N^c} w(\omega)^{-s}|\hat{f_\eps}|^2\dom &=
				\int_{D_N^c} \frac{\abs{\omega}^{2s}}{(1+\abs{\omega}^2)^{s}}  \abs{\hat\mollif(\eps\omega)}^2\abs{\hat{u}}^2\dom
			\leq \int_{D_N^c} |\omega|^{2s} \abs{\hat{u}}^2\dom .
	\end{align}
	For the first term, we follow the derivation of \cref{eq:cs_fourier_sum}.
	The difference  $f_\eps - \PiN f_\eps$ is transformed into a sum of $\hat f$ over
	a grid using the  Poisson  summation formula. 
	Again, the Poisson summation formula is applicable to $f_\eps$ due to the following considerations. For
	$\hat f_\eps$, the decay condition holds true
	using essentially the same argument as for $\hat u_\eps$. As $\abs{\hat u}$ decays
	and $\hat\mollif_\eps$ decays exponentially,
	$f_\eps = \abs{\omega}^{2s}\hat\mollif_\eps \hat u$ satisfies the decay condition 
	required in \cref{thm:poisson_summation}. Further, $f_\eps$ is smooth, thus we
	only have to verify that $\abs{f_\eps}(x)$ decays fast enough when $\abs{x}\rightarrow \infty$. To verify this, note that $f_\eps(x) = \flap u_\eps(x)$ and for $x\not\in\supp u_\eps$,
	\begin{align*}
		\abs{\flap u_\eps(x)} &= C(d,s) \left|\int_{\supp u_\eps} \frac{u_\eps(y)}{\abs{x-y}^{d+2s}}\dy \right|
					\leq C(d,s) \norm{u}_{L^1(\Rd)} \sup\limits_{y\in\supp u_\eps} \abs{x-y}^{-d-2s} \\
				&\leq C \norm{u}_{L^1(\Rd)} \abs{x}^{-d-2s}
	\end{align*}
	for $\abs{x}$ large enough, from which we obtain the required condition.
	
	The square
	of this sum, then, is transformed into a product of two sums using the Cauchy-Schwarz inequality after
	multiplying each summand with an appropriately written unity. Viz, with $w(\omega) \coloneqq (1+\abs{\omega}^2)$,
	\begin{align}
		\label{eq:Hms_err_f_2d}
		\qquad&\int_{D_N} w(\omega)^{-s} \Big( \sum\limits_{k\neq 0} \hat f_\eps(2N\pi k + \omega)\Big)^2\dom \nonumber\\
			& \leq (2N\pi)^{d}\int\limits_{[-1/2,1/2]^d} w(2N\pi\omega)^{-s}
				\sum\limits_{k\neq 0} \hat \mollif(\eps2N\pi (k + \omega))^2|2N\pi(k+\omega)|^{2s} \nonumber\\
				&\hspace{.38\textwidth}\cdot\;\sum\limits_{k\neq 0} \hat f(2N\pi (k + \omega))^2 |2N\pi(k+\omega)|^{-2s}\dom\nonumber \\
		&\leq \sup\limits_{\omega\in[-1/2,1/2]^d} \frac{(2N\pi)^{2s}}{w(2N\pi\omega)^s} 
			\sum\limits_{k\neq 0}\hat\mollif(2\pi\underbrace{\eps N}_{=1} (k + \omega))^2|(k+\omega)|^{2s} \cdot \nonumber\\
			&\hspace{.3\textwidth}(2N\pi)^{d}\int\limits_{[-1/2,1/2]^d}
					\sum\limits_{k\neq 0} \hat f(2N\pi (k + \omega))^2 |2N\pi(k+\omega)|^{-2s}\dom \nonumber\\
			&\leq C\int\limits_{D_N^c}|\hat f|^2 \abs{\omega}^{-2s}\dom = 
					C\int\limits_{D_N^c}\abs{\omega}^{2s}\abs{\hat u}^2 \dom
	\end{align}
	The integral-factor can be bounded exactly as before and has the desired rate
	of $N^{s-t}$, see the proof of \Cref{lemma:sinc_interp_smoothing} again. The $\sup$-factor $C$ reads 
	\begin{equation}
		\label{eq:constant_sup_Hms_err}
		C = \sup\limits_{\omega\in[-1/2,1/2]^d} \frac{(2N\pi)^{2s}}{w(2N\pi\omega)^s} 
			\sum\limits_{k\neq 0}\hat\mollif\left(2\pi(k + \omega)\right)^2|(k+\omega)|^{2s}.
	\end{equation}
	For a general mollifier, this unfortunately diverges at the rate $N^{2s}$. However, if we
	construct a mollifier in a way that the sum in \cref{eq:constant_sup_Hms_err}
	is sufficiently small in a neighborhood of  ${\omega = 0}$, the factor is bounded.
	In \Cref{sub:construction_of_mollifiers} we show how
	appropriate mollifiers, with support $[-\eps,\eps]^d\subset B_{\sqrt{d}\eps}(0)$
	can be constructed, thus completing the proof.
\end{proof}

We proceed with \textbf{Step 2}.
One can not in all cases compute $(\mollif_\eps * f)(x_k)$
for $x_k \in \Omega$, depending on the domain. Problems arise in particular if
$\supp \mollif_\eps(\cdot - x_k)\not\subset \Omega$. For an illustration, see
\Cref{fig:domain_mollif_exceed}.

We first consider the trivial case. \textit{If $\Omega$ is a one-dimensional interval,} the problem can be scaled to the domain $\Omega = (0,1)$.
Then, for $k = 1,\ldots,N-1$, we see that $(\frac{k-1}{N}, \frac{k+1}{N})\subset \Omega$
and the smoothing operation can easily be computed. The case of $\Omega = (0,1)^d, d > 1$, 
works in a similar fashion.

For a more general domain $\Omega$, we set $\rho = \sqrt{d}h$ and define an enlarged $\Omega_\rho = B_\rho(0) + \Omega$ (in the sense of a Minkowski sum) such that $B_\rho(k/N)\subset\Omega_\rho$ for all $k/N\in\Omega$. We then choose an appropriate right-hand side $f_\rho$ on $\Omega_\rho$. The exact choice of $f_\rho$ is discussed in \Cref{sub:domain_approximation}. 

Once this is done, we define the solution $u_\rho\in\widetilde H^s(\Omega_\rho)$ as the solution to
\begin{equation}
	\label{eq:def_f_rho_bar_i}
	\flap u_\rho = f_\rho\text{ in }\Omega_\rho,\quad u_\rho \equiv 0 \text{ in }\Omega_\rho^c
\end{equation}
and set
\begin{equation}
	\label{eq:def_f_rho_bar_ii}
	\bar f_\rho(x) = \flap u_\rho(x)\text{ on } \R^d.
\end{equation}
Given this extension, we obtain our right-hand-side as the
$\sinc$-interpolation of the smooth approximation of the enlarged right-hand side, namely
\begin{equation}
	f_h \coloneqq \PiN \left(\mollif_\eps * \bar f_\rho\right) = \PiN \bar f_{\rho,\eps}.
\end{equation}
With this in mind, the numerator of the consistency error can be expressed as
\begin{align}
	a(u - u_h, w_h) &= a(u - u_\rho + u_\rho - u_{\rho,\eps} + u_{\rho, \eps} - u_h, w_h) \nonumber\\
		&\leq \bigg(\underbrace{\Hs{u - u_\rho}}_{\eqqcolon\text{I}} + 
				\underbrace{\Hs{u_\rho - u_{\rho,\eps}}}_{\eqqcolon\text{II}}\bigg)\cdot \Hs{w_h} 
				\nonumber\\
		&\qquad\qquad\qquad\qquad\qquad	+ \underbrace{\Hms{\bar f_{\rho, \eps} - \PiN \bar f_{\rho, \eps}}}_{\eqqcolon\text{III}}
			\cdot \norm{w_h}_{H^s(\Rd)}\label{eq:cons_term_three_split_i}.
\end{align}
Altogether, we have three terms that we need to bound with appropriate rates.
Both of the terms (II) and (III) can be bounded in terms of the $H^t$-seminorm of $u_\rho$
with the rate $h^{t-s}$ using \Cref{lemma:sinc_interp_smoothing,lemma:sinc_interp_smoothing_f}
and setting $\eps = h$. What is still open is the term (I), whose bound we 
discuss in the following \cref{sub:domain_approximation}.

\subsection{Domain Enlargement}
\label{sub:domain_approximation}
We have to choose $f_\rho$ such that if $u$ and $u_\rho$ solve
\[
	\flap u = f\text{ in }\Omega,\quad u\equiv 0 \text{ in }\Omega^c
\]
and
\[
	\flap u_\rho = f_\rho\text{ in }\Omega_\rho,\quad u\equiv 0 \text{ in }\Omega^c, 
\]
then
\begin{equation}
    \Hs{u-u_\rho} \leq C \rho^{1/2}.
\end{equation}
As we assumed that $f \in C^{\beta}(\overline\Omega)$ and $\Omega$ Lipschitz, we know that
there exists $f_\rho \in C^{\beta}(\Omega_\rho)$ such that $f = f_\rho$ in $\Omega$, see \cite[Lemma 6.37]{Gilbarg2001}.
For this $f_\rho$, we obtain $\bar f_\rho$ on $\Rd$ as described in \cref{eq:def_f_rho_bar_i,eq:def_f_rho_bar_ii}. For the estimate of $\Hs{u-u_\rho}$, we want 
to prove the following Theorem.

\begin{theorem}
	\label{thm:u_uR_Hs}
	Let $\Omega_\rho$ be the extended domain  and $f_\rho\in L^\infty(\Omega_\rho)$ 
	a Hölder continuous extension of the given right-hand side $f$ to the domain $\Omega_\rho$ as discussed before . Let $u_\rho$
	solve the exterior value problem
	\begin{equation}
		\flap u_\rho = f_\rho\text { in }\Omega_\rho,\quad u_\rho = 0\text{ in }\Omega^c
	\end{equation}
	and $u$ the solution to \cref{eq:bvp}. Then,
	\begin{equation}
		\Hs{u-u_\rho} \leq C h^{1/2},
	\end{equation}
	where $C$ is a constant that depends on $f_\rho,\Omega\text{ and }d$.
\end{theorem}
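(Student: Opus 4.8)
\emph{Approach.} The plan is to move to the enlarged domain and test the error equation against the error itself. Put $w \coloneqq u - u_\rho$. Since $u$ is supported in $\overline\Omega\subset\overline{\Omega_\rho}$ and $u_\rho\in\widetilde H^s(\Omega_\rho)$, we have $w\in\widetilde H^s(\Omega_\rho)$, so $w$ is an admissible test function for both weak problems. Using $\flap u = \bar f$ on $\Rd$ (with $\bar f = f$ on $\Omega$ and $\bar f = \Ns u$ on $\Omega^c$; see \cref{eq:def_f_bar}), the integration-by-parts identity of \cite{Dipierro2017}, and $f_\rho = f$ on $\Omega$, the contributions on $\Omega$ cancel and one obtains, for every $v\in\widetilde H^s(\Omega_\rho)$,
\begin{equation}
	a(w,v) = \int_{\Omega_\rho\setminus\Omega}\big(\Ns u - f_\rho\big)\,v\dx .
\end{equation}
(The integrand is integrable because $\Ns u$ has at worst an integrable boundary singularity, quantified below, and $f_\rho\in L^\infty(\Omega_\rho)$.) Taking $v=w$ and noting $u\equiv 0$ on $\Omega_\rho\setminus\Omega$, so that $w = -u_\rho$ there,
\begin{equation}
	\Hs{w}^2 = a(w,w) \le \int_{\Omega_\rho\setminus\Omega}\big|\Ns u - f_\rho\big|\,\abs{u_\rho}\dx .
\end{equation}
Everything now reduces to controlling this integral over the shell $\Omega_\rho\setminus\Omega$, whose width is $\rho = \sqrt d\,h$.

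\emph{Bounding $u_\rho$ on the shell.} A point $x\in\Omega_\rho\setminus\Omega$ lies close to $\partial\Omega$, but it is \emph{far} from $\partial\Omega_\rho$. Take $y\in\partial\Omega$ nearest to $x$ (so $\abs{x-y}<\rho$) and an exterior ball $B_r(z)\subset\Omega^c$ touching $\partial\Omega$ at $y$, whose radius $r$ is fixed by \Cref{ass:dom}; for $h$ small enough that $\rho<r$, the point $p \coloneqq y + \rho\,(z-y)/\abs{z-y}$ satisfies $\abs{p-z} = r-\rho$, hence $\dist(p,\Omega)\ge\dist(p,B_r(z)^c) = \rho$, so $p\in\Omega_\rho^c$ and $\abs{x-p}\le\abs{x-y}+\rho<2\rho$. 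Thus $\dist(x,\Omega_\rho^c)\le 2\rho$ on the shell, and since $\Omega_\rho$ satisfies an exterior ball condition with radius bounded below uniformly in $\rho<R$ (\Cref{ass:dom}), \Cref{thm:bound_u_boundary} applied to $u_\rho$ gives $\abs{u_\rho(x)}\le C\,\norm{f_\rho}_{L^\infty(\Omega_\rho)}\,\dist(x,\Omega_\rho^c)^s\le C\rho^s$ there.

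\emph{Bounding $\Ns u$ on the shell and conclusion.} From $\Ns u(x) = -C(d,s)\int_\Omega u(y)\abs{x-y}^{-d-2s}\dy$, the boundary decay $\abs{u(y)}\le\cbound\,\dist(y,\Omega^c)^s$ of \Cref{thm:bound_u_boundary} (see \Cref{rem:def_cbound_csob}), the elementary bound $\dist(y,\Omega^c)\le\abs{x-y}$ (valid since $x\in\Omega^c$), and $\abs{x-y}\ge\dist(x,\Omega)$ for all $y\in\Omega$, integration in polar coordinates yields $\abs{\Ns u(x)}\le C\,\cbound\,\dist(x,\Omega)^{-s}$. Inserting the two bounds into the shell integral, using $\abs{\Omega_\rho\setminus\Omega}\le C\rho$ and the layer-cake estimate $\int_{\Omega_\rho\setminus\Omega}\dist(x,\partial\Omega)^{-s}\dx\le C\rho^{1-s}$ — both standard consequences of $\Omega$ being Lipschitz — gives
\begin{equation}
	\Hs{w}^2 \le C\rho^s\Big(\int_{\Omega_\rho\setminus\Omega}\dist(x,\partial\Omega)^{-s}\dx + \norm{f_\rho}_{L^\infty(\Omega_\rho)}\abs{\Omega_\rho\setminus\Omega}\Big) \le C\rho \le Ch,
\end{equation}
and the claim follows upon taking square roots, with $C$ depending on $f_\rho$, $\Omega$, $d$ (and $s$).

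\emph{Main obstacle.} The crux is the geometric fact $\dist(x,\Omega_\rho^c)\le 2\rho$ on the shell: it upgrades $u_\rho$ there from the trivial $O(1)$ bound to the correct $O(\rho^s)$, which is essential to reach the rate $h^{1/2}$ for all $s\in(0,1)$ (the crude bound would only give $h^{1-s}$, suboptimal for $s\ge\tfrac12$). The second delicate point is that $\Ns u$ genuinely blows up like $\dist(\cdot,\Omega)^{-s}$ as one approaches $\partial\Omega$ from outside, so the shell integral is only \emph{barely} convergent; both points are exactly where the Lipschitz and exterior-ball hypotheses on $\Omega$, uniform under enlargement (\Cref{ass:dom}), enter.
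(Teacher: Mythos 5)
Your proposal is correct and follows essentially the same route as the paper: both write $\Hs{u-u_\rho}^2$ as an integral of $(\flap u - f_\rho)\,u_\rho$ over the shell $\Omega_\rho\setminus\Omega$, bound $\abs{u_\rho}\lesssim \rho^s$ there via the boundary-decay estimate of \Cref{thm:bound_u_boundary} on $\Omega_\rho$, bound $\abs{\Ns u}\lesssim \dist(\cdot,\partial\Omega)^{-s}$ exactly as in \Cref{lemma:outer_flap_behav}, and integrate over the thin layer to obtain $O(h)$. Your explicit exterior-ball argument for $\dist(x,\Omega_\rho^c)\le 2\rho$ on the shell is a welcome justification of a step the paper only asserts.
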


In order to prove this, we first prove the following lemma.
\begin{figure}
	\begin{center}
		\includegraphics[height=5cm]{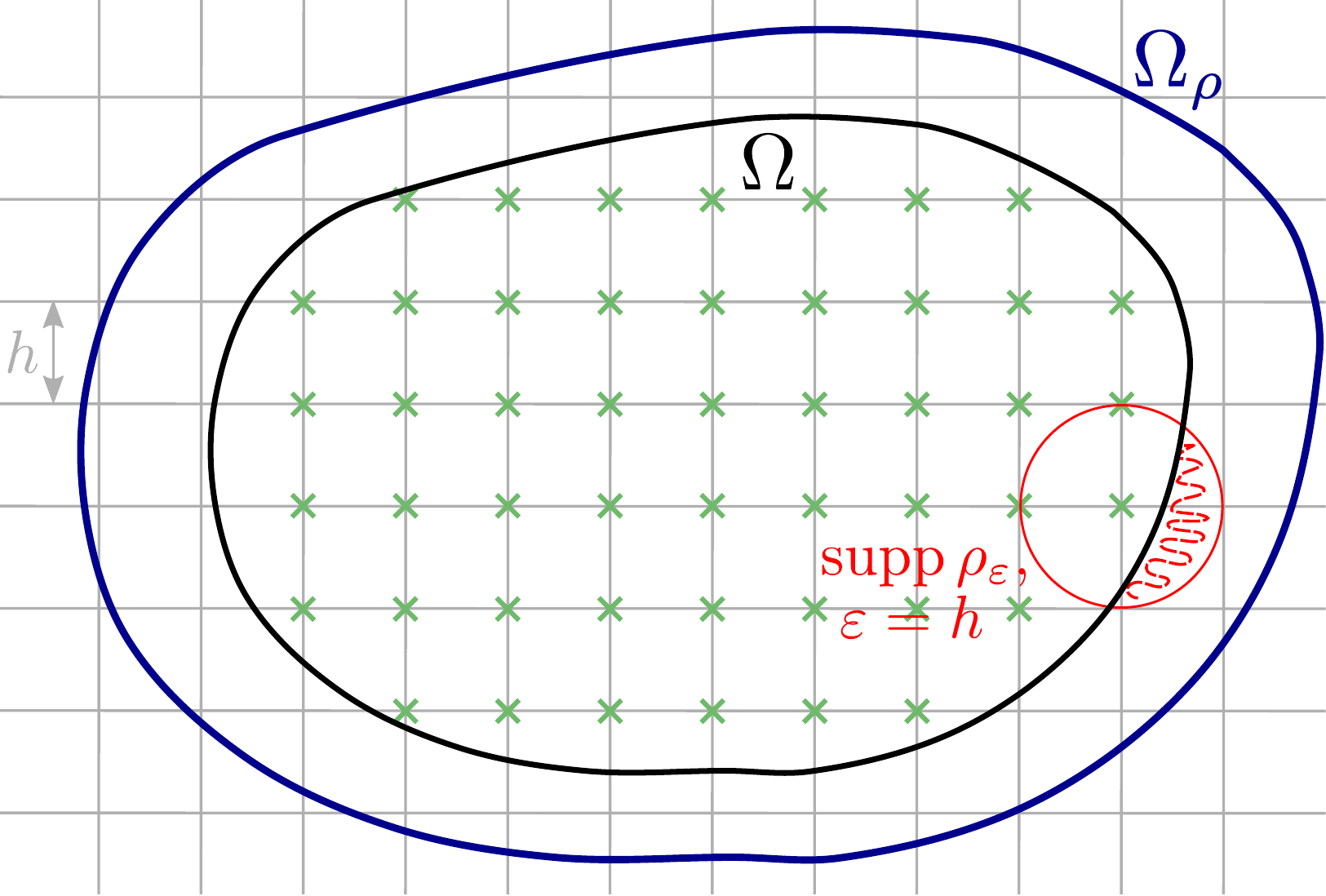}
	\end{center}
\caption{In more than one dimension, it may happen that the support of the smoothing
			kernel exceeds the boundary of $\Omega$.}
\label{fig:domain_mollif_exceed}
\end{figure}

\begin{lemma}
	\label{lemma:outer_flap_behav}
	Let $u$ solve \cref{eq:bvp} on a bounded Lipschitz domain that fulfills the
	exterior ball condition and $f\in L^{\infty}(\Omega)$. For $x\in\Omega^c$, let
	\begin{equation}
		\label{eq:def_f_outside_Om}
		\bar f(x) = \Ns u(x) = -C(d,s)\int_\Omega \frac{u(y)}{\abs{x-y}^{d+2s}}\dy. 
	\end{equation}
	Then,
	\begin{equation}
		\abs{\bar f(x)} \leq \cbound(\Omega, s)\cdot \delta(x)^{-s},\quad \delta(x) = \dist(x, \delOm)
	\end{equation}
	where $\cbound(\Omega, s)$ is the constant from \Cref{thm:bound_u_boundary}.
\end{lemma}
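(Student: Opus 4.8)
The plan is to feed the boundary decay estimate for $u$ from \Cref{thm:bound_u_boundary} into the integral defining $\Ns u$ and then to estimate the resulting weakly singular integral by elementary geometry.

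First, since $\Omega$ is bounded, satisfies the exterior ball condition, and $f\in L^\infty(\Omega)$, \Cref{thm:bound_u_boundary} applies and gives $\abs{u(y)}\le C(\Omega,s)\norm{f}_{L^\infty(\Omega)}\dist(y,\Omega^c)^s$ for every $y$. Substituting this into \cref{eq:def_f_outside_Om}, the claim reduces to showing that for $x\in\Omega^c$ (we may assume $\delta(x)=\dist(x,\delOm)>0$, otherwise there is nothing to prove)
\[
	\int_\Omega \frac{\dist(y,\Omega^c)^s}{\abs{x-y}^{d+2s}}\dy \le C(d,s)\,\delta(x)^{-s}.
\]

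The estimate of this integral rests on two elementary facts, valid for $x$ in the exterior of $\Omega$ and $y\in\Omega$: the segment $[x,y]$ meets $\delOm$ at some point $z$, so that on one hand $\abs{x-y}\ge\abs{x-z}\ge\delta(x)$, and on the other hand $\dist(y,\Omega^c)=\dist(y,\delOm)\le\abs{y-z}\le\abs{x-y}$. Hence $\dist(y,\Omega^c)^s\le\abs{x-y}^s$, the integrand is dominated by $\abs{x-y}^{-d-s}$, and since moreover $\Omega\subseteq\{y:\abs{x-y}\ge\delta(x)\}$, enlarging the domain of integration and passing to polar coordinates centered at $x$ yields
\[
	\int_\Omega \frac{\dist(y,\Omega^c)^s}{\abs{x-y}^{d+2s}}\dy
		\le \int_{\{\abs{x-y}\ge\delta(x)\}} \abs{x-y}^{-d-s}\dy
		= \abs{\mathbb S^{d-1}}\int_{\delta(x)}^\infty r^{-1-s}\dr
		= \frac{\abs{\mathbb S^{d-1}}}{s}\,\delta(x)^{-s},
\]
the last integral converging because the loss of powers is $d+s>d$ rather than $d+2s$. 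Collecting constants gives $\abs{\bar f(x)}\le \tfrac{C(d,s)\abs{\mathbb S^{d-1}}}{s}\,C(\Omega,s)\norm{f}_{L^\infty(\Omega)}\,\delta(x)^{-s}$, which is the asserted bound, with the right-hand constant absorbed into $\cbound(\Omega,s)$ in the sense of \Cref{rem:def_cbound_csob}.

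There is no real obstacle here; the only points that need a little care are the simultaneous geometric comparisons $\abs{x-y}\ge\delta(x)$ and $\dist(y,\Omega^c)\le\abs{x-y}$, and the observation that it is precisely the cancellation of $s$ powers between the numerator $\dist(y,\Omega^c)^s$ and the singularity $\abs{x-y}^{-d-2s}$ that both renders the integral finite and produces the $\delta(x)^{-s}$ blow-up rate. (A sharper constant could be obtained by keeping $\abs{x-y}-\delta(x)$ in place of $\abs{x-y}$ in the second comparison, but this refinement is not needed.)
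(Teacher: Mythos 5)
Your proof is correct and follows essentially the same route as the paper: insert the boundary decay bound of \Cref{thm:bound_u_boundary}, use $\dist(y,\Omega^c)\le\abs{x-y}$ to reduce the integrand to $\abs{x-y}^{-d-s}$, and evaluate the remaining integral in polar coordinates over $\{\abs{x-y}\ge\delta(x)\}$, obtaining the same constant $\abs{\mathbb S^{d-1}}/s$. The only cosmetic differences are that you justify the geometric comparisons via the intersection point of the segment $[x,y]$ with $\delOm$ and integrate out to infinity, whereas the paper truncates at a large radius $R$ with $\Omega\subset B_R$; both are equivalent.
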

\begin{proof}
	We know that for $y\in\Omega$ (see \cref{eq:bound_u_boundary}) 
	\begin{equation} 
		u(y) \leq C(\Omega, s) \norm{f}_{L^\infty(\Omega)} \dist(y, \delOm)^s.
	\end{equation}
	Further, as $x\not\in\Omega$, 
	\begin{equation}
		\dist(y,\delOm) = \inf\limits_{z\in\delOm} \abs{y-z} \leq \abs{y-x},
	\end{equation}
	thus
	\begin{equation}
		\label{eq:bound_f_integrand_outer}
		\frac{u(y)}{\abs{x-y}^{d+2s}} 
			\leq C \frac{\dist(y, \delOm)^{s}}{\abs{x-y}^{d+2s}}
			\leq C \frac{\abs{x-y}^{s}}{\abs{x-y}^{d+2s}}
			= C \frac{1}{\abs{x-y}^{d+s}}. 
	\end{equation}
	 We plug this into \cref{eq:def_f_outside_Om} to obtain
	\begin{equation}
		\abs{\bar{f}(x)} \leq C \int_\Omega \frac{1}{\abs{x-y}^{d+s}}\dy. 
	\end{equation}
	To compute this integral, assume w.l.o.g $x = 0\not\in\overline{\Omega}$ and choose $R > 0$ large enough
	such that $\Omega\subset B_R(0)$, then 
	
	\begin{align}
		\int_\Omega \frac{1}{\abs{x-y}^{d+s}}\dy = \int_{B_R}{\abs{y}^{-(d+s)}} \chi_\Omega(y)\dy 
			&= \int_0^R\int_{\mathbb S^{d-1}} {\abs{r\xi}^{-(d+s)}} \chi_\Omega(r\xi)\doxi r^{d-1}\dr \nonumber\\
			&= \int_0^R r^{-1-s}\int_{\mathbb S^{d-1}} 
			\underbrace{\chi_\Omega(r\xi)}_{=0\text{ if }r<\delta(x)}\doxi \dr\nonumber\\
			&\leq \abs{\mathbb S^{d-1}} \frac{1}{-s} \left( R^{-s} - \delta(x)^{-s} \right) \leq  \frac{\abs{\mathbb S^{d-1}}}{s}\cdot \delta(x)^{-s}.
	\end{align}
	
\end{proof}

We are ready to prove \Cref{thm:u_uR_Hs} now.
\begin{proof}[Proof (of \Cref{thm:u_uR_Hs})]
	We split the error into two integrals:
	\begin{align}
		\Hs{u-u_\rho}^2 &\leq\int_\Rd \underbrace{|\flap(u-u_\rho)|}_{=0 \text{ in }\Omega} 
											\cdot \underbrace{|u-u_\rho|}_{=0 \text{ in }\Omega_\rho^c}\dx\nonumber \\
			&= \int_{\Omega_\rho\setminus\Omega}
			\big|\flap u-\underbrace{\flap u_\rho}_{=f_\rho}\big|
							\cdot |\underbrace{u}_{=0} - u_\rho|\dx \nonumber\\
			&\leq \int_{\Omega_\rho\setminus\Omega} |f_\rho|\cdot |u_\rho| \dx
					+ \int_{\Omega_\rho\setminus\Omega} |\flap u|\cdot |u_\rho|\dx\label{eq:u_uR_Hs_split}.
	\end{align}
	We bound the integrals individually and obtain for the first integral
	\begin{align}
		\int_{\Omega_\rho\setminus\Omega} |f_\rho|\cdot \abs{u_\rho} 
	    &\leq \norm{f_\rho}_{L^{\infty}(\Omega_\rho)}\cdot
					\int_{\Omega_\rho\setminus\Omega} \abs{u_\rho}\dx\nonumber \\
	    &\leq\norm{f_\rho}_{L^{\infty}(\Omega_\rho)}\cdot
			\cbound(\Omega_\rho, f_\rho)\sup\limits_{x\in\Omega_\rho}
				\underbrace{\dist(x,\partial\Omega_\rho)^s}_{\leq h^s}
					\int_{\Omega_\rho\setminus\Omega} 1\dx \nonumber\\
		&\leq \norm{f_\rho}_{L^{\infty}(\Omega_\rho)}\cdot\cbound(\Omega_\rho, f_\rho)\cdot h^s \cdot C(\Omega) h \label{eq:u_uR_Hs_split_i}
	\end{align}
	because, with 
	\begin{equation}
		\Omega_\rho\setminus\Omega = \bigcup\limits_{t\in(0,h)} \Gamma_t,\quad
			\Gamma_t\coloneqq\left\{
				x\in\Omega_\rho\setminus\Omega\, | \, \dist(x,\delOm) = t
					\right\},
	\end{equation}
	we have
	\begin{align}
	\int_{\Omega_\rho\setminus\Omega} 1\dx = \int_0^h \int_{\Gamma_t} 1 \dx\dt
	    \leq \int_0^h\abs{\Gamma_h} h \dt \leq C(\Omega)\cdot h,
	\end{align}
	where $C(\Omega)$ is a constant that depends only on $\Omega$.
	We reparameterize the second integral using the same parameterization of $\Omega_\rho\setminus\Omega$
	and \Cref{lemma:outer_flap_behav} to obtain 
	\begin{align}
		\int_{\Omega_\rho\setminus\bar\Omega} \abs{\flap u}\cdot \abs{u_\rho}\dx &=
			\int\limits_{0}^h\int_{\Gamma_t} \abs{\flap u}\cdot \abs{u_\rho} \dx\dt 
			\leq \cbound(\Omega_\rho, f_\rho)\cdot h^{s} \int\limits_{0}^h \int_{\Gamma_t} t^{-s} \dx\dt \nonumber \\
			&= \cbound(\Omega_\rho, f_\rho)\cdot h^s \int\limits_{0}^h\abs{\Gamma_t} t^{-s}\dt \nonumber\\
			& \leq \cbound(\Omega_\rho, f_\rho)\cdot h^s C(\Omega) h^{1-s} 
			= \cbound(\Omega_\rho, f_\rho) C(\Omega) h  \label{eq:u_uR_Hs_split_ii}
	\end{align}
We plug \cref{eq:u_uR_Hs_split_i,eq:u_uR_Hs_split_ii} into \cref{eq:u_uR_Hs_split} to obtain
\begin{align}
	\Hs{u - u_\rho}^2 &\leq C(\Omega)\cdot \cbound(\Omega, f_\rho) \cdot h^{s+1} 
			+ C(\Omega)\cdot \cbound(\Omega, f_\rho)\cdot h \nonumber\\
				&\leq 2\, C(\Omega)\cdot \cbound(\Omega, f_\rho)\cdot h
\end{align}

	where we used \cref{ass:dom} which ensures that $\cbound(\Omega_\rho, f\rho)\leq C\cdot \cbound(\Omega, f\rho)$.
This completes the proof.

\end{proof}

\section{Main theorems}
\label{sec:main_theorems}
Using the results proven so far,  we are finally ready to prove our main theorems.
\begin{theorem}
	\label{thm:main_i}
	Let $\Omega$ be a bounded Lipschitz domain that fulfills Assumption \ref{ass:dom}.  Let $f$ be given as in \Cref{thm:sobolev_reg_u_lipschitz} and
	$u$ solve the exterior value problem \cref{eq:bvp} on $\Omega$.
	Let $u_h\in \V_h(\Omega)$ solve the discrete problem, see
	\cref{eq:formulation_collocation}, where the right-hand side is chosen as
	\[
		f_k = \bar f_{\rho,\eps}(x_k)
	\]
	for $k/N\in\Omega$. Let $\rho = \sqrt{d}h$ and $\Omega_\rho$, $f_\rho$, $u_\rho$
	and $\bar f_\rho$ as described in \Cref{sub:domain_approximation} 
	and $\bar f_{\rho,\eps}$ is obtained as a mollification of $\bar f_\rho$,
	see \cref{eq:def_f_bar}, with the mollifier constructed in \cref{sub:construction_of_mollifiers}.
	Then,  with $\theta = 1/\abs{\log(h)}$ as in \cite{Borthagaray2017},
		\begin{align*}
			\norm{u-u_h}_{H^{s}(\Rd)} &\leq C\cdot\Big(
				\csob(\Omega, f) + \cbound(\Omega, f) + 
				\csob(\Omega_\rho, f_\rho) + \cbound(\Omega_\rho, f_\rho) \Big)
				\cdot \frac{h^{1/2-\theta}}{\theta} \\
				&=C\cdot\Big(
				\csob(\Omega, f) + \cbound(\Omega, f) + 
				\csob(\Omega_\rho, f_\rho) + \cbound(\Omega_\rho, f_\rho) \Big) \abs{\log(h)} h^{1/2}
		\end{align*}
	where $C$ depends on $\Omega, s$ and $d$.
\end{theorem}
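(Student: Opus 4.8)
The plan is to assemble the ingredients prepared in \Cref{sec:formulation_sinc_galerkin_method,sec:approximation_errors}. First I would invoke the abstract Strang estimate \Cref{thm:abstract_error} with $H = H^s(\Rd)$, $V = \V$ and $V_h = \V_h(\Omega)$: the bilinear form $a(\cdot,\cdot)$ is continuous on $H^s(\Rd)$ with constant $1$ by Cauchy--Schwarz in the Fourier representation \cref{eq:def_a_on_V}, and \Cref{lemma:poincare} makes it coercive on $\V_h(\Omega)$ with a constant $\gamma = \gamma(\Omega,d,s)>0$ independent of $N$, so the factors $1+C/\gamma$ and $1/\gamma$ in \cref{eq:full_strang_error_equation} are bounded by a constant depending only on $\Omega,d,s$. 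It then remains to estimate the approximation error $\inf_{v_h\in\V_h(\Omega)}\Hsnorm{u-v_h}$ and the consistency error $\sup_{w_h}\abs{a(u-u_h,w_h)}/\norm{w_h}_{H^s(\Rd)}$ separately.

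Since \Cref{ass:dom} in particular makes $\Omega$ a bounded Lipschitz domain satisfying the exterior ball condition and $f$ is as in \Cref{thm:sobolev_reg_u_lipschitz}, \Cref{prop:bound_approximation_error} bounds the approximation error by $\frac{\capprox(\Omega,f)}{\theta}h^{1/2-\theta}$, and inspecting its proof shows $\capprox(\Omega,f)$ is controlled by $\csob(\Omega,f)+\cbound(\Omega,f)$ up to a factor depending only on $d,s,\mollif$.

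For the consistency error I would reuse the three--term decomposition already set up in \cref{eq:cons_term_three_split_i}: with $a(u-u_h,w_h)=(\bar f-f_h,w_h)_{L^2(\Rd)}$, $f_h=\PiN\bar f_{\rho,\eps}$, $u_\rho\in\widetilde H^{s}(\Omega_\rho)$ solving $\flap u_\rho=f_\rho$ in $\Omega_\rho$, and $u_{\rho,\eps}=\mollif_\eps*u_\rho$ (so that $\flap u_{\rho,\eps}=\bar f_{\rho,\eps}$), one applies Cauchy--Schwarz for $a(\cdot,\cdot)$ to the differences $u-u_\rho$ and $u_\rho-u_{\rho,\eps}$, uses $a(u_{\rho,\eps}-u_h,w_h)=(\bar f_{\rho,\eps}-\PiN\bar f_{\rho,\eps},w_h)_{L^2(\Rd)}$ together with $\Hs{w_h}\le\norm{w_h}_{H^s(\Rd)}$, and reduces to bounding $\Hs{u-u_\rho}+\Hs{u_\rho-u_{\rho,\eps}}+\Hms{\bar f_{\rho,\eps}-\PiN\bar f_{\rho,\eps}}$. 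The first summand is exactly \Cref{thm:u_uR_Hs}, with a bound $C\,h^{1/2}$ whose constant is controlled (via \Cref{ass:dom}) in terms of $\cbound(\Omega_\rho,f_\rho)$. For the other two I would invoke \Cref{thm:sobolev_reg_u_lipschitz} on $\Omega_\rho$ --- which by \Cref{ass:dom} is, for $\rho=\sqrt d h<R$, Lipschitz with uniform Lipschitz constant and exterior ball radius and carries a $C^\beta$--extension $f_\rho$ of $f$ --- to get $u_\rho\in\widetilde H^{t}(\Omega_\rho)$ with $t=s+1/2-\theta$ and $\Ht{u_\rho}\le\frac{\csob(\Omega_\rho,f_\rho)}{\theta}$, and then apply \Cref{lemma:sinc_interp_smoothing}(i) and \Cref{lemma:sinc_interp_smoothing_f}(ii) with $\eps=h=1/N$ to bound both by $C(d,s,\mollif)\,h^{t-s}\Ht{u_\rho}\le C(d,s,\mollif)\frac{\csob(\Omega_\rho,f_\rho)}{\theta}h^{1/2-\theta}$.

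Finally I would add the three contributions, use $h^{1/2}=h^{1/2-\theta}h^{\theta}\le\frac{h^{1/2-\theta}}{\theta}$ for $h$ small (as $0<\theta<1$) to absorb the term from \Cref{thm:u_uR_Hs}, and collect constants into one $C(\Omega,d,s)$ for the fixed mollifier, obtaining the claimed bound with $\frac{h^{1/2-\theta}}{\theta}$; specializing $\theta=1/\abs{\log h}$ gives $h^{-\theta}=\e^{-\log(h)/\abs{\log h}}=\e$, hence $\frac{h^{1/2-\theta}}{\theta}=\e\,\abs{\log h}\,h^{1/2}$, which after absorbing $\e$ into $C$ is the second displayed form. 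The step I expect to need the most care is purely bookkeeping rather than analysis: using \Cref{ass:dom} to keep the regularity constants $\csob(\Omega_\rho,f_\rho)$ and $\cbound(\Omega_\rho,f_\rho)$ on the enlarged domains bounded uniformly as $\rho=\sqrt d h\to0$, and checking that the two $\theta^{-1}$ factors --- one from the approximation error, one from the terms II and III above --- combine into the single $\theta^{-1}$ appearing in the statement rather than $\theta^{-2}$.
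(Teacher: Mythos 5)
Your proposal is correct and follows essentially the same route as the paper: the Strang estimate of \Cref{thm:abstract_error}, \Cref{prop:bound_approximation_error} for the approximation error, and the three-term splitting of the consistency term handled by \Cref{thm:u_uR_Hs}, \Cref{lemma:sinc_interp_smoothing} and \Cref{lemma:sinc_interp_smoothing_f} (the paper phrases the middle term as $\Hms{\bar f_\rho - \bar f_{\rho,\eps}}$ rather than $\Hs{u_\rho - u_{\rho,\eps}}$, but these are interchangeable and both appear in \cref{eq:cons_term_three_split_i}). Your bookkeeping of the $\theta^{-1}$ factors and the substitution $\theta = 1/\abs{\log h}$ is also exactly what the paper does.
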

\begin{proof}
	This is a direct consequence of the results proven so far. Using the abstract error estimate,
	see \Cref{thm:abstract_error}, we obtain that 
	\begin{equation}
		\norm{u-u_h}_{H^{s}(\Rd)} \leq
		\left( 1 + \frac{C}{\gamma}  \right) \inf\limits_{v_h\in \V_h(\Omega)} \norm{u - v_h}_{H^s(\Rd)}
			+ \frac{1}{\gamma}  \sup\limits_{w_h\in \V_h(\Omega)\setminus\{0\}} \frac{\abs{a(u-u_h, w_h)}}{\norm{w_h}_{H^s(\Rd)}}
	\end{equation}
	The bound for the first term is a direct consequence of \Cref{prop:bound_approximation_error}.
	The numerator of the second term can be bounded after adding appropriate zeros
	and using the triangle inequality by
	\begin{equation}
		\abs{a(u-u_h, w_h)} \leq  \left(\Hs{u-u_\rho} + \Hms{\bar f_\rho - \bar f_{\rho,\eps}}
			+ \Hms{\bar f_{\rho,\eps} - \PiN f_{\rho,\eps}}\right)\cdot \norm{w_h}_{H^s(\Rd)}
	\end{equation}
	Now, the desired bound follows by applying \Cref{thm:u_uR_Hs} to the first term
	of the sum and \Cref{lemma:sinc_interp_smoothing_f} to the second and third term,
	and further noticing that we set $\rho = \sqrt{d}h$.
\end{proof}

We acknowledge that although, in principle, \cref{thm:main_i} gives an appropriate
error bound, actually obtaining a right-hand side with the extension-and-mollification
approach may be challenging in practice. However, if we assume that $f$ has some
more regularity in $\Omega$, we can enhance the previous result and show that
we can indeed obtain the right-hand side of the discrete system by simple direct
sampling, i.e., setting $f_k = f(k/N)$ for $k/N\in\Omega$.
\begin{theorem}
	\label{thm:main_ii}
	Let the setting be as in \Cref{thm:main_i} and let, additionally, $f\in C^{1/2}(\overline{\Omega})$.
	Let the entries of the right-hand side in \cref{eq:formulation_collocation} be
	chosen as
	\[
		f_k = f(k/N)
	\]
	for $k/N\in\Omega$. Then,
	\begin{align*}
		\norm{u-u_h}_{H^{s}(\Rd)} &\leq C\cdot\Big(
			\csob(\Omega, f) + \cbound(\Omega, f) + 
			\csob(\Omega_\rho, f_\rho) + \cbound(\Omega_\rho, f_\rho) \Big)
			\cdot \frac{h^{1/2-\theta}}{\theta}\\
	&=C\cdot\Big(
			\csob(\Omega, f) + \cbound(\Omega, f) + 
			\csob(\Omega_\rho, f_\rho) + \cbound(\Omega_\rho, f_\rho) \Big)  \abs{\log(h)} h^{1/2}
	\end{align*}
	where $C$ depends on $\Omega, s$ and $d$.
\end{theorem}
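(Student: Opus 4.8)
The plan is to deduce \Cref{thm:main_ii} from \Cref{thm:main_i} by a discrete stability estimate, without re-running the Strang-type argument. Let $u_h^\star\in\V_h(\Omega)$ denote the discrete solution of \cref{eq:formulation_collocation} whose right-hand side has entries $\bar f_{\rho,\eps}(x_k)$, $x_k=k/N\in\Omega$; this is precisely the solution to which \Cref{thm:main_i} applies, so
\[
	\norm{u-u_h^\star}_{H^{s}(\Rd)}\le C\big(\csob(\Omega,f)+\cbound(\Omega,f)+\csob(\Omega_\rho,f_\rho)+\cbound(\Omega_\rho,f_\rho)\big)\,\abs{\log h}\,h^{1/2}.
\]
Since the $u_h$ of \Cref{thm:main_ii} solves the same discrete system but with right-hand side entries $f(x_k)$, it is enough to establish the stability bound $\norm{u_h^\star-u_h}_{H^{s}(\Rd)}\le C\,\norm{f}_{C^{1/2}(\overline\Omega)}\,h^{1/2}$ and conclude by the triangle inequality; the extra $O(h^{1/2})$ term is dominated by the $\abs{\log h}\,h^{1/2}$ rate, and $\norm{f}_{C^{1/2}(\overline\Omega)}$ controls the quantities $\cbound,\csob$ appearing above.

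For the stability bound, subtract the two discrete equations: for all $v_h\in\V_h(\Omega)$ one has $a(u_h^\star-u_h,v_h)=(g_h,v_h)_{L^2(\Rd)}$, where $g_h:=\sum_{k\in\Omega_h}\big(\bar f_{\rho,\eps}(x_k)-f(x_k)\big)\varphi^N_k\in\V_h(\Omega)$. Testing with $v_h=u_h^\star-u_h$ and using coercivity of $a$ on $\V_h(\Omega)$ (with constant $\gamma=\gamma(\Omega,d,s)$ independent of $N$, by \Cref{lemma:poincare}), together with $\abs{(g_h,v_h)_{L^2(\Rd)}}\le\Hms{g_h}\,\norm{v_h}_{H^{s}(\Rd)}$ and $\Hms{g_h}\le C\,\Lt{g_h}$, gives $\norm{u_h^\star-u_h}_{H^{s}(\Rd)}\le C\,\Lt{g_h}$. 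By \cref{eq:sinc_L2_l2_identity} and $\#\Omega_h\le C_\Omega N^d$,
\[
	\Lt{g_h}^2=N^{-d}\sum_{k\in\Omega_h}\abs{\bar f_{\rho,\eps}(x_k)-f(x_k)}^2\le C_\Omega\max_{k\in\Omega_h}\abs{\bar f_{\rho,\eps}(x_k)-f(x_k)}^2,
\]
so the whole matter reduces to the pointwise estimate $\abs{\bar f_{\rho,\eps}(x_k)-f(x_k)}\le C\,\norm{f}_{C^{1/2}(\overline\Omega)}\,h^{1/2}$ for $x_k\in\Omega$.

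This pointwise estimate is the only genuinely new ingredient, and also the main obstacle. With $\eps=h$, $\rho=\sqrt d\,h$, and the mollifier of \Cref{sub:construction_of_mollifiers} supported in $[-\eps,\eps]^d\subset B_\rho(0)$, for every grid point $x_k\in\Omega$ the set $x_k+\supp\eta_\eps$ lies in $B_\rho(x_k)\subset\Omega_\rho$; hence $\bar f_\rho=f_\rho$ on that set, where $f_\rho\in C^{1/2}(\Omega_\rho)$ is the Hölder extension of $f$ with $f_\rho=f$ on $\Omega$ (so $f_\rho(x_k)=f(x_k)$). Therefore
\[
	\abs{\bar f_{\rho,\eps}(x_k)-f(x_k)}=\Big|\int\eta_\eps(x_k-y)\big(f_\rho(y)-f_\rho(x_k)\big)\dy\Big|\le[f_\rho]_{C^{1/2}(\Omega_\rho)}\,(\sqrt d\,h)^{1/2},
\]
and $[f_\rho]_{C^{1/2}(\Omega_\rho)}\le C\,\norm{f}_{C^{1/2}(\overline\Omega)}$ uniformly for $\rho<R$ by the extension result \cite[Lemma 6.37]{Gilbarg2001} and \Cref{ass:dom}. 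Chaining the estimates yields $\norm{u_h^\star-u_h}_{H^{s}(\Rd)}\le C\,\norm{f}_{C^{1/2}(\overline\Omega)}\,h^{1/2}$, and combining with \Cref{thm:main_i} via the triangle inequality gives the claim. The delicate point is the containment $x_k+\supp\eta_\eps\subset\Omega_\rho$: for grid points within distance $h$ of $\partial\Omega$ the mollification ball $B_\eps(x_k)$ does leave $\Omega$, and it is only the enlargement to $\Omega_\rho$ with $\rho=\sqrt d\,h$ that guarantees the mollification still sees only values of $\bar f_\rho$ where Hölder control — namely the extension $f_\rho$ — is available; granting this, all remaining manipulations are routine.
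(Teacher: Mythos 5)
Your proof is correct, and it reaches the same bound through a slightly different organization than the paper. The paper re-runs the Strang estimate of \Cref{thm:main_i} and absorbs the change of right-hand side into the consistency term: it inserts the extra difference $(\PiN f_\rho - \PiN f_{\rho,\eps}, w_h)_{L^2(\Rd)}$, bounds it by Cauchy--Schwarz on the coefficient vectors, and then uses exactly the pointwise estimate $\abs{f_{\rho,\eps}(x_k)-f_\rho(x_k)}\leq C\,[f_\rho]_{C^{1/2}}\,\eps^{1/2}$ that you prove. You instead black-box \Cref{thm:main_i} for the auxiliary discrete solution $u_h^\star$ and compare $u_h^\star$ with $u_h$ via discrete coercivity (from \Cref{lemma:poincare}) applied to the perturbation $g_h\in\V_h(\Omega)$, followed by the triangle inequality. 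The two routes are mathematically equivalent --- the consistency-term bound in the Strang lemma is precisely a stability estimate for the discrete problem --- but yours has the virtue of isolating the new ingredient cleanly as a stability-plus-data-perturbation statement, and it makes explicit the point the paper leaves implicit, namely that the containment $x_k+\supp\eta_\eps\subset B_\rho(x_k)\subset\Omega_\rho$ (which is why $\rho=\sqrt{d}\,h$ was chosen) is what allows the mollification at grid points of $\Omega$ to see only the H\"older extension $f_\rho$ and never the singular exterior datum $\Ns u_\rho$. One cosmetic remark: like the paper, you absorb the resulting $C\,\norm{f}_{C^{1/2}(\overline\Omega)}\,h^{1/2}$ term into the stated constant of the theorem without verifying that $\norm{f}_{C^{1/2}(\overline\Omega)}$ is controlled by the combination $\csob+\cbound$ appearing there; this imprecision is present in the original proof as well and does not affect the rate.
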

\begin{proof}
	The arguments are in essence the same as in the proof of \Cref{thm:main_i}. The additional
	term we have to bound in the consistency term is
	\begin{equation}
	\label{eq:bound_Pif_Pifeps}
		(\PiN f_\rho - \PiN f_{\rho,\eps}, w_h)_{L^2(\Rd)}
			\leq\left( N^{-d}\sum\limits_{k/N\in\Omega} (f_\rho(x_k) - f_{\rho,\eps}(x_k))^2\right)^{1/2}
				\cdot \Lt{w_h}
	\end{equation}
	First, note that we extend $f$ to $f_\rho\in C^{1/2}(\Omega_\rho)$ as before. Then,
	for $x_k\in\Omega$, 
	\begin{align*}
		\abs{f_{\rho,\eps}(k/N) - f_\rho(k/N)} &= \left|\int_{B_\eps(x)} \eta_\eps(x-y)(f_\rho(x) - f_\rho(y))\dy\right| \\
		&\leq \int_{B_\eps(x)} \eta_\eps(x-y)\dy \sup\limits_{y\in B_\eps(x)} |f_\rho(x) - f_\rho(y)| \\
		&\leq C \abs{x-y}^{1/2} \leq C \eps^{1/2}.
	\end{align*}
	Plugging this into \cref{eq:bound_Pif_Pifeps} and setting $\eps = h$ gives the
	desired result.
\end{proof}
If the domain has not only a Lipschitz boundary but a smooth boundary, we can obtain
error boundaries also without the assumption that $f$ is Hölder continuous.
\begin{theorem}
	Let $\Omega$ be a smooth domain that fulfills Assumption \ref{ass:dom} and
	let $f\in L^\infty(\Omega)$. Let $u$ solve the exterior value problem \cref{eq:bvp} on $\Omega$. 
	Let $u_h\in \V_h(\Omega)$ solve the discrete problem, see
	\cref{eq:formulation_collocation}, where the right-hand side is chosen as
	\[
		f_k = \bar f_{\rho,\eps}(x_k).
	\]
	Then,
	\[
		\norm{u-u_h}_{H^{s}(\Rd)} \leq C\cdot\Big(
			\csob(\Omega, f) + \cbound(\Omega, f) + 
			\csob(\Omega_\rho, f_\rho) + \cbound(\Omega_\rho, f_\rho) \Big)
			\cdot \frac{1}{\theta} h^{\alpha}.
	\]
	In case $s < \frac{1}{2}$, we have $\alpha = s$ and $C$ is a constant
	that depends on $\Omega, d$ and $s$.

	In case $s \geq \frac{1}{2}$, we have $\alpha = \frac{1}{2} - \theta$ for all 
	$\theta > 0$ and $C$ is a constant that depends on $\Omega, d, s$ and $\alpha$.
\end{theorem}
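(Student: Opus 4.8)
The plan is to run the proof of \Cref{thm:main_i} essentially verbatim, replacing the Lipschitz-domain regularity input \Cref{thm:sobolev_reg_u_lipschitz} by the smooth-domain result \Cref{thm:sobolev_reg_u_smooth} with $r=0$, and replacing \Cref{prop:bound_approximation_error} by \Cref{prop:bound_approximation_error_2}. First I would invoke the abstract Strang-type estimate \Cref{thm:abstract_error} with $H=H^s(\Rd)$, $V=\V$ and $V_h=\V_h(\Omega)$, coercivity of $a(\cdot,\cdot)$ on $\V_h(\Omega)$ coming from \Cref{lemma:poincare}; this bounds $\norm{u-u_h}_{H^s(\Rd)}$ by a fixed multiple of the approximation error $\inf_{v_h\in\V_h(\Omega)}\Hsnorm{u-v_h}$ plus the consistency error. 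Since $f\in L^\infty(\Omega)\subset L^2(\Omega)$ and $\Omega$ is smooth, \Cref{prop:bound_approximation_error_2} bounds the approximation error by $\capprox(\Omega,f,d,\alpha)\,h^\alpha$ with $\alpha=\min(s,\tfrac12-\theta)$, the dependence of the constant on $\theta^{-1}$ being relevant only in the regime $s\ge\tfrac12$ (inherited from \Cref{thm:sobolev_reg_u_smooth}); for $s<\tfrac12$ the exponent $\alpha=s$ is fixed and no $\theta$ is needed.

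For the consistency error I would reproduce the decomposition used in the proof of \Cref{thm:main_i} — note that the right-hand side $f_k=\bar f_{\rho,\eps}(x_k)$ is the extension-and-mollification choice, so no extra term arises as it does in \Cref{thm:main_ii}. Using $\flap u=\bar f$ and $\flap u_\rho=\bar f_\rho$ on $\Rd$ and inserting $u_\rho$ and $u_{\rho,\eps}$,
\[
	\abs{a(u-u_h,w_h)}\le\Big(\Hs{u-u_\rho}+\Hms{\bar f_\rho-\bar f_{\rho,\eps}}+\Hms{\bar f_{\rho,\eps}-\PiN\bar f_{\rho,\eps}}\Big)\,\norm{w_h}_{H^s(\Rd)}.
\]
The first term is handled by \Cref{thm:u_uR_Hs}, whose hypotheses ask only for $f_\rho\in L^\infty(\Omega_\rho)$; it gives $\Hs{u-u_\rho}\le Ch^{1/2}\le Ch^\alpha$ because $\alpha\le\tfrac12$. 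For the other two terms I would apply \Cref{lemma:sinc_interp_smoothing_f} with $\eps=h$ and with $u$ replaced by $u_\rho$: each is $\le h^{t-s}\Ht{u_\rho}$, and by \Cref{thm:sobolev_reg_u_smooth} with $r=0$ applied on $\Omega_\rho$ one has $\Ht{u_\rho}\le\csob(\Omega_\rho,f_\rho)$ with $t=\min(2s,s+\tfrac12-\theta)$, so that $t-s=\alpha$; \Cref{thm:bound_u_boundary} on $\Omega_\rho$ supplies the boundary-decay bound $\abs{u_\rho(x)}\le\cbound(\Omega_\rho,f_\rho)\dist(x,\Omega_\rho^c)^s$ needed to invoke \Cref{thm:u_uR_Hs}. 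Collecting these bounds, recalling $\rho=\sqrt d h$, and using \Cref{ass:dom} to keep $\csob(\Omega_\rho,f_\rho)$ and $\cbound(\Omega_\rho,f_\rho)$ controlled as $\rho\to0$, yields the asserted estimate, with the $\theta^{-1}$ factor present precisely when $s\ge\tfrac12$.

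The only genuine new point relative to \Cref{thm:main_i} — and the step I expect to require care — is that the enlarged domain $\Omega_\rho=\Omega+B_\rho$ must itself be a smooth domain satisfying an exterior ball condition, uniformly in $\rho$, in order to apply \Cref{thm:sobolev_reg_u_smooth} and \Cref{thm:bound_u_boundary} there. For $\rho$ below the (positive) reach of the smooth bounded set $\Omega$ this is classical — the level set $\{x:\dist(x,\Omega)=\rho\}$ is then a smooth hypersurface — so it holds for all sufficiently small $h$, which suffices for a statement about convergence as $h\to0$, and \Cref{ass:dom} then guarantees the required uniform boundedness of the regularity constants. A minor bookkeeping remark worth recording is that the domain-enlargement term $\Hs{u-u_\rho}$ always decays at the rate $h^{1/2}$, so in the regime $s<\tfrac12$ the overall rate $h^s$ is dictated entirely by the approximation error and by the mollification and interpolation terms, where $t-s=s$, with no logarithmic or $\theta^{-1}$ loss.
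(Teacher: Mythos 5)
Your proposal is correct and follows essentially the same route as the paper, which simply remarks that the proof of \Cref{thm:main_i} carries over once \Cref{prop:bound_approximation_error} is replaced by \Cref{prop:bound_approximation_error_2} and $f\in L^\infty(\Omega)$ is extended to $f_\rho\in L^\infty(\Omega_\rho)$. Your additional observations — that $t-s=\alpha$ when \Cref{thm:sobolev_reg_u_smooth} is applied with $r=0$ on $\Omega_\rho$, and that $\Omega_\rho$ must remain uniformly smooth for small $\rho$ — are exactly the bookkeeping the paper leaves implicit.
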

\begin{proof}
	The proof is essentially the same as for \Cref{thm:main_i,thm:main_ii} using 
	\Cref{prop:bound_approximation_error_2} and that we can extend $f\in L^\infty(\Omega)$
	to $f_\rho\in L^\infty(\Omega_\rho)$.
\end{proof}
\begin{remark}
	The main ingredients to prove the theorems above are Sobolev regularity and decay near the boundary of solutions $u$. Thus it is easily possible to derive similar results for specific cases where regularity and decay estimates are also available,  with sub-error decay rates depending on regularity and boundary decay rate. A possible application are convergence results on (not necessarily smooth) Lipschitz domains that fulfill the exterior ball condition for $f\in L^\infty(\Omega)\subset L^2(\Omega)$ using the results shown in \cite{Borthagaray2020,Borthagaray2021}
\end{remark}

\section*{Acknowledgement}
We thank Prof. Ricardo H. Nochetto of University of Maryland College Park for fruitful discussions while we were preparing this manuscript. LS and PD gratefully acknowledge the hospitality of George Mason University.

\appendix

\section{Construction of appropriate mollifiers}
\label{sub:construction_of_mollifiers}
The above estimates for the approximation of the right-hand side only give us the desired rate if we can bound the constant $C$ in
\cref{eq:constant_sup_Hms_err} appropriately. This is achieved
if the sum 
\begin{equation}
	\label{eq:sum_mollifier_suppl}
	\sum_{k\neq 0}\hat\mollif\left(2\pi(k + \omega)\right)^2|(k+\omega)|^{2s}
\end{equation}
(i) is finite and
(ii) sufficiently small around ${\omega = 0}$.
We achieve this as follows. We want to have a mollifier $\mollif(x)$ such that
\begin{equation}
	\label{eq:def_sinc_prod_mollif}
	\hat\mollif(\omega) = \hat \psi(\omega)\cdot S(\omega)
\end{equation}
where
\begin{equation}
	\label{eq:def_sinc_prod_mollif_2}
	S(\omega) 
        = \prod\limits_{i=1}^d \sinc(\omega_i/(2\pi))
\end{equation}
and $\hat\psi(\omega)\rightarrow 0$ as $\abs{\omega}\rightarrow \infty$ fast enough 
to ensure that the sum in \cref{eq:sum_mollifier_suppl} remains bounded.
Precisely, the mollifier $\mollif$ is obtained as the convolution of
$\psi$ and $\check S \coloneqq\IFT S$, which gives the structure of the
mollifier, see \cref{eq:def_sinc_prod_mollif}, through the convolution theorem
for the Fourier transformation. Set
\begin{equation}
	\check S = \chi_{(-1/2,1/2)^d}(x) = \begin{cases}
		1&\text{if } x\in (-1/2,1/2)^d\\
		0&\text{otherwise}
	\end{cases},
\end{equation}
the indicator function on the cube from $-1/2$ to $1/2$ in each spatial direction. Note that for this definition,
$S = \FT{\check S}$ is exactly fulfilled.
It is clear that $\supp{\check S}\subset B_{\sqrt d/2}$. Thus clearly, if we choose
$\psi$ to be a smooth mollifier and scale its support to a circle of radius $\sqrt{d}/4$,
we have $\supp{\check S * \psi}\subset B_{\sqrt{d}}$.
Further, we have to normalize $\psi$ such that $\int_\Rd \left(\psi * \check S\right) \dx = 1$.
As we assume $\psi\subset C_c^\infty(\Rd)$, we can assume $\hat\psi(\omega) \leq C\exp(-\omega)$.
The so constructed mollifier in $d = 2$ is shown in \cref{fig:constr_mollifier_2d}.

The second factor in \cref{eq:def_sinc_prod_mollif} ensures that the sum in \cref{eq:constant_sup_Hms_err} is $0$ if $\omega = 0$,
the first ensures that it remains bounded. To show this,
note that for the mollifier as defined in equations \cref{eq:def_sinc_prod_mollif,eq:def_sinc_prod_mollif_2},
it holds that
\begin{align}
	\sum\limits_{k\neq 0} \hat \mollif(2\pi (k + \omega))^2|k+\omega|^{2s}
		&= \sum\limits_{k\neq 0} \left(S(2\pi(k+\omega)) \hat\psi(2\pi (k + \omega))  \right)^2|k+\omega|^{2s} \nonumber\\
		&= \sum\limits_{k\neq 0}
			\left(\prod\limits_{i=1}^d \frac{\sin(\pi(k_i+\omega_i))}{\pi(k_i+\omega_i)}\right)^2
			\hat\psi^2(2\pi (k + \omega))  |k+\omega|^{2s} \nonumber\\
		&=  \prod\limits_{i=1}^d  \left(\frac{\sin(\pi\omega_i)}{\pi}\right)^2 \sum\limits_{k\neq 0}
			\prod\limits_{i=1}^d \frac{1}{(k_i+\omega_i)^2}
			\hat\psi^2(2\pi (k + \omega))  |k+\omega|^{2s} \nonumber\\
		&\leq C	\prod\limits_{i=1}^d \left(\frac{\sin(\pi\omega_i)}{\pi}\right)^2  \sum\limits_{k\neq 0}
			 \frac{ \exp(-4\pi \abs{k+\omega}) \cdot |k+\omega|^{2s}}{\prod_{i=1}^d(k_i+\omega_i)^2} \nonumber\\
		&\leq C \prod\limits_{i=1}^d \omega_i^2  \underbrace{\sum\limits_{k\neq 0}
			 \frac{ \exp(-4\pi \abs{k+\omega}) \cdot |k+\omega|^{2s}}{\prod_{i=1}^d(k_i+\omega_i)^2}}_{\leq\text{const.}} \nonumber\\
		&\leq C \prod\limits_{i=1}^d \omega_i^2 \leq C \abs{\omega}^{2d}
\end{align}
where $C$ is a varying constant that depends on the decay of $\hat\psi$, $s$ and $d$.
For the last inequality, we used the generalized mean inequalities. 
We plug this into \cref{eq:constant_sup_Hms_err} and obtain
\begin{equation}
	\frac{(2N\pi)^{2s}}{w(2N\pi\omega)^s}
			\sum\limits_{k\neq 0}\hat\mollif\left(2\pi(k + \omega)\right)^2|(k+\omega)|^{2s}
			\leq C \frac{1}{\left(1/(2N\pi)^2+\abs{\omega}^2\right)^{s}} \abs{\omega}^{2d}
			\leq \abs{\omega}^{2d-2s} < \infty
\end{equation}
if $s < d$.
\begin{figure}
	\centering
	\includegraphics{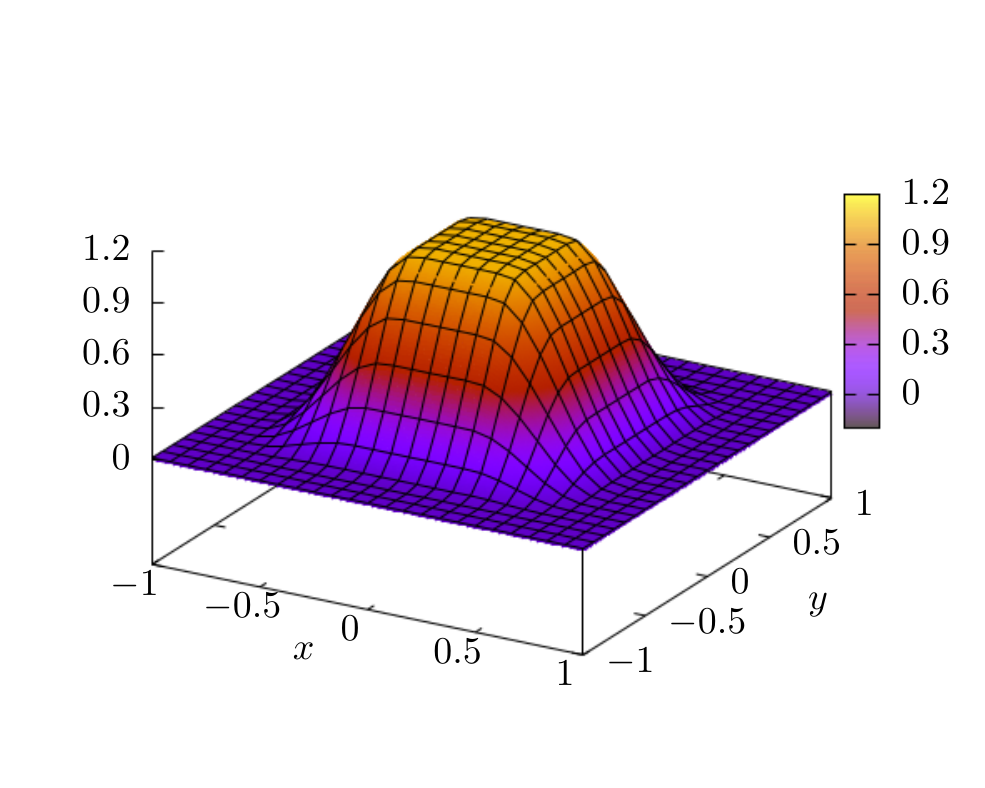}
	\caption{The mollifier constructed as $\eta(x) = (S*\psi)(x)$ and normalized s.t. $\int_\Rd \eta\dx = 0$ in $d = 2$.}
	\label{fig:constr_mollifier_2d}
\end{figure}

\bibliographystyle{plain}	
\bibliography{bibliography}

\begin{thebibliography}{10}

\bibitem{Abels2023}
Helmut Abels and Gerd Grubb.
\newblock Fractional-order operators on nonsmooth domains.
\newblock {\em Journal of the London Mathematical Society}, 107(4):1297--1350,
  January 2023.

\bibitem{Borthagaray2017}
Gabriel Acosta and Juan~Pablo Borthagaray.
\newblock A fractional {L}aplace equation: Regularity of solutions and finite
  element approximations.
\newblock {\em SIAM Journal on Numerical Analysis}, 55(2):472–495, Jan 2017.

\bibitem{Ainsworth2017}
Mark Ainsworth and Christian Glusa.
\newblock Aspects of an adaptive finite element method for the fractional
  {L}aplacian: A priori and a posteriori error estimates, efficient
  implementation and multigrid solver.
\newblock {\em Computer Methods in Applied Mechanics and Engineering},
  327:4--35, 2017.
\newblock Advances in Computational Mechanics and Scientific Computation—the
  Cutting Edge.

\bibitem{Ainsworth2018}
Mark Ainsworth and Christian Glusa.
\newblock Towards an efficient finite element method for the integral
  fractional {L}aplacian on polygonal domains.
\newblock In Josef Dick, Frances~Y. Kuo, and Henryk Wo{\'{z}}niakowski,
  editors, {\em Contemporary Computational Mathematics - A Celebration of the
  80th Birthday of Ian Sloan}, pages 17--57. Springer International Publishing,
  Cham, 2018.

\bibitem{ADS2021}
Harbir Antil, Patrick Dondl, and Ludwig Striet.
\newblock Approximation of integral fractional {L}aplacian and fractional
  {PDE}s via sinc-basis.
\newblock {\em SIAM Journal on Scientific Computing}, 43(4):A2897--A2922, 2021.

\bibitem{Belegradek2017}
Igor Belegradek and Zixin Jiang.
\newblock Smoothness of minkowski sum and generic rotations.
\newblock {\em Journal of Mathematical Analysis and Applications},
  450(2):1229--1244, 2017.

\bibitem{Bonito2018}
Andrea Bonito, Juan~Pablo Borthagaray, Ricardo~H. Nochetto, Enrique
  Ot{\'{a}}rola, and Abner~J. Salgado.
\newblock Numerical methods for fractional diffusion.
\newblock {\em Computing and Visualization in Science}, 19(5-6):19--46, mar
  2018.

\bibitem{Bonito2019}
Andrea Bonito, Wenyu Lei, and Joseph~E. Pasciak.
\newblock Numerical approximation of the integral fractional {L}aplacian.
\newblock {\em Numerische Mathematik}, 142(2):235--278, February 2019.

\bibitem{Borthagaray2020}
Juan~Pablo Borthagaray, Dmitriy Leykekhman, and Ricardo~H. Nochetto.
\newblock Local energy estimates for the fractional {L}aplacian.
\newblock {\em SIAM Journal on Numerical Analysis}, 59(4):1918--1947, 2021.

\bibitem{Borthagaray2021}
Juan~Pablo Borthagaray and Ricardo~H. Nochetto.
\newblock {B}esov regularity for the {D}irichlet integral fractional
  {L}aplacian in {L}ipschitz domains, 2021.

\bibitem{Borthagaray2018}
Juan~Pablo Borthagaray, Leandro M.~Del Pezzo, and Sandra Martinez.
\newblock Finite element approximation for the fractional eigenvalue problem.
\newblock {\em Journal of Scientific Computing}, 77(1):308--329, April 2018.

\bibitem{Brenner2007}
S.~Brenner and R.~Scott.
\newblock {\em The Mathematical Theory of Finite Element Methods}.
\newblock Texts in Applied Mathematics. Springer New York, 2007.

\bibitem{Covi2020}
Giovanni {Covi}, Keijo {M{\"o}nkk{\"o}nen}, and Jesse {Railo}.
\newblock Unique continuation property and {P}oincar{\'e} inequality for higher
  order fractional {L}aplacians with applications in inverse problems.
\newblock {\em arXiv e-prints}, page arXiv:2001.06210, January 2020.

\bibitem{Dinezza2012}
Eleonora {Di Nezza}, Giampiero Palatucci, and Enrico Valdinoci.
\newblock Hitchhikers guide to the fractional sobolev spaces.
\newblock {\em Bulletin des Sciences Mathématiques}, 136(5):521--573, 2012.

\bibitem{Dipierro2017}
Serena Dipierro, Xavier Ros-Oton, and Enrico Valdinoci.
\newblock Nonlocal problems with {N}eumann boundary conditions.
\newblock {\em Revista Matematica Iberoamericana}, 33:377+, Jun 2017.
\newblock 2.

\bibitem{Duo2018}
Siwei Duo and Yanzhi Zhang.
\newblock Accurate numerical methods for two and three dimensional integral
  fractional {L}aplacian with applications.
\newblock {\em Computer Methods in Applied Mechanics and Engineering}, 355:639
  -- 662, 2019.

\bibitem{Delia2013}
Marta D’Elia and Max Gunzburger.
\newblock The fractional laplacian operator on bounded domains as a special
  case of the nonlocal diffusion operator.
\newblock {\em Computers \& Mathematics with Applications}, 66(7):1245--1260,
  2013.

\bibitem{Ern2021}
Alexandre Ern and Jean-Luc Guermond.
\newblock {\em Finite Elements {II}}.
\newblock Springer International Publishing, 2021.

\bibitem{Gilbarg2001}
David Gilbarg and Neil~S. Trudinger.
\newblock {\em Elliptic Partial Differential Equations of Second Order}.
\newblock Springer Berlin Heidelberg, 2001.

\bibitem{Grafakos2010}
Loukas Grafakos.
\newblock {\em Classical Fourier Analysis}.
\newblock Graduate Texts in Mathematics. Springer, New York, NY, October 2010.

\bibitem{Han2022}
Rubing Han and Shuonan Wu.
\newblock A monotone discretization for integral fractional laplacian on
  bounded lipschitz domains: Pointwise error estimates under hölder
  regularity.
\newblock {\em SIAM Journal on Numerical Analysis}, 60(6):3052--3077, 2022.

\bibitem{Hao2021}
Zhaopeng Hao, Zhongqiang Zhang, and Rui Du.
\newblock Fractional centered difference scheme for high-dimensional integral
  fractional laplacian.
\newblock {\em Journal of Computational Physics}, 424:109851, 2021.

\bibitem{Heltai2020}
Luca Heltai and Wenyu Lei.
\newblock A priori error estimates of regularized elliptic problems.
\newblock {\em Numerische Mathematik}, 146(3):571--596, September 2020.

\bibitem{Huang2014}
Yanghong Huang and Adam Oberman.
\newblock Numerical methods for the fractional laplacian: A finite
  difference-quadrature approach.
\newblock {\em SIAM Journal on Numerical Analysis}, 52(6):3056--3084, 2014.

\bibitem{Huang2016}
Yanghong {Huang} and Adam {Oberman}.
\newblock Finite difference methods for fractional {L}aplacians.
\newblock {\em arXiv e-prints}, page arXiv:1611.00164, November 2016.

\bibitem{Karkulik2019}
Michael Karkulik and Jens~Markus Melenk.
\newblock
  {\textdollar}$\lbrace${\textbackslash}mathscr$\lbrace$h$\rbrace$$\rbrace${\textdollar}-matrix
  approximability of inverses of discretizations of the fractional laplacian.
\newblock {\em Advances in Computational Mathematics}, 45(5-6):2893--2919,
  November 2019.

\bibitem{Kwasnicki2015}
Mateusz Kwaśnicki.
\newblock Ten equivalent definitions of the fractional {L}aplace operator.
\newblock {\em Fractional Calculus and Applied Analysis}, 20, 07 2015.

\bibitem{Minden2020}
Victor Minden and Lexing Ying.
\newblock A simple solver for the fractional {L}aplacian in multiple
  dimensions.
\newblock {\em SIAM Journal on Scientific Computing}, 42(2):A878--A900, 2020.

\bibitem{Nguyen2017}
Ha~Q Nguyen, Michael Unser, and John~Paul Ward.
\newblock Generalized poisson summation formulas for continuous functions of
  polynomial growth.
\newblock {\em Journal of Fourier Analysis and Applications}, 23:442--461,
  2017.

\bibitem{RosOton2014}
Xavier Ros-Oton and Joaquim Serra.
\newblock The {D}irichlet problem for the fractional {L}aplacian: Regularity up
  to the boundary.
\newblock {\em Journal de Mathématiques Pures et Appliquées}, 101(3):275 --
  302, 2014.

\bibitem{Stenger2011}
F.~Stenger.
\newblock {\em Handbook of Sinc Numerical Methods}.
\newblock Chapman \& Hall/CRC Numerical Analysis and Scientific Computing
  Series. CRC Press, 2011.

\end{thebibliography}

\end{document}